\newtheorem{lemma}{Lemma}[section]
\newtheorem{theorem}[lemma]{Theorem}
\newtheorem{remark}[lemma]{Remark}
\newtheorem{coro}[lemma]{Corollary}
\newtheorem{definition}[lemma]{Definition}
\newtheorem{example}[lemma]{Example}
\title[Averaging Principle on Semi-axis for Semi-linear
\ldots Equations]{Averaging Principle on Semi-axis for Semi-linear
Differential Equations}
\author{David ~Cheban}
\address[D. Cheban]{%
State University of Moldova\\ Department of Mathematics\\ Laboratory of Fundamental and Applied Mathematics\\
A. Mateevich Street 60\\ MD--2009 Chi\c{s}in\u{a}u, Moldova}
\email[D. Cheban]{david.ceban@usm.md, davidcheban@yahoo.com}
\date{\today}
\subjclass[2010]{34C12, 34C27, 34D20, 37B05, 37B20, 37B55}
\keywords{Averaging Principle; Partial Differential Equations}
\date{\today}
\subjclass{primary:34C29, 34G10, 34G25, 35B15, 35B40.}
\keywords{Shift dynamical system, hyperbolic sectorial operator,
semi-linear equations,bounded solutions, asymptotically
Bohr/Levitan almost periodic solutions, averaging principle on
semi-axis.}
\begin{document}
%%%%%%%%%%%%%%%%%%%%%%%%%%%%%%
%%%%%%%%%%%%%%%%%%%%%%%%%%%%%%
%%%%%%%%%%
\begin{abstract}
{We establish an averaging principle on the real semi-axis for
semi-linear equation
\begin{equation}\label{eqAb1}
x'=\varepsilon (\mathcal A x+f(t)+F(t,x))\nonumber
\end{equation}
with unbounded closed linear operator $\mathcal A$ and
asymptotically Poisson stable (in particular, asymptotically
stationary, asymptotically periodic, asymptotically
quasi-periodic, asymptotically almost periodic, asymptotically
almost automorphic, asymptotically recurrent) coefficients. Under
some conditions we prove that there exists at least one solution,
which possesses the same asymptotically recurrence property as the
coefficients, in a small neighborhood of the stationary solution
to the averaged equation, and this solution converges to the
stationary solution of averaged equation uniformly on the real
semi-axis when the small parameter approaches to zero.}
\end{abstract}

\maketitle

\section{Introduction}\label{S1}

In the present paper, we establish an averaging principle on the
real semi-axis, i.e. the analogue of Bogolyubov's second theorem,
for semi-linear differential equations
\begin{equation}\label{eqI1}
x'=\varepsilon (Ax+f(t)+F(t,x)) .
\end{equation}
If there exists a stationary solution $\bar{\psi}$ for the
averaged equation
\begin{equation}\label{eqI2}
x'= Ax+\bar{f}+\bar{F}(x) ,\nonumber
\end{equation}
then there exists in a small neighborhood $\bar{\psi}$ a solution
of the original equation which is defined on the semi-axis
$\mathbb R_{+}$ and has the same recurrence property in limit as
the coefficients of the original equation. Note that this research
was stimulated by works \cite{Che_1988} and \cite{CL_2021}.

To be more precise, we investigate the semi-linear differential
equation (\ref{eqI1}) with asymptotically Poisson stable (in
particular, asymptotically stationary, asymptotically periodic,
asymptotically quasi-periodic, asymptotically Bohr almost
periodic, asymptotically almost automorphic, asymptotically
Birkhoff recurrent, asymptotically Levitan almost periodic,
asymptotically almost recurrent, asymptotically Poisson stable) in
time coefficients. Under some conditions, this equation has a
unique bounded on $\mathbb R_{+}$ solution $\psi_{\varepsilon}$
with $P_{-}\psi_{\varepsilon}(0)=0$ which has the same
asymptotically recurrent properties as the coefficients, see
\cite{Che_2009} for details.

The paper is organized as follows. In the second section we
collect some known notions and facts. Namely we present the
construction of shift dynamical systems, definitions and basic
properties of Poisson stable and asymptotically Poisson stable
functions, comparability method for asymptotically Poisson stable
motions, and the existence of compatible in the limit of solutions
for semi-linear differential equations. In the third and fourth
sections, we study the problem of existence of bounded solutions
on the semi-axis for linear and semi-linear differential
equations. Section fifth is dedicated to the study a special
classes of sectorial operators. Namely we establish some
properties of hyperbolic sectorial operators. Finally, in the
sixth section the problem of averaging on semi-axis for
semi-linear differential equations are studied.

\section{Preliminaries}\label{S2}

Let $(X,\rho)$ be a complete metric space. Let $\mathbb R$
(respectively, $\mathbb R_{+}$) be the set of all real
(respectively, real non-negative) numbers and $\mathbb T =\mathbb
R$ or $\mathbb R_{+}$. Denote by $C(\mathbb T,X)$ the space of all
continuous functions $\varphi :\mathbb T \to X$ equipped with the
compact-open topology. This topology can be defined by the
following distance
\begin{equation*}\label{eqD_1}
d(\varphi,\psi):=\sup\limits_{L>0}\min\{\max\limits_{|t|\le
L}\rho(\varphi(t),\psi(t)),L^{-1}\}.
\end{equation*}
The space $(C(\mathbb T,X),d)$ is a complete metric space (see,
for example, \cite[ChI]{Sch72},\cite{Sch85,sib}).

\begin{lemma}\label{l1} {\rm(\cite[ChI]{Sch72},\cite{Sch85,sib})} The following statements
hold:
\begin{enumerate}
\item $d(\varphi,\psi) = \varepsilon$ if and only if
$$
\max\limits_{|t|\le
\varepsilon^{-1}}\rho(\varphi(t),\psi(t))=\varepsilon ;
$$
\item $d(\varphi,\psi)<\varepsilon$ if and only if
$$
\max\limits_{|t|\le
\varepsilon^{-1}}\rho(\varphi(t),\psi(t))<\varepsilon ;
$$
\item $d(\varphi,\psi)>\varepsilon$ if and only if
$$
\max\limits_{|t|\le
\varepsilon^{-1}}\rho(\varphi(t),\psi(t))>\varepsilon .
$$
\end{enumerate}
\end{lemma}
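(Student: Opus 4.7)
The plan is to reduce the three claims to the behaviour of the single-variable function $g(L):=\min\{M(L),L^{-1}\}$, where $M(L):=\max_{|t|\le L}\rho(\varphi(t),\psi(t))$, so that by definition $d(\varphi,\psi)=\sup_{L>0}g(L)$. Since $\varphi$ and $\psi$ are continuous, $M$ is continuous, non-negative and non-decreasing in $L$ (with modulus of continuity controlled by that of $\rho(\varphi(\cdot),\psi(\cdot))$ on compact intervals), while $L^{-1}$ is continuous and strictly decreasing. Hence $g$ is continuous and essentially unimodal: it coincides with the non-decreasing $M$ on an initial interval and with the decreasing $L^{-1}$ on a terminal interval.

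My plan is to prove (ii) and (iii) first and then deduce (i). For (ii), assume $M(\varepsilon^{-1})<\varepsilon$. By continuity of $M$, I would pick $\delta>0$ with $M(\varepsilon^{-1}+\delta)<\varepsilon$ and $(\varepsilon^{-1}+\delta)^{-1}<\varepsilon$; then $g(L)\le M(L)\le M(\varepsilon^{-1}+\delta)<\varepsilon$ for $L\le\varepsilon^{-1}+\delta$, while $g(L)\le L^{-1}\le (\varepsilon^{-1}+\delta)^{-1}<\varepsilon$ for $L\ge\varepsilon^{-1}+\delta$, so taking the supremum yields $d(\varphi,\psi)<\varepsilon$. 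The converse is immediate by evaluating at $L=\varepsilon^{-1}$: if $d(\varphi,\psi)<\varepsilon$ then $g(\varepsilon^{-1})=\min\{M(\varepsilon^{-1}),\varepsilon\}<\varepsilon$, forcing $M(\varepsilon^{-1})<\varepsilon$.

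For (iii), if $M(\varepsilon^{-1})>\varepsilon$, continuity of $M$ yields some $L<\varepsilon^{-1}$ with $M(L)>\varepsilon$, whence also $L^{-1}>\varepsilon$ and $g(L)>\varepsilon$, so $d(\varphi,\psi)>\varepsilon$. Conversely, $d(\varphi,\psi)>\varepsilon$ produces some $L$ with $g(L)>\varepsilon$, forcing $L^{-1}>\varepsilon$ (so $L<\varepsilon^{-1}$) and $M(L)>\varepsilon$, whence $M(\varepsilon^{-1})\ge M(L)>\varepsilon$ by monotonicity. Part (i) is then a formal consequence: $d(\varphi,\psi)=\varepsilon$ holds precisely when neither $d<\varepsilon$ nor $d>\varepsilon$, which by (ii) and (iii) translates into $M(\varepsilon^{-1})$ being neither $<\varepsilon$ nor $>\varepsilon$, i.e.\ $=\varepsilon$.

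The only delicate point --- and the main place to slow down --- is turning a pointwise strict inequality into a strict supremum bound in (ii). The naive observation $g(\varepsilon^{-1})<\varepsilon$ alone gives only $\sup g\le\varepsilon$, so one really must exploit the continuity of $M$ to produce the $\delta$-buffer on which both $M(L)$ and $L^{-1}$ stay uniformly below $\varepsilon$ on their respective half-lines.
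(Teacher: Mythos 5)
Your argument is correct and complete. The paper itself offers no proof of Lemma~\ref{l1}, merely citing Shcherbakov and Sibirsky, so there is nothing internal to compare against; your reduction to the function $g(L)=\min\{M(L),L^{-1}\}$ with $M$ continuous and non-decreasing and $L^{-1}$ strictly decreasing is the standard route, and you correctly identify and handle the one delicate point, namely that the strict bound in (ii) requires a $\delta$-buffer beyond $L=\varepsilon^{-1}$ (obtained from right-continuity of $M$) rather than just the pointwise inequality $g(\varepsilon^{-1})<\varepsilon$, which would only yield $d\le\varepsilon$. The converse directions and the derivation of (i) from (ii) and (iii) by trichotomy are all sound.
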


\begin{remark}\label{remD1} \rm
1. The distance $d$ generates on $C(\mathbb T,X)$ the compact-open
topology.

2. The following statements are equivalent:
\begin{enumerate}
\item $d(\varphi_n,\varphi)\to 0$ as $ n\to \infty$; \item
$\lim\limits_{n\to \infty}\max\limits_{|t|\le
L}\rho(\varphi_n(t),\varphi(t))=0$ for every $L>0$; \item there
exists a sequence $l_n\to +\infty$ such that $\lim\limits_{n\to
\infty}\max\limits_{|t|\le l_n}\rho(\varphi_n(t),\varphi(t))=0$.
\end{enumerate}
\end{remark}

Let us recall the types of Poisson stable functions
\cite{Sch72,Sch85,Sel,sib} (see also \cite[Ch.I]{Che_2020}).

\begin{definition} \rm
A function $\varphi\in C(\mathbb T,X)$ is called stationary
(respectively, $\tau$-periodic) if $\varphi(t)=\varphi(0)$
(respectively, $\varphi(t+\tau)=\varphi(t)$) for all $t\in \mathbb
T$.
\end{definition}

\begin{definition} \rm
Let $\varepsilon >0$. A number $\tau \in \mathbb T$ is called
$\varepsilon$-almost period of $\varphi$ if
$\rho(\varphi(t+\tau),\varphi(t))<\varepsilon$ for all
$t\in\mathbb T$. Denote by $\mathcal T(\varphi,\varepsilon)$ the
set of $\varepsilon$-almost periods of $\varphi$.
\end{definition}

Recall that a subset $A\subset \mathbb T$ is said to be relatively
dense if for each $\varepsilon >0$ there exists $l\in \mathbb T,\
l=l(\varepsilon)>0$ such that
$$
\mathcal
T(\varphi,\varepsilon)\cap [a,a+l]\not=\emptyset
$$
for all
$a\in\mathbb T$, where $[a,a+l]:=\{t\in \mathbb T|\ a\le t\le
a+l\}$.

\begin{definition} \rm
A function $\varphi \in C(\mathbb T,X)$ is said to be Bohr almost
periodic if the set of $\varepsilon$-almost periods of $\varphi$
is relatively dense for each $\varepsilon >0$.
\end{definition}

%\begin{definition} \label{ppf}\rm %(\cite[p.32]{Bohr_I1947})
%A function $\varphi \in C(\mathbb R,X)$ is said to be {\em
%pseudo-periodic} in the positive (respectively, negative)
%direction if for each $\varepsilon >0$ and $l>0$ there exists a
%$\varepsilon$-almost period $\tau >l$ (respectively, $\tau <-l$)
%of the function $\varphi$. The function $\varphi$ is called
%pseudo-periodic if it is pseudo-periodic in both directions.
%\end{definition}

\begin{definition}\rm
For given $\varphi\in C(\mathbb T, X)$ and $h\in \mathbb T$,
denote by $\varphi^h$ the $h$-translation of $\varphi$, i.e.,
$\varphi^h(t)=\varphi(h+t)$ for $t\in\mathbb T$. Denote by
$H(\varphi)$ the hull of $\varphi$, i.e.,
\[
H(\varphi):=\{\psi\in C(\mathbb R, X): \psi=\lim_{n\to\infty}
\varphi^{h_n} \hbox{ for some sequence } \{h_n\} \subset \mathbb
R\}.
\]
\end{definition}

It is well-known (see, e.g. \cite[Ch.I]{Che_2015}) that the
mapping $\sigma: \mathbb R\times C(\mathbb T, X)\to C(\mathbb T,
X)$ defined by $\sigma(h,\varphi) =\varphi^h$ possesses the
following properties:
\begin{enumerate}
\item $\sigma(0,\varphi)=\varphi $; \item
$\sigma(h_1+h_2,\varphi)=\sigma(h_2,\sigma(h_1,\varphi))$; \item
the mapping $\sigma$ is continuous.
\end{enumerate}
Thus the triplet $(C(\mathbb T,X),\mathbb T,\sigma)$ is a
dynamical system (shift or Bebutov's dynamical system) on the
space $C(\mathbb T,X)$.

\begin{definition}\label{defLS1} A function $\varphi \in C(\mathbb
T,X)$is said to be Lagrange stable if
$\Sigma_{\varphi}:=\{\sigma(\tau,\varphi)|\ \tau \in \mathbb T\}$
is pre-compact in the space $C(\mathbb T,X)$.
\end{definition}

\begin{lemma}\label{lLS1} (\cite[Ch.IV]{Bro79},
\cite[Ch.I]{Sch72}, \cite{SK_1974}) The function $\varphi \in
C(\mathbb T,X)$is Lagrange stable if and only if the following
conditions are fulfilled:
\begin{enumerate}
\item $\varphi(\mathbb T)$ is pre-compact in $X$; \item the
mapping $\varphi :\mathbb T\to X$ is uniformly continuous.
\end{enumerate}
\end{lemma}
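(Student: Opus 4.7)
The plan is to prove the two implications separately, using the concrete description of the metric $d$ in Lemma \ref{l1} together with a standard Arzel\`a--Ascoli argument adapted to the compact-open topology.

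For the forward direction, I would first verify that the evaluation $\mathrm{ev}_0:(C(\mathbb T,X),d)\to X$ sending $\psi\mapsto\psi(0)$ is continuous, which follows immediately from Lemma \ref{l1}(ii): for any $\varepsilon<1$, the inequality $d(\psi_n,\psi)<\varepsilon$ forces $\rho(\psi_n(0),\psi(0))<\varepsilon$ since $0\in[-\varepsilon^{-1},\varepsilon^{-1}]$. Writing $\varphi(\tau)=\mathrm{ev}_0(\sigma(\tau,\varphi))$ then exhibits $\varphi(\mathbb T)$ as the continuous image of the pre-compact orbit $\Sigma_\varphi$, establishing (i). For (ii) I would argue by contradiction: suppose there exist $\varepsilon_0>0$ and sequences $t_n,s_n\in\mathbb T$ with $|t_n-s_n|\to 0$ yet $\rho(\varphi(t_n),\varphi(s_n))\ge\varepsilon_0$. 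By pre-compactness of $\Sigma_\varphi$ I can extract a subsequence along which $\sigma(t_n,\varphi)\to\psi$ in $d$, hence uniformly on each compact interval by Remark \ref{remD1}. Evaluating at $0$ gives $\varphi(t_n)\to\psi(0)$, and evaluating at $s_n-t_n$ (which eventually lies in $[-1,1]$), together with continuity of $\psi$ at $0$, gives $\varphi(s_n)=\sigma(t_n,\varphi)(s_n-t_n)\to\psi(0)$ as well, contradicting the $\varepsilon_0$-separation.

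For the converse I would show that every sequence in $\Sigma_\varphi$ admits a $d$-convergent subsequence. Uniform continuity of $\varphi$ propagates to all translates: for each $\delta>0$ one can pick $\eta>0$ with $\rho(\varphi(\tau+u),\varphi(\tau+v))<\delta$ whenever $|u-v|<\eta$, uniformly in $\tau$, so $\Sigma_\varphi$ is uniformly equicontinuous. Hypothesis (i) supplies pointwise pre-compactness since $\sigma(\tau,\varphi)(t)\in\varphi(\mathbb T)$ for every $t,\tau$. Applying the classical Arzel\`a--Ascoli theorem to the restrictions to each $[-L,L]\cap\mathbb T$ and diagonalising over $L=1,2,\ldots$ extracts from any sequence $\{\sigma(\tau_n,\varphi)\}$ a subsequence that converges uniformly on every compact subset of $\mathbb T$, which by Remark \ref{remD1} is exactly convergence in $(C(\mathbb T,X),d)$.

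The main obstacle, and really the only substantive point, is the diagonal step bridging classical Arzel\`a--Ascoli on compact intervals to pre-compactness in the global metric $d$. Everything else is a direct translation between the quantitative statements about $d$ in Lemma \ref{l1} and the uniform-on-compacts characterisation in Remark \ref{remD1}, and between the shift action $\sigma(\tau,\varphi)(t)=\varphi(\tau+t)$ and the corresponding uniform bounds for $\varphi$ itself.
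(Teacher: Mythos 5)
Your argument is correct and complete. Note that the paper itself offers no proof of Lemma \ref{lLS1}: it is quoted from the literature (Bronsteyn; Shcherbakov; Shcherbakov--Koreneva), so there is nothing internal to compare against. What you give is the standard proof: continuity of the evaluation $\mathrm{ev}_0$ plus pre-compactness of the orbit yields pre-compactness of the range; the contradiction argument with a convergent subsequence of translates yields uniform continuity; and conversely uniform equicontinuity of the translates together with pointwise pre-compactness feeds into Arzel\`a--Ascoli on $[-L,L]\cap\mathbb T$ followed by a diagonal extraction, which by Remark \ref{remD1} is exactly convergence in $d$. The only point to tidy is in the uniform-continuity step when $\mathbb T=\mathbb R_{+}$: if $s_n<t_n$ then $\sigma(t_n,\varphi)(s_n-t_n)$ is not defined, so you should first pass to a subsequence on which one ordering holds and relabel so that $s_n\ge t_n$ (equivalently, translate by $\min(t_n,s_n)$); this is cosmetic and does not affect the argument.
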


\begin{definition} \rm
A number $\tau\in\mathbb T$ is said to be $\varepsilon$-shift for
$\varphi \in C(\mathbb T,X)$ if
$d(\varphi^{\tau},\varphi)<\varepsilon$.
\end{definition}

\begin{definition} \rm
A function $\varphi \in C(\mathbb T,X)$ is called {\em almost
recurrent} if for every $\varepsilon >0$ the set $\{\tau :\
d(\varphi^{\tau},\varphi)<\varepsilon\}$ is relatively dense.
\end{definition}

\begin{definition} \rm
A function $\varphi \in C(\mathbb T,X)$ is called recurrent if it
is almost recurrent and Lagrange stable.
\end{definition}

%\begin{definition} \rm
%A function $\varphi \in C(\mathbb T,X)$ is called Poisson stable
%in the positive (respectively, negative if $\mathbb T=\mathbb R$)
%direction if for every $\varepsilon >0$ and $l>0$ there exists
%$\tau >l$ (respectively, $\tau <-l$) such that
%$d(\varphi^{\tau},\varphi)<\varepsilon$. The function $\varphi \in
%C(\mathbb R,X)$ is called Poisson stable if it is Poisson stable
%in both directions.
%\end{definition}

Let $Y$ be a complete metric space.

\begin{definition} \rm
%Let $\varphi \in C(\mathbb R,X)$ and $\psi \in C(\mathbb R,Y)$.
A function $\varphi\in C(\mathbb T,X)$ is called Levitan almost
periodic if there exists a Bohr almost periodic function $\psi \in
C(\mathbb T,Y)$ such that for any $\varepsilon >0$ there exists
$\delta =\delta (\varepsilon)>0$ such that
$d(\varphi^{\tau},\varphi)<\varepsilon$ for all $\tau \in \mathcal
T(\psi,\delta)$, recalling that $\mathcal T(\psi,\delta)$ denotes
the set of $\delta$-almost periods of $\psi$.
\end{definition}

\begin{remark} \rm
\begin{enumerate}
\item  Every Bohr almost periodic function is Levitan almost
periodic.

\item  The function $\varphi \in C(\mathbb T,\mathbb R)$ defined
by equality $\varphi(t)=\dfrac{1}{2+\cos t +\cos \sqrt{2}t}$ is
Levitan almost periodic, but it is not Bohr almost periodic
\cite[ChIV]{Lev-Zhi}.
\end{enumerate}
\end{remark}

\begin{definition}\label{defAA01} \rm
A function $\varphi \in C(\mathbb R,X)$ is said to be almost
automorphic if it is Levitan almost periodic and Lagrange stable.
\end{definition}

\begin{definition} \rm
A function $\varphi \in C(\mathbb T,X)$ is called quasi-periodic
with the spectrum of frequencies $\nu_1,\nu_2,\ldots,\nu_k$ if the
following conditions are fulfilled:
\begin{enumerate}
\item the numbers $\nu_1,\nu_2,\ldots,\nu_k$ are rationally
independent; \item there exists a continuous function $\Phi
:\mathbb T^{k}\to X$ such that
$\Phi(t_1+2\pi,t_2+2\pi,\ldots,t_k+2\pi)=\Phi(t_1,t_2,\ldots,t_k)$
for all $(t_1,t_2,\ldots,t_k)\in \mathbb T^{k}$; \item
$\varphi(t)=\Phi(\nu_1 t,\nu_2 t,\ldots,\nu_k t)$ for $t\in
\mathbb T$.
\end{enumerate}
\end{definition}

Let $\varphi \in C(\mathbb T,X)$. Denote by $\mathfrak
L_{\varphi}$ the family of all sequences $\{t_n\}\subset \mathbb
T$ such that $t_n\to +\infty$ and $\{\varphi^{t_n}\}$ converges in
$C(\mathbb T,X)$ as $n\to \infty$.

\begin{definition} \rm
A function $\varphi \in C(\mathbb T,X)$ is said to be comparable
by character of recurrence in the limit \cite[Ch.II]{Che_2009} (or
shortly comparable in the limit) with the function $\psi \in
C(\mathbb T,Y)$ if $\mathfrak L_{\psi}\subseteq \mathfrak
L_{\varphi}$.
\end{definition}

\begin{definition}\label{defAP01} A function $\varphi \in C(\mathbb
T,X)$is said to be asymptotically stationary (respectively,
asymptotically $\tau$-periodic, asymptotically Levitan almost
periodic, asymptotically quasi-periodic with the spectrum of
frequencies $\nu_1,\nu_2,\dots,\nu_k$, asymptotically Bohr almost
periodic, asymptotically Bohr almost automorphic, asymptotically
Birkhoff recurrent) \cite[Ch.I]{Che_2009} if there exists a
stationary (respectively, $\tau$-periodic, Levitan almost
periodic, quasi-periodic with the spectrum of frequencies
$\nu_1,\nu_2,\dots,\nu_k$, Bohr almost periodic, Bohr almost
automorphic, Birkhoff recurrent) function $p\in C(\mathbb T,X)$
such that $\lim\limits_{t\to +\infty}\rho(\varphi(t),p(t))=0$.
\end{definition}

\begin{theorem}\label{th2} (\cite[ChII]{Che_2009}, \cite[Ch.I]{Che_2020})
Let $\varphi \in C(\mathbb T,X)$
be comparable in the limit by character of recurrence with
$\psi\in C(\mathbb T,Y)$. If the function $\psi$ is asymptotically
stationary (respectively, asymptotically $\tau$-periodic,
asymptotically quasi-periodic with the spectrum of frequencies
$\nu_1,\nu_2,\dots,\nu_k$, asymptotically Bohr almost periodic,
asymptotically Bohr almost automorphic, asymptotically Birkhoff
recurrent, positively Lagrange stable), then so is $\varphi$.
\end{theorem}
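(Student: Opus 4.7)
The plan is to exploit the approximating function $p$ (with property P) for $\psi$, transfer the convergence information to $\varphi$ via the hypothesis $\mathfrak{L}_\psi\subseteq \mathfrak{L}_\varphi$, and then upgrade a subsequential limit to a genuine asymptotic equivalent.

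First I would observe that $\rho(\psi(t),p(t))\to 0$ as $t\to +\infty$ implies $d(\psi^{t_n},p^{t_n})\to 0$ for every sequence $t_n\to +\infty$, because for each fixed $L>0$ the quantity $\max_{|t|\le L}\rho(\psi(t_n+t),p(t_n+t))$ is bounded by $\sup_{s\ge t_n-L}\rho(\psi(s),p(s))$, which tends to $0$; the description of $d$ in Lemma~\ref{l1} then gives $d(\psi^{t_n},p^{t_n})\to 0$. Consequently $\mathfrak{L}_\psi=\mathfrak{L}_p$, with identical limits, and the hypothesis becomes $\mathfrak{L}_p\subseteq \mathfrak{L}_\varphi$.

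Next I would choose a sequence $\{t_n\}\subset \mathbb{T}$ with $t_n\to +\infty$ and $p^{t_n}\to p$, tailored to the recurrence class: any sequence for the stationary case; $t_n=n\tau$ for the $\tau$-periodic case; for the Bohr/Levitan almost periodic, quasi-periodic, almost automorphic and Birkhoff recurrent cases a sequence of $\varepsilon_n$-almost periods with $\varepsilon_n\downarrow 0$, relying on the fact that $p$ is a recurrent point of its compact hull $H(p)$; for positive Lagrange stability any sequence admitting a convergent subnet. By $\mathfrak{L}_p\subseteq \mathfrak{L}_\varphi$ we get $\varphi^{t_n}\to q$ for some $q\in C(\mathbb{T},X)$, and this $q$ is the candidate asymptotic approximant. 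Its property P is inherited by shift-invariance arguments: e.g.\ in the $\tau$-periodic case $\varphi^{t_n+\tau}\to q^\tau$ and also $\to q$, forcing $q^\tau=q$; the (almost) periodic/recurrent/automorphic cases are handled analogously, transferring the relatively dense set of $\varepsilon$-almost periods of $p$ to $q$ through the comparability relation.

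Finally, the delicate step is $\rho(\varphi(t),q(t))\to 0$ as $t\to +\infty$. I would argue by contradiction: if not, there exist $\varepsilon_0>0$ and $s_n\to +\infty$ with $\rho(\varphi(s_n),q(s_n))\ge \varepsilon_0$. Using that the positive semi-orbit of $p$ is pre-compact (for every class considered) and passing to subsequences, $p^{s_n}\to \tilde p\in H(p)$, hence $\psi^{s_n}\to \tilde p$ as well; comparability then yields $\varphi^{s_n}\to \tilde q$ for some $\tilde q\in C(\mathbb{T},X)$. The recurrence of $p$ in $H(p)$ produces integers $k_n$ with $p^{s_n+k_n}$ approaching $p$ along shifts that match the original sequence $t_n$, so $\tilde q$ must be a shift of $q$ coming from $H(q)$; comparing values at $0$ gives $\rho(\varphi(s_n),q(s_n))\to 0$, contradicting $\varepsilon_0>0$. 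I expect this last step to be the main obstacle, since it requires carefully lining up the hull structures of $p$, $q$, $\psi$, and $\varphi$ under the Bebutov shift dynamical system and invoking the comparability relation in its full strength, rather than only along one distinguished sequence.
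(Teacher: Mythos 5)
The paper does not actually prove Theorem \ref{th2}; it is quoted from \cite[Ch.II]{Che_2009} and \cite[Ch.I]{Che_2020}, so your proposal has to be measured against the standard comparability-method argument of those references. Your opening reduction is correct and is indeed the right first move: $\rho(\psi(t),p(t))\to 0$ gives $d(\psi^{t_n},p^{t_n})\to 0$ for every $t_n\to+\infty$, hence $\mathfrak L_{\psi}=\mathfrak L_{p}$ with identical limits, and the hypothesis becomes $\mathfrak L_{p}\subseteq\mathfrak L_{\varphi}$. Taking $q=\lim\varphi^{t_n}$ along a returning sequence of $p$ is also the right candidate (you should add that $q$ does not depend on the chosen sequence: interleave two such sequences and use $\mathfrak L_{p}\subseteq\mathfrak L_{\varphi}$ once more).

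There is, however, a genuine gap exactly where you flag one. In your contradiction argument the conclusion $\rho(\varphi(s_n),q(s_n))\to 0$ requires that $\varphi^{s_n}$ and $q^{s_n}$ converge to the \emph{same} limit $\tilde q$; you only obtain $\varphi^{s_n}\to\tilde q$ and then assert, via unexplained ``integers $k_n$'', that $\tilde q$ lies in $H(q)$ --- which, even if true, does not identify $\lim q^{s_n}$ with $\tilde q$. What is actually needed is that the assignment $\lim p^{s_n}\mapsto\lim\varphi^{s_n}$ is a well-defined continuous homomorphism $h$ from the $\omega$-limit set of $p$ in the Bebutov system onto that of $\varphi$ (well-definedness by interleaving, continuity by a diagonal argument using compactness of the hull of $p$, available in every class listed), that $q=h(p)$, and that $\lim q^{s_n}=h(\lim p^{s_n})=\lim\varphi^{s_n}$ for every $\{s_n\}\in\mathfrak L_{p}$; then $\rho(\varphi(s_n),q(s_n))\le\rho(\varphi^{s_n}(0),\tilde q(0))+\rho(\tilde q(0),q^{s_n}(0))\to 0$ yields the contradiction. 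The same homomorphism is what you need to transfer the recurrence class from $p$ to $q$: shift-invariance suffices for the stationary and periodic cases, but for Bohr almost periodicity, almost automorphy and Birkhoff recurrence, uniform continuity of $h$ only converts $\varepsilon$-almost periods of $p$ into $\varepsilon$-shifts of $q$ in the compact-open metric, i.e.\ it gives Levitan almost periodicity; upgrading to the stated classes requires first proving Lagrange stability of $q$ (which does follow from compactness of $H(p)$ together with $\mathfrak L_{p}\subseteq\mathfrak L_{\varphi}$) and then the preservation of these classes under homomorphisms of compact minimal flows. None of this is unfixable --- it is precisely the machinery of the cited references --- but as written the decisive steps are missing.
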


Let $W\subseteq X$ be a bounded (respectively, compact) subset of
$X$. Denote by $C(\mathbb T\times W,X)$ the space of all
continuous functions $f :\mathbb T\times W \to X$ equipped with
the compact-open topology. On the space $C(\mathbb \mathbb T\times
W,X)$ is defined \cite[Ch.I]{Che_2015} a shift dynamical system
(Bebutov's dynamical system) $(C(\mathbb \mathbb T\times
W,X),\mathbb T,\sigma)$, where $\sigma :\mathbb T\times C(\mathbb
T\times W,X)\to C(\mathbb T\times W,X)$ is defined by equality
$\sigma(\tau,f)=f^{\tau}$ and $f^{\tau}(t,x)=f(t+\tau,x)$ for any
$f\in C(\mathbb T\times W,X)$, $t,\tau \in \mathbb T$ and $x\in
W$.

If $Q$ is a compact subset of $W$, then the topology on the space
$C(T\times Q,X)$ can be \cite{shcher67} defined by distance
\begin{equation}\label{eqD1}
d(f,g):=\sup\limits_{l>0}\min\{\max\limits_{|t|\le l,\ x\in
Q}\rho(f(t,x),g(t,x)), l^{-1}\} .\nonumber
\end{equation}

\begin{definition}\label{defAA1} A function $f\in C(\mathbb
T\times W,X)$ is said to be:
\begin{enumerate}
\item[--] Bohr almost periodic in $t\in \mathbb T$ uniformly with
respect to $x\in W$ if for arbitrary positive number $\varepsilon$
the set
\begin{equation}\label{eqAA2}
\mathcal T(\varepsilon,f):=\{\tau \in \mathbb T|\
\sup\limits_{t\in\mathbb T,\ x\in
W}\rho(f(t+\tau,x),f(t,x))<\varepsilon\}\nonumber
\end{equation}
is relatively dense; \item[--] asymptotically Bohr almost periodic
in $t\in \mathbb T$ uniformly with respect to $x\in W$ if there
exists a Bohr almost periodic in $t\in \mathbb T$ uniformly with
respect to $x\in W$ function $P\in C(\mathbb T\times W,X)$ such
that
\begin{equation}\label{eqAA3}
\lim\limits_{t\to +\infty}\sup\limits_{x\in
W}\rho(f(t,x),P(t,x))=0.\nonumber
\end{equation}
\end{enumerate}
\end{definition}

Analogically we can define a notion of
\begin{enumerate}
\item[--] Levitan almost periodicity (respectively, almost
automorphy, Birkhoff recurrence and so on) in $t\in \mathbb T$
uniformly with respect to $x\in W$; \item[--] asymptotically
Levitan almost periodicity (respectively, asymptotically almost
automorphy, asymptotically Birkhoff recurrence and so on) in $t\in
\mathbb T$ uniformly with respect to $x\in W$.
\end{enumerate}

If $Q$ is a compact subset of $W$ and $f\in C(\mathbb T\times
W,X)$ then we denote by $f_{Q}$ the restriction $f$ on $\mathbb
T\times Q$, i.e., $f_{Q}:=f\big{|}_{\mathbb T\times Q}$ and
$\mathfrak M_{f_{Q}}:=\{\{t_{n}\}\subset \mathbb T|\ f_{Q}^{t_n}$
converges in $C(\mathbb T\times Q,X)$ $\}$ (respectively,
$\mathcal L_{f_{Q}}:=\{\{t_n\}\in \mathfrak M_{f_{Q}}|\ $ such
that $t_n\to +\infty$ as $n\to \infty$ $\}$.

Note that $C(Q,X)$ is a complete metric space equipped with the
distance
\begin{equation}\label{eqD2}
d(F_1,F_2):=\max\limits_{x\in Q}\rho(F_1(x),F_2(x)).\nonumber
\end{equation}

Let $Q$ be a compact subset of $X$. Consider the functional spaces
$C(\mathbb T\times Q,X)$ and $C(\mathbb T,C(Q,X))$. For any $f\in
C(\mathbb T\times Q,X)$ corresponds \cite{shcher67} a unique map
$F\in C(\mathbb T,C(Q,X))$ defined by equality
\begin{equation}\label{eqAA4}
F_{f}(t):=f(t,\cdot). \nonumber
\end{equation}
This means that it is well defined the mapping $h:C(\mathbb
T\times Q,X)\to C(\mathbb T,C(Q,X))$ by equality
\begin{equation}\label{eqD_3}
h(f)=F_{f}.\nonumber
\end{equation}

The following statement holds.

\begin{theorem}\label{thAA1} \cite{shcher67} Let $Q$ be a compact subset of $X$,
then the mapping $h: C(\mathbb T\times Q,X)\to C(T,C(Q,X))$
possesses the following properties:
\begin{enumerate}
\item $h$ is a homeomorphism of the space $C(\mathbb T\times Q,X)$
onto $C(\mathbb T,C(Q,X))$; \item for any $l>0$ and $f,g\in
C(\mathbb T\times Q,X)$ we have
\begin{equation}\label{eqH_1}
\max\limits_{|t|\le l,\ x\in
Q}\rho(f(t,x),g(t,x))=\max\limits_{|t|\le
l}\rho(F_{f}(t),F_{g}(t));\nonumber
\end{equation}
\item $d(f,g)=d(F_{f},F_{g})$ for any $f,g\in C(\mathbb T\times
Q,X)$,
\end{enumerate}
i.e., the mapping $h$ defines an isometry between $C(T\times Q,X)$
and $C(\mathbb T,C(Q,X))$.
\end{theorem}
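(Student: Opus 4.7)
The plan is to handle the three items in the order (2), (3), (1), since (3) follows from (2) by the definition of the metric $d$ given in the excerpt, and (1) follows from (3) together with bijectivity of $h$. A preliminary step is to verify that $h$ actually maps into $C(\mathbb T, C(Q,X))$, i.e., that $F_f : t \mapsto f(t,\cdot)$ is continuous from $\mathbb T$ into $C(Q,X)$. For fixed $t_0 \in \mathbb T$, compactness of $\{t_0\}\times Q$ together with continuity of $f$ on $\mathbb T \times Q$ gives uniform continuity of $f$ on a compact neighborhood $[t_0-1, t_0+1] \times Q$, and from this one extracts $\sup_{x\in Q}\rho(f(t,x), f(t_0,x)) \to 0$ as $t \to t_0$, which is exactly continuity of $F_f$ at $t_0$.

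For (2), the supremum metric on $C(Q,X)$ given in the excerpt yields $\rho(F_f(t), F_g(t)) = \max_{x\in Q}\rho(f(t,x), g(t,x))$; taking the maximum over $|t| \le l$ on both sides and using the elementary identity $\max_{|t|\le l}\max_{x\in Q} = \max_{(t,x)\in [-l,l]\times Q}$ produces the claimed equality. Item (3) is then immediate: substituting (2) into the definitions of $d(f,g)$ and $d(F_f, F_g)$ shows that both are equal to $\sup_{l>0}\min\{M(l), l^{-1}\}$, where $M(l):=\max_{|t|\le l, x\in Q}\rho(f(t,x), g(t,x))$.

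For (1), $h$ is injective since $F_f = F_g$ forces $f(t,x) = F_f(t)(x) = F_g(t)(x) = g(t,x)$ on all of $\mathbb T \times Q$, and the isometry identity (3) makes $h$ a topological embedding. The only genuinely non-trivial point is surjectivity. Given $F \in C(\mathbb T, C(Q,X))$, I define $f(t,x) := F(t)(x)$ and verify joint continuity: if $(t_n, x_n) \to (t_0, x_0)$, then the triangle inequality yields $\rho(f(t_n, x_n), f(t_0, x_0)) \le \max_{x\in Q}\rho(F(t_n)(x), F(t_0)(x)) + \rho(F(t_0)(x_n), F(t_0)(x_0))$, where the first term vanishes by continuity of $F$ at $t_0$ in the uniform norm on $Q$ and the second by continuity of the single map $F(t_0): Q \to X$ at $x_0$. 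Hence $f \in C(\mathbb T \times Q, X)$ and $h(f) = F$ by construction.

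No step presents a real obstacle; the statement is essentially the standard ``Currying'' adjunction between function spaces, and compactness of $Q$ is precisely what makes the supremum metric on $C(Q,X)$ well-defined and the uniform continuity arguments operative. The one place to be slightly careful is surjectivity, where one must resist the temptation to appeal to continuity of $F$ as a map into $C(Q,X)$ at the wrong point; writing out the triangle inequality above makes the argument transparent.
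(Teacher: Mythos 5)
Your proof is correct and complete: the preliminary verification that $F_f$ is continuous (via uniform continuity on $[t_0-1,t_0+1]\times Q$, which uses compactness of $Q$), the reduction of items (2) and (3) to the identity $\max_{|t|\le l}\max_{x\in Q}=\max_{(t,x)\in[-l,l]\times Q}$, and the currying argument for surjectivity with the two-term triangle inequality are all sound. The paper itself gives no proof of this theorem --- it is stated with a citation to Shcherbakov's 1967 article --- so there is nothing internal to compare against; your argument is the standard exponential-law proof one would expect to find in that reference.
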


\begin{lemma}\label{lAA0} The following relation
\begin{equation}\label{eqH}
h(\sigma(\tau,f))=\sigma(\tau,h(f))
\end{equation}
holds for any $(\tau,f)\in \mathbb T\times C(\mathbb T\times
Q,X)$, i.e., $h$ is an isometric homeomorphism of dynamical system
$(C(\mathbb T\times Q,X),\mathbb T,\sigma)$ onto $(C(\mathbb
T,C(Q,X)),\mathbb T,\sigma)$.
\end{lemma}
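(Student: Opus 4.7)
The plan is a direct unfolding of definitions, followed by a short appeal to Theorem~\ref{thAA1} to upgrade the resulting equivariance into the claimed isomorphism of dynamical systems.

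First, I would fix an arbitrary pair $(\tau,f)\in \mathbb T\times C(\mathbb T\times Q,X)$ and evaluate both sides of (\ref{eqH}) at an arbitrary $t\in\mathbb T$, viewing each side as an element of $C(Q,X)$. On the left, by definition of the Bebutov shift on $C(\mathbb T\times Q,X)$, the function $\sigma(\tau,f)=f^{\tau}$ acts by $f^{\tau}(s,x)=f(s+\tau,x)$; applying $h$ then gives $h(\sigma(\tau,f))(t)=F_{f^{\tau}}(t)=f^{\tau}(t,\cdot)=f(t+\tau,\cdot)$.

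Next, on the right, by definition of $h$ we have $h(f)(s)=F_f(s)=f(s,\cdot)\in C(Q,X)$, and by definition of the shift on $C(\mathbb T,C(Q,X))$, $\sigma(\tau,h(f))(t)=F_f(t+\tau)=f(t+\tau,\cdot)$. Comparing the two expressions shows they coincide as elements of $C(Q,X)$ for every $t\in\mathbb T$, which yields (\ref{eqH}).

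To obtain the second assertion, I would combine (\ref{eqH}) with Theorem~\ref{thAA1}: that theorem already supplies the bijective continuity (in fact, isometry) of $h$, and (\ref{eqH}) is precisely the equivariance $h\circ\sigma(\tau,\cdot)=\sigma(\tau,\cdot)\circ h$, so together they say that $h$ is an isomorphism in the category of (isometric) dynamical systems. I do not foresee any genuine obstacle: the lemma is a bookkeeping statement asserting that the Shcherbakov identification intertwines the two shift actions, so that recurrence-type properties established in $C(\mathbb T\times Q,X)$ transfer automatically to $C(\mathbb T,C(Q,X))$ and vice versa; the only care required is to remember that equality is tested pointwise in $t$ with values in the function space $C(Q,X)$.
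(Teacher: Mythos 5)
Your proposal is correct and follows essentially the same route as the paper: both sides of (\ref{eqH}) are evaluated at an arbitrary $t\in\mathbb T$ and unfolded via the definitions of the two shifts and of $h$, each reducing to $f(t+\tau,\cdot)\in C(Q,X)$. The concluding appeal to Theorem~\ref{thAA1} for the isometric homeomorphism part is exactly what the paper leaves implicit.
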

\begin{proof} Note that
\begin{eqnarray}\label{eqH1}
& h(\sigma(\tau,f))=h(f^{\tau})=F_{f^{\tau}}, \nonumber \\
& F_{f^{\tau}}(t)=f^{\tau}(t,\cdot)=f(t+\tau,\cdot)
\end{eqnarray}
and
\begin{eqnarray}\label{eqH2}
& \sigma(\tau,h(f))=h(f)^{\tau}=(F_{f})^{\tau},\nonumber \\
& F_{f}^{\tau}(t)=F_{f}(t+\tau)=f(t+\tau,\cdot)
\end{eqnarray}
for any $(\tau,f)\in\mathbb T\times C(\mathbb T\times Q,X)$ and
$t\in \mathbb T$. From (\ref{eqH1})-(\ref{eqH2}) we obtain
(\ref{eqH}). Lemma is proved.
\end{proof}

\begin{definition}\label{defLS2} A function $F\in C(\mathbb T\times
Q,X)$ is said to be Lagrange stable if the set
$\Sigma_{F}:=\{\sigma(\tau,F)|\ \tau\in \mathbb T\}$ is
pre-compact in $C(\mathbb T\times Q,X)$.
\end{definition}

\begin{lemma}\label{lLS2} \cite[Ch.IV]{Bro79}, \cite[Ch.III]{Sch72}, \cite{SK_1974}
Assume that $Q$ is a compact subset of
$X$. A function $F\in C(\mathbb T\times Q,X)$ is Lagrange stable
if and only if the following conditions hold:
\begin{enumerate}
\item the set $F(\mathbb T\times Q)$ is pre-compact in $X$; \item
the mapping $F:\mathbb T\times Q\to X$ is uniformly continuous.
\end{enumerate}
\end{lemma}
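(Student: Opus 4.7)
\textbf{Proof plan for Lemma \ref{lLS2}.} The strategy is to transfer the problem to the one-variable setting of Lemma \ref{lLS1} via the isometric isomorphism of dynamical systems $h:(C(\mathbb T\times Q,X),\mathbb T,\sigma)\to(C(\mathbb T,C(Q,X)),\mathbb T,\sigma)$ provided by Theorem \ref{thAA1} and Lemma \ref{lAA0}. Since $h$ is an isometric homeomorphism which intertwines the shifts, $F\in C(\mathbb T\times Q,X)$ is Lagrange stable in the sense of Definition \ref{defLS2} if and only if $F_F:=h(F)\in C(\mathbb T,C(Q,X))$ is Lagrange stable in the sense of Definition \ref{defLS1}.

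Applying Lemma \ref{lLS1} in the complete metric space $(C(Q,X),d_Q)$, the second condition is equivalent to: (a) $F_F(\mathbb T)=\{f(t,\cdot):t\in\mathbb T\}$ is pre-compact in $C(Q,X)$; (b) the map $t\mapsto F_F(t)=f(t,\cdot)$ is uniformly continuous from $\mathbb T$ to $C(Q,X)$. So it suffices to show that the conjunction of (a) and (b) is equivalent to the conjunction of conditions (i) and (ii) of the lemma.

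For the direction $\Rightarrow$, note that condition (b) is exactly the statement that for every $\varepsilon>0$ there is $\delta>0$ with $|t-s|<\delta$ implying $\sup_{x\in Q}\rho(F(t,x),F(s,x))<\varepsilon$. On the other hand, since $Q$ is compact and $C(Q,X)$ is a space of continuous functions, pre-compactness of $F_F(\mathbb T)\subset C(Q,X)$ is equivalent, by the Arzel\`a--Ascoli theorem, to the combination of pointwise pre-compactness of $\{F(t,x):t\in\mathbb T\}$ for every $x\in Q$ together with equicontinuity of the family $\{F(t,\cdot):t\in\mathbb T\}$ on $Q$. Equicontinuity yields: for every $\varepsilon>0$ there is $\eta>0$ such that $d_Q(x,y)<\eta$ implies $\rho(F(t,x),F(t,y))<\varepsilon$ uniformly in $t$. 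Combining this with (b) via the triangle inequality gives uniform continuity of $F$ on $\mathbb T\times Q$, which is condition (ii). To obtain (i), take any sequence $\{F(t_n,x_n)\}\subset F(\mathbb T\times Q)$; by compactness of $Q$ extract $x_n\to x_0$, by pointwise pre-compactness extract $F(t_n,x_0)\to y$, and by equicontinuity conclude $F(t_n,x_n)\to y$ by a triangle inequality/$\varepsilon$-argument.

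For $\Leftarrow$, uniform continuity of $F$ on $\mathbb T\times Q$ in condition (ii) restricted to the $t$-variable gives (b) directly. Pre-compactness of $F(\mathbb T\times Q)\subset X$ from (i) immediately yields pointwise pre-compactness for each $x\in Q$, while uniform continuity (ii) restricted to the $Q$-variable produces the equicontinuity of $\{F(t,\cdot):t\in\mathbb T\}$; Arzel\`a--Ascoli then gives (a). I expect the main technical point to be the careful splitting of the uniform continuity of $F$ on the product $\mathbb T\times Q$ into its two one-variable components and the correct invocation of Arzel\`a--Ascoli in the complete (not necessarily locally compact) metric target $X$, using that pre-compactness of the range $F(\mathbb T\times Q)$ in $X$ replaces the usual relative compactness hypothesis.
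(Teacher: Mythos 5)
Your proof is correct. Note, however, that the paper does not actually prove Lemma \ref{lLS2} at all: it is stated with citations to \cite[Ch.IV]{Bro79}, \cite[Ch.III]{Sch72} and \cite{SK_1974}, so there is no in-paper argument to compare against. Your route is a legitimate self-contained derivation, and it is very much in the spirit of the machinery the paper does set up: you use the isometric conjugacy $h$ of the shift systems (Theorem \ref{thAA1} together with Lemma \ref{lAA0}) to reduce Lagrange stability of $F\in C(\mathbb T\times Q,X)$ to Lagrange stability of $h(F)\in C(\mathbb T,C(Q,X))$, apply the one-variable criterion of Lemma \ref{lLS1} in the complete metric space $C(Q,X)$, and then translate its two conditions back via Arzel\`a--Ascoli. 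This is exactly the pattern the author uses to prove Theorem \ref{thAA2}, so your argument fits the paper's framework rather than departing from it. The individual steps check out: the transfer of pre-compactness of $\Sigma_F$ under the isometry $h$ is immediate; the Arzel\`a--Ascoli characterization of pre-compact subsets of $C(Q,X)$ (equicontinuity plus pointwise pre-compactness, with $Q$ compact and $X$ complete) is the correct version; the triangle-inequality assembly of uniform continuity of $F$ on $\mathbb T\times Q$ from uniform equicontinuity in $x$ and uniform continuity in $t$ is sound; and the diagonal extraction argument for pre-compactness of $F(\mathbb T\times Q)$ works. What your approach buys is a proof that leans only on results already established in Section 2, rather than an appeal to external references; the cost is essentially nil, since Theorem \ref{thAA1} and Lemma \ref{lAA0} are already proved in the paper.
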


\begin{lemma}\label{lLS3} \cite[Ch.III]{Sch72}, \cite{SK_1974}  Let $\varphi\in C(\mathbb
T,X)$be a Lagrange stable function and
$Q:=\overline{\{\varphi(\mathbb T)\}}$, where by bar is denoted
the closure in $X$. If the function $F\in C(\mathbb T\times Q,X)$
is Lagrange stable, then the function $\psi \in C(T,X)$ defined by
equality $\psi(t):=F(t,\varphi(t))$ for any $t\in\mathbb T$ is
also Lagrange stable.
\end{lemma}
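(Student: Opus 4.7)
The plan is to apply the characterizations of Lagrange stability provided by Lemmas \ref{lLS1} and \ref{lLS2}, which reduce the property to two conditions: precompactness of the image and uniform continuity of the map. I will verify each of these for $\psi$ in turn, rather than working directly with the compact-open topology on $C(\mathbb T,X)$.

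First I would observe that since $\varphi$ is Lagrange stable, Lemma \ref{lLS1} gives that $\varphi(\mathbb T)$ is precompact in $X$, so $Q=\overline{\varphi(\mathbb T)}$ is indeed compact; in particular $\varphi(t)\in Q$ for every $t\in\mathbb T$, so $\psi(t)=F(t,\varphi(t))$ is well defined, and Lemma \ref{lLS2} applies to $F$. Precompactness of $\psi(\mathbb T)$ is then immediate: $\psi(\mathbb T)\subseteq F(\mathbb T\times Q)$, and the right-hand side is precompact by Lemma \ref{lLS2} applied to the Lagrange stable $F$.

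For uniform continuity, given $\varepsilon>0$, I would extract from Lemma \ref{lLS2} a $\delta_{1}>0$ such that $|t_{1}-t_{2}|<\delta_{1}$ and $\rho(x_{1},x_{2})<\delta_{1}$ (with $x_{1},x_{2}\in Q$) imply $\rho(F(t_{1},x_{1}),F(t_{2},x_{2}))<\varepsilon$; then from Lemma \ref{lLS1} applied to $\varphi$ a $\delta_{2}>0$ such that $|t_{1}-t_{2}|<\delta_{2}$ forces $\rho(\varphi(t_{1}),\varphi(t_{2}))<\delta_{1}$. Setting $\delta=\min\{\delta_{1},\delta_{2}\}$ gives $\rho(\psi(t_{1}),\psi(t_{2}))<\varepsilon$ whenever $|t_{1}-t_{2}|<\delta$. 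Combining the two verified conditions with Lemma \ref{lLS1} applied to $\psi$ yields the desired Lagrange stability.

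There is no serious obstacle here: the argument is a standard composition-of-uniformly-continuous-maps calculation, and the only conceptual point is to route through the two-part characterization of Lagrange stability (precompact image plus uniform continuity) rather than trying to extract convergent subsequences from $\{\psi^{\tau_{n}}\}$ directly. The compactness of $Q$, inherited from the Lagrange stability of $\varphi$, is what allows Lemma \ref{lLS2} to be invoked in the first place.
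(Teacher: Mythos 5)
Your proof is correct. Note, however, that the paper does not prove this lemma at all: it is stated as a cited result from \cite[Ch.III]{Sch72} and \cite{SK_1974}, so there is no in-paper argument to compare against. Your route --- reducing Lagrange stability to the two-part characterizations of Lemmas \ref{lLS1} and \ref{lLS2} (precompactness of the image plus uniform continuity), observing $\psi(\mathbb T)\subseteq F(\mathbb T\times Q)$ for the first part, and composing the two moduli of uniform continuity for the second --- is the standard argument and is complete; the only prerequisite, compactness of $Q$, is correctly extracted from the Lagrange stability of $\varphi$ at the outset.
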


\begin{theorem}\label{thAA2} Let $Q$ be a compact subset of $X$.
Then the following statements are equivalent: \begin{enumerate}
\item the function $f\in C(\mathbb T\times Q,X)$ is Bohr almost
periodic (respectively, Levitan almost periodic) in $t\in \mathbb
T$ uniformly w.r.t. $x\in Q$; \item the function $F:=h(f)\in
C(\mathbb T,C(Q,X))$ is Bohr almost periodic (respectively,
Levitan almost periodic).
\end{enumerate}
\end{theorem}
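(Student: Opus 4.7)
The plan is to transport both notions through the isometric intertwining $h$ of shift dynamical systems established in Theorem \ref{thAA1} and Lemma \ref{lAA0}. The crucial input is that $C(Q,X)$ carries the sup-norm metric $d(F_1,F_2)=\max_{x\in Q}\rho(F_1(x),F_2(x))$, so for every $f\in C(\mathbb T\times Q,X)$ and every $t,\tau\in\mathbb T$ one has the pointwise identity
\[
d(F_f(t+\tau),F_f(t))=\max_{x\in Q}\rho(f(t+\tau,x),f(t,x)).
\]

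For the Bohr part I would take the supremum over $t\in\mathbb T$ of this identity, obtaining
\[
\sup_{t\in\mathbb T}d(F_f(t+\tau),F_f(t))=\sup_{t\in\mathbb T,\,x\in Q}\rho(f(t+\tau,x),f(t,x)).
\]
Hence, for each $\varepsilon>0$, the set $\mathcal T(\varepsilon,f)$ of Definition \ref{defAA1} coincides verbatim with the set of Bohr $\varepsilon$-almost periods of $F_f=h(f)$ regarded as an element of $C(\mathbb T,C(Q,X))$. Relative density of one set is therefore equivalent to relative density of the other, which settles (i)$\Leftrightarrow$(ii) in the Bohr case.

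For the Levitan part I would use Theorem \ref{thAA1}(iii) combined with Lemma \ref{lAA0} to write
\[
d(f^\tau,f)=d(F_{f^\tau},F_f)=d(F_f^\tau,F_f)
\]
for every $\tau\in\mathbb T$. Thus $\tau$ is an $\varepsilon$-shift of $f$ in $C(\mathbb T\times Q,X)$ iff it is an $\varepsilon$-shift of $F_f$ in $C(\mathbb T,C(Q,X))$. Consequently, for any complete metric space $Y$, any Bohr almost periodic $\psi\in C(\mathbb T,Y)$ and any $\varepsilon,\delta>0$, the implication $\tau\in\mathcal T(\psi,\delta)\Rightarrow d(f^\tau,f)<\varepsilon$ holds iff the analogous implication with $F_f$ in place of $f$ holds. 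Hence $f$ admits such a Bohr almost periodic companion $\psi$ with a matching $\delta=\delta(\varepsilon)$ iff $F_f$ does, yielding the equivalence in the Levitan case.

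The only genuine wrinkle is fixing precisely what ``Levitan almost periodic in $t\in\mathbb T$ uniformly w.r.t.\ $x\in Q$'' is meant to denote, since the paper introduces the notion only by analogy. I would adopt the natural reading in which $d(\varphi^\tau,\varphi)<\varepsilon$ in the definition of Levitan almost periodicity is replaced by $d(f^\tau,f)<\varepsilon$ relative to the metric on $C(\mathbb T\times Q,X)$; with this reading the isometry of Theorem \ref{thAA1} does all the work, and the argument reduces to the two one-line computations displayed above.
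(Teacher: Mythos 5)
Your proposal is correct and follows essentially the same route as the paper: the paper's own proof is a one-line appeal to the definitions together with Theorem \ref{thAA1} and Lemma \ref{lAA0}, and your two displayed identities are precisely the computations that appeal compresses. Your explicit remark on how to read ``Levitan almost periodic in $t$ uniformly w.r.t.\ $x\in Q$'' (defined in the paper only by analogy) is a reasonable and faithful resolution of the only ambiguity.
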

\begin{proof} This
statement follows directly from the corresponding definitions and
Theorem \ref{thAA1} and Lemma \ref{lAA0}.
\end{proof}

\begin{remark}\label{remR01} Statements analogous to Theorem \ref{thAA2} also
hold for other classes of functions that are somehow almost
automorphic, Birkhoff recurrent, etc.
\end{remark}

\section{Bounded on the semi-axis solutions of linear differential
equations}\label{S3}

Let $(\mathfrak B,|\cdot|)$ be a Banach space with the norm
$|\cdot|$, $\mathcal A: D(\mathcal A)\to \mathfrak B$ be a linear
operator acting from $D(\mathcal A)\subseteq \mathfrak B$ to
$\mathfrak B$ generating a $C_{0}$-semigroup $\{U(t)\}_{t\ge 0}$
on the space $\mathfrak B$. Denote by $B[0,r]:=\{x\in \mathfrak B
|\ |x|\le r \}$.

Consider a linear nonhomogeneous equation
\begin{equation}\label{eqLN1}
\dot{x}=Ax+f(t)
\end{equation}
on the space $\mathfrak B$, where $f\in C(\mathbb R_{+},\mathfrak
B)$ and $A$ is an infinitesimal generator which generates a
$C_0$-semigroup $\{U(t)\}_{t\ge 0}$ acting on $\mathfrak B$.

Recall that a continuous function $u: [0,a)\to \mathfrak B$
($a>0$) is called a weak (mild) solution of equation (\ref{eqLN1})
passing through the point $x\in \mathfrak B$ at the initial moment
$t=0$ if
\begin{equation}\label{eqIE1}
u(t)=U(t)x + \int_{0}^{t}U(t-s)f(s)ds \nonumber
\end{equation}
for all $t\in [0,T]$ and $0<T<a$.

\begin{definition} A mild solution $\varphi \in C(\mathbb R_{+},\mathfrak B)$ of equation (\ref{eqLN1}) is called
\cite[ChIII]{Sch72} compatible by character of recurrence in the
limit if it is comparable by character of recurrence in the limit
with $f$, i.e., if $\mathfrak L_{f}\subseteq \mathfrak
L_{\varphi}$.
\end{definition}

\begin{definition}\label{defH1} A semigroup of operators $\{U(t)\}_{t\ge
0}$ is said to be hyperbolic if there is a projection $\mathcal P$
and constants $\mathcal N,\nu >0$ such that each $U(t)$ commutes
with $\mathcal P$, $U(t): Im \mathcal Q\mapsto Im \mathcal Q$ is
invertible and for every $x\in E$
\begin{equation}\label{eqLSDE01}
|U(t)\mathcal P x|\le \mathcal N e^{-\nu t}|x|,\ \ \ \mbox{for}\
t\ge 0;\nonumber
\end{equation}
\begin{equation}\label{eqLSDE02}
|U_{\mathcal Q}(t) x|\le \mathcal N e^{\nu t}|x|,\ \ \ \mbox{for}\
t < 0;\nonumber
\end{equation}
where $\mathcal Q:=I-\mathcal P$ and, for $t<0$, $U_{\mathcal
Q}(t):=[U(-t)\mathcal Q]^{-1}$.
\end{definition}

A Green's function $G(t)$ (see, for example, \cite[Ch.VII]{C-L})
for hyperbolic semigroup $U(t)$ is defined by
\begin{equation}\label{eqLS2}
 G(t):=\left\{\begin{array}{ll}
&\!\! U(t)\mathcal P,\;\ \ \ \mbox{if}\;\ t\ge 0 \\[2mm]
&\!\! -U_{\mathcal Q}(t), \;\ \mbox{if}\;\ t<0\ .
\end{array}
\right.\nonumber
\end{equation}

\begin{remark}\label{remG1} Let $G$ be the Green's
function for hyperbolic semigroup $\{U(t)\}_{t\ge 0}$, then there
are positive numbers $\mathcal N$ and $\nu$ such that
\begin{equation}\label{eqHG1}
\|G(t)\|\le \mathcal N e^{-\nu |t|}
\end{equation}
for any $t\in \mathbb R$.
\end{remark}

\begin{lemma}\label{lGH1} Let $G$ be the Green's
function for hyperbolic semigroup $\{U(t)\}_{t\ge 0}$ and
$\mathcal N$ and $\nu$ be the positive number from (\ref{eqHG1}),
then
\begin{equation}\label{eqHG2}
\int_{0}^{+\infty}\|G(t-\tau)\|d\tau \le \frac{2\mathcal N}{\nu}
\end{equation}
for any $t\in\mathbb R_{+}$, where $\mathcal N$ and $\nu$ there
are two positive number figuring in Definition \ref{defH1} .
\end{lemma}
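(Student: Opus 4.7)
The plan is a direct estimate obtained by splitting the integration interval at the singular point $\tau=t$ of the function $\tau \mapsto |t-\tau|$, applying the bound (\ref{eqHG1}) on each piece, and then doing an elementary change of variable in each piece to reduce things to integrals of $e^{-\nu u}$ on subsets of $[0,+\infty)$.

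More concretely, I would fix $t \in \mathbb R_{+}$ and write
\begin{equation*}
\int_{0}^{+\infty}\|G(t-\tau)\|\,d\tau
 = \int_{0}^{t}\|G(t-\tau)\|\,d\tau
   + \int_{t}^{+\infty}\|G(t-\tau)\|\,d\tau,
\end{equation*}
noting that on $[0,t]$ one has $t-\tau \ge 0$, hence $|t-\tau|=t-\tau$, whereas on $[t,+\infty)$ one has $t-\tau \le 0$, hence $|t-\tau|=\tau-t$. Applying (\ref{eqHG1}) inside each of the two integrals gives
\begin{equation*}
\int_{0}^{+\infty}\|G(t-\tau)\|\,d\tau
 \le \mathcal N\int_{0}^{t} e^{-\nu(t-\tau)}\,d\tau
   + \mathcal N\int_{t}^{+\infty} e^{-\nu(\tau-t)}\,d\tau .
\end{equation*}

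The change of variable $u = t-\tau$ in the first integral turns it into $\mathcal N \int_{0}^{t} e^{-\nu u}\,du = \frac{\mathcal N}{\nu}(1-e^{-\nu t}) \le \frac{\mathcal N}{\nu}$, and the change $u=\tau-t$ in the second integral turns it into $\mathcal N \int_{0}^{+\infty} e^{-\nu u}\,du = \frac{\mathcal N}{\nu}$. Adding the two contributions gives the desired bound $\frac{2\mathcal N}{\nu}$, uniformly in $t \in \mathbb R_{+}$.

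There is essentially no obstacle here: the lemma is a routine convolution-type estimate of an exponentially decaying kernel against the indicator of a half-line, and the only mild point worth mentioning is that we have to use the exponential bound (\ref{eqHG1}) rather than separate estimates for $U(t)\mathcal P$ and $U_{\mathcal Q}(t)$, since the variable $t-\tau$ changes sign inside the integration region. The constant $2$ in front of $\mathcal N/\nu$ (rather than $1$) comes precisely from having to cover both signs of $t-\tau$.
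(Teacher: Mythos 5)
Your proof is correct and follows exactly the paper's own argument: split the integral at $\tau=t$, apply the exponential bound \eqref{eqHG1} on each piece, and evaluate the two elementary exponential integrals to get $\frac{\mathcal N}{\nu}(1-e^{-\nu t})+\frac{\mathcal N}{\nu}\le \frac{2\mathcal N}{\nu}$. No differences worth noting.
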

\begin{proof}
Taking into consideration (\ref{eqHG1}) we obtain
\begin{equation}\label{eqGH3}
\int_{0}^{t}\|G(t-\tau)\|d\tau \le \mathcal N \int_{0}^{t}e^{-\nu
(t-\tau)d\tau}= \frac{\mathcal N}{\nu}(1-e^{-\nu t})
\end{equation}
and
\begin{equation}\label{eqGH4}
\int_{t}^{+\infty}\|G(t-\tau)\|d\tau \le \mathcal N
\int_{t}^{+\infty}e^{\nu (t-\tau)d\tau}= \frac{\mathcal N}{\nu} .
\end{equation}
From (\ref{eqGH3})-(\ref{eqGH4}) we have
\begin{eqnarray}\label{eqGH5}
& \int_{0}^{+\infty}\| G(t-\tau)\|d\tau = \int_{0}^{t}\|
G(t-\tau)\|d\tau +
\int_{t}^{+\infty}\| G(t-\tau)\|d\tau \le \nonumber \\
& \frac{\mathcal N}{\nu}(1-e^{-\nu t}) + \frac{\mathcal N}{\nu}
\le \frac{2\mathcal N}{\nu} \nonumber
\end{eqnarray}
for any $t\in\mathbb R_{+}$. Lemma is proved.
\end{proof}

Denote by $C_{b}(\mathbb R_{+},\mathfrak B)$ the Banach space of
all continuous and bounded mappings $\varphi :\mathbb R_{+}\mapsto
\mathfrak B$ equipped with the norm
$||\varphi||:=\sup\{|\varphi(t)|:\ t\in\mathbb R_{+}\}$.

Let $C_{k}(\mathbb R_{+},\mathfrak B)$ (respectively, $\mathcal L
(\mathbb R_{+},\mathfrak B)$) be the space of all functions
$\varphi \in C(\mathbb R_{+},\mathfrak B)$ such that $\varphi
(\mathbb R_{+})$ is pre-compact in $\mathfrak B$ (respectively,
$\varphi$ is Lagrange stable).

\begin{remark}\label{remLK1}
It easy to check that $C_{k}(\mathbb R_{+},\mathfrak B)$
(respectively, $\mathcal L (\mathbb R_{+},\mathfrak B)$) is a
linear subspace of the Banach space $C(\mathbb R_{+},\mathfrak
B)$.
\end{remark}

\begin{lemma}\label{lLK1} \cite{Sch72} $C_{k}(\mathbb R_{+},\mathfrak B)$ (respectively,
$\mathcal L (\mathbb R_{+},\mathfrak B)$) is a closed subspace of
the space $C(\mathbb R_{+},\mathfrak B)$.
\end{lemma}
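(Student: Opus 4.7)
The plan is to interpret the ambient topology via the sup-norm structure (as Remark~\ref{remLK1} already calls $C(\mathbb R_+,\mathfrak B)$ a Banach space, and since any $\varphi$ with precompact range is automatically bounded, so $C_k(\mathbb R_+,\mathfrak B)\subset C_b(\mathbb R_+,\mathfrak B)$, and likewise $\mathcal L(\mathbb R_+,\mathfrak B)\subset C_k(\mathbb R_+,\mathfrak B)$ by Lemma~\ref{lLS1}). Once convergence is read as uniform convergence on $\mathbb R_+$, both closures are standard total-boundedness arguments. I would treat the two statements in sequence, proving the closure of $C_k$ first and then reducing the $\mathcal L$ case to it.

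For $C_k$, I would take a sequence $\varphi_n\in C_k(\mathbb R_+,\mathfrak B)$ with $\|\varphi_n-\varphi\|\to 0$ and aim to show $\varphi(\mathbb R_+)$ is precompact in the Banach space $\mathfrak B$. Since $\mathfrak B$ is complete, it suffices to prove $\varphi(\mathbb R_+)$ is totally bounded. Given $\eps>0$, choose $N$ with $\|\varphi_N-\varphi\|<\eps/2$; precompactness of $\varphi_N(\mathbb R_+)$ furnishes a finite $\eps/2$-net $\{x_1,\dots,x_k\}$, and then $\{x_1,\dots,x_k\}$ is an $\eps$-net for $\varphi(\mathbb R_+)$ by the triangle inequality. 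That closes $C_k$.

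For $\mathcal L$, I would invoke Lemma~\ref{lLS1} (stated for $\mathbb T$ but equally valid for $\mathbb R_+$) to reduce Lagrange stability to the conjunction of precompactness of the range and uniform continuity of the map. The first condition follows from the previous paragraph applied to the sequence $\varphi_n\in\mathcal L\subset C_k$. The second is the classical $\eps/3$ argument: pick $N$ with $\|\varphi_N-\varphi\|<\eps/3$, choose $\delta>0$ so that $|t-s|<\delta$ implies $|\varphi_N(t)-\varphi_N(s)|<\eps/3$ (uniform continuity of $\varphi_N$), and combine to get $|\varphi(t)-\varphi(s)|<\eps$. Hence $\varphi$ is uniformly continuous with precompact range, i.e., $\varphi\in\mathcal L$.

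There is no substantial obstacle; the argument is routine once the topology is pinned down. The only point that merits care is the interpretation of ``closed subspace of $C(\mathbb R_+,\mathfrak B)$'', since in the compact-open topology neither statement is true (for instance $\varphi_n(t):=\min(t,n)\cdot v$ for fixed nonzero $v\in\mathfrak B$ lies in $C_k$ and converges compact-openly to an unbounded function). The statement is therefore to be read in the Banach-space sense inherited from $C_b(\mathbb R_+,\mathfrak B)$, which is consistent with the way Remark~\ref{remLK1} and the subsequent analysis in the paper use these spaces.
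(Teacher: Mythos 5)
The paper offers no proof of this lemma at all --- it is stated with a bare citation to Shcherbakov's book --- so there is nothing internal to compare your argument against. On its own merits your proof is correct, and the most valuable part of it is the preliminary observation that you must first decide what ``closed in $C(\mathbb R_{+},\mathfrak B)$'' means: as literally written, with the compact-open topology that the paper equips $C(\mathbb T,X)$ with in Section 2, the statement is false, and your counterexample $\varphi_n(t)=\min(t,n)\,v$ does refute it (each $\varphi_n$ has compact range and is Lipschitz, hence lies in $\mathcal L(\mathbb R_{+},\mathfrak B)\subset C_k(\mathbb R_{+},\mathfrak B)$, while the compact-open limit $t\mapsto tv$ has unbounded range). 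Your resolution --- reading the closedness in the sup-norm topology of $C_b(\mathbb R_{+},\mathfrak B)$ --- is the one consistent with Remark~\ref{remLK1} (which calls the ambient space a Banach space), with the parallel Lemma~\ref{l3.4.1} (which explicitly names $C_b(\mathbb R_{+},\mathfrak B)$), and with the contraction argument in Theorem~\ref{th4A}, where completeness of $\mathcal L(\mathbb R_{+},\mathfrak B)$ under $\|\cdot\|$ is what is actually used. Given that reading, the total-boundedness argument for $C_k$ and the $\varepsilon/3$ argument for uniform continuity, combined with the characterization of Lagrange stability in Lemma~\ref{lLS1}, are exactly the standard route and are carried out correctly; the inclusion $\mathcal L\subset C_k\subset C_b$ you use to make sense of the sup norm is also justified correctly.
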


\begin{theorem}\label{thLS11_2}
Let $A$ be an infinitesimal generator which generates a
$C_0$-semigroup $\{U(t)\}_{t\ge 0}$ acting on $\mathfrak B$.
Assume that the semigroup $U(t)$ is hyperbolic. Then the following
statements hold:
\begin{enumerate}
\item for every $f\in C_{b}(\mathbb R_{+},\mathbb B)$ there exists
a unique solution $\varphi \in C_{b}(\mathbb R_{+},\mathfrak B)$
of equation \eqref{eqLN1} with $\mathcal P\varphi(0)=0$ given by
the formula
\begin{equation}\label{eqD3}
\varphi(t) =\int_{0}^{+\infty}G(t-\tau)f(\tau)d\tau ;\nonumber
\end{equation}
\item the Green's operator $\mathbb G$ defined by
\begin{equation*}\label{eqG10}
\mathbb G(f)(t):=\int_{0}^{+\infty}G(t-\tau)f(\tau)d\tau
\end{equation*}
is a bounded operator defined on $C_b(\mathbb R_{+},\mathfrak B)$
with values in $C_b(\mathbb R_{+},\mathfrak B)$ and
\begin{equation}\label{Ge}
||\mathbb G(f)||\le \frac{2\mathcal N}{\nu} ||f|| \nonumber
\end{equation}
for any $f\in C_b(\mathbb R_{+}, \mathfrak B)$;

\item if $f\in C_{b}(\mathbb R_{+},\mathfrak B)$ and $l>L>0$, then
\begin{eqnarray}\label{eqL_1}
\max\limits_{0\le t\le L}|\varphi(t)|\le \max\limits_{0\le \tau
\le l}|f(\tau)|\max\limits_{0\le t\le L}\omega_{l}(t)+
\frac{\mathcal N}{\nu}\|f\| e^{\nu (L-l)},
\end{eqnarray}
where
\begin{equation}\label{eqBL02}
\omega_{l}(t):=\frac{\mathcal N}{\nu}(2-e^{-\nu t} -e^{\nu
(t-l)});\nonumber
\end{equation}
\item if $f\in C_{b}(\mathbb R_{+},\mathfrak B)$, then the
solution $\varphi = \mathbb G(f)$ of equation \eqref{eqLN1} is
compatible in the limit.
\end{enumerate}
\end{theorem}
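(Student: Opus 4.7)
The proof splits into four parts, matching the four assertions.

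Part (i) requires verifying the formula defines a bounded mild solution and proving uniqueness. The bound $\|G(s)\|\le\mathcal N e^{-\nu|s|}$ from Remark \ref{remG1} gives absolute convergence of $\varphi(t)=\int_0^{+\infty}G(t-\tau)f(\tau)\,d\tau$. Splitting via the definition of $G$,
\begin{equation*}
\varphi(t)=\int_0^t U(t-\tau)\mathcal P f(\tau)\,d\tau-\int_t^{+\infty}U_{\mathcal Q}(t-\tau)\mathcal Q f(\tau)\,d\tau,
\end{equation*}
I will check that the $\mathcal P$-part and $\mathcal Q$-part individually solve the mild-solution formulas on the invariant subspaces, using the commutation $U(t)\mathcal P=\mathcal P U(t)$ from Definition \ref{defH1} and the invertibility of $U(t)$ on $\mathrm{Im}\,\mathcal Q$. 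At $t=0$ the first integral vanishes, so $\varphi(0)\in\mathrm{Im}\,\mathcal Q$ and hence $\mathcal P\varphi(0)=0$. For uniqueness, the difference $z$ of two such solutions satisfies $z(t)=U(t)z(0)$ with $z(0)\in\mathrm{Im}\,\mathcal Q$; inverting the hyperbolic bound $|U_{\mathcal Q}(-t)y|\le\mathcal N e^{-\nu t}|y|$ gives $|U(t)y|\ge\mathcal N^{-1}e^{\nu t}|y|$ on $\mathrm{Im}\,\mathcal Q$, forcing $z(0)=0$ by boundedness of $z$. Part (ii) then follows directly from Lemma \ref{lGH1}: $|\mathbb G(f)(t)|\le\|f\|\int_0^{+\infty}\|G(t-\tau)\|\,d\tau\le(2\mathcal N/\nu)\|f\|$.

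For part (iii), the strategy is to split the integral defining $\varphi$ at $\tau=l$. On the near piece $[0,l]$ I factor out $\max_{0\le\tau\le l}|f(\tau)|$; the remaining integral evaluates to
\begin{equation*}
\int_0^t\mathcal N e^{-\nu(t-\tau)}\,d\tau+\int_t^l\mathcal N e^{\nu(t-\tau)}\,d\tau=(\mathcal N/\nu)(1-e^{-\nu t})+(\mathcal N/\nu)(1-e^{\nu(t-l)})=\omega_l(t).
\end{equation*}
On the far piece $[l,+\infty)$ I use $\|f\|$ as a uniform bound together with $\|G(t-\tau)\|\le\mathcal N e^{\nu(t-\tau)}$ for $\tau>t$, producing $(\mathcal N/\nu)\|f\|e^{\nu(t-l)}\le(\mathcal N/\nu)\|f\|e^{\nu(L-l)}$ whenever $t\le L<l$. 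Summing the two contributions yields \eqref{eqL_1}.

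For part (iv), given a sequence $\{t_n\}\in\mathfrak L_f$ the goal is to show $\{\varphi^{t_n}\}$ converges in $C(\mathbb R_+,\mathfrak B)$. The change of variables $\tau=s+t_n$ in the Green representation, together with $G(r)=U(r)\mathcal P$ for $r\ge 0$, produces the identity
\begin{equation*}
\varphi^{t_n}(t)=U(t)\mathcal P\varphi(t_n)+\mathbb G(f^{t_n})(t).
\end{equation*}
The second term is Cauchy in the compact-open topology: applying estimate (iii) to $\mathbb G(f^{t_n}-f^{t_m})$, I first choose $l$ large enough that the tail $(\mathcal N/\nu)\cdot 2\|f\|\cdot e^{\nu(L-l)}$ falls below $\varepsilon/2$, and then invoke the Cauchy property of $\{f^{t_n}\}$ on $[0,l]$ to bound the leading term by $\varepsilon/2$. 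The main obstacle is the residual exponentially-decaying transient $U(t)\mathcal P\varphi(t_n)$; its compact-open convergence reduces to showing $\{\mathcal P\varphi(t_n)\}$ is Cauchy in $\mathfrak B$, which I plan to handle by writing $\mathcal P\varphi(t_n)=\int_0^{t_n}U(u)\mathcal P f(t_n-u)\,du$, truncating at large $N$ (with tail $O(e^{-\nu N})$ uniformly in $n$), and linking the convergence of the truncated principal part to the translation data of $\{f^{t_n}\}$. This linking, tying forward-time convergence on $\mathbb R_+$ to convergence behavior going backwards in time, is the delicate analytic point of the argument.
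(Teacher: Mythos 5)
Parts (i)--(iii) of your proposal are correct and follow the paper's own route: the paper likewise obtains (ii) from Lemma \ref{lGH1} and (iii) by splitting the Green integral at $\tau=l$ (it delegates the verification in (i) to the literature; your direct check of the mild-solution property and of uniqueness via the exponential expansion of $U(t)$ on $\mathrm{Im}\,\mathcal Q$ is a harmless replacement).

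The gap is in part (iv). Your identity $\varphi^{t_n}(t)=U(t)\mathcal P\varphi(t_n)+\mathbb G(f^{t_n})(t)$ is correct, and your handling of $\mathbb G(f^{t_n})$ via estimate (iii) with a cut at large $l$ is exactly the paper's argument (the paper in fact works only with $\varphi_n:=\mathbb G(f^{t_n})$, proves $\mathbb G(f^{t_n})\to\mathbb G(\tilde f)$, and then identifies $\varphi^{t_n}$ with $\varphi_n$ --- i.e.\ it silently discards the very transient you isolate). But the step you defer, the convergence of $\{\mathcal P\varphi(t_n)\}$, is not a routine truncation and cannot be completed from the stated hypothesis. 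After truncating, the principal part is $\int_0^N U(u)\mathcal P f(t_n-u)\,du$, which depends on $f(t_n-u)$ for $u\in(0,N]$, that is, on $f^{t_n}$ at \emph{negative} arguments; membership of $\{t_n\}$ in $\mathfrak L_f$ asserts convergence of $f^{t_n}$ only in $C(\mathbb R_+,\mathfrak B)$ and so controls $f(t_n+s)$ solely for $s\ge 0$. Concretely, take $\mathfrak B=\mathbb R$, $A=-1$ (so $\mathcal P=I$ and $G(s)=e^{-s}$ for $s\ge 0$, $G(s)=0$ for $s<0$), and let $f$ be a sum of unit bumps of alternating sign supported on the intervals $[a_n-2,\,a_n-\tfrac12]$ with $a_n\to+\infty$ rapidly: then $f^{a_n}\to 0$ in $C(\mathbb R_+,\mathbb R)$, so $\{a_n\}\in\mathfrak L_f$, while $\mathcal P\varphi(a_n)=\int_0^{a_n}e^{-u}f(a_n-u)\,du$ oscillates and does not converge, so $\{a_n\}\notin\mathfrak L_\varphi$. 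Hence the transient term is a genuine obstruction, not merely a ``delicate analytic point'': your plan of ``tying forward-time convergence to backward-time behaviour'' has no input to work with unless the hypothesis is strengthened (for instance by reading $\mathfrak L_f$ as convergence of the extended translates in $C(\mathbb R,\mathfrak B)$, as in Lemma \ref{lLP1}, whose limit is the two-sided integral $\int_{-\infty}^{+\infty}G(t-\tau)\tilde f(\tau)\,d\tau$ rather than $\mathbb G(\tilde f)$). As written, your part (iv) is a correct reduction to an unproved claim, and that claim fails under the literal hypotheses.
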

\begin{proof}
(i)-(ii). It is easy to check (see \cite[Ch.II]{Dal} for bounded
operators $\mathcal A$ and \cite{Bas_1997,MRS_1997} for unbounded
operators $\mathcal A$) that the function $\varphi$ given by
\eqref{eqD3} is a solution of equation \eqref{eqLN1}.

Now we will show the boundedness of $\varphi =\mathbb G(f)$. Note
that
\begin{equation*}\label{eqD4}
\varphi(t):= \int _{0}^{+\infty}G(t-\tau)f(\tau)d\tau
\end{equation*}
and we have
\begin{eqnarray}\label{eqB19}
& |\varphi(t)|  =|\int_{0}^{+\infty} G(t-\tau)f(\tau)d\tau | \le
\int_{0}^{+\infty}||G(t-\tau)|||f(\tau)|d\tau \le \nonumber
 \\
& \le  \|f\|\int_{0}^{+\infty} ||G(t-\tau)||d\tau .
\end{eqnarray}
From (\ref{eqB19}) and Lemma \ref{lGH1} it follows
\begin{equation}\label{eqE1}
\|\varphi \|\le 2\frac{\mathcal N}{\nu}\|f\|.\nonumber
\end{equation}

(iii).  Let $L>0$, $t\in [0,L]$, $l>L$ and $f\in C_{b}(\mathbb
R_{+},\mathfrak B)$, then from (\ref{eqB19}) we have
\begin{align}\label{eqBL1}
|\varphi(t)|& \le  \int_{0}^{+\infty}\|G(t-\tau)\| |f(\tau)|d\tau
.
\end{align}
Note that
\begin{eqnarray}\label{eqBL2}
& \int_{0}^{+\infty} \|G(t-\tau)\| |f(\tau)|d\tau =\\
& \int_{0}^{l}\|G(t-\tau)\| |f(\tau)|d\tau +
\int_{l}^{+\infty}\|G(t-\tau)\| |f(\tau)|d\tau ,\nonumber
\end{eqnarray}
\begin{eqnarray}\label{eqBL3}
& \int_{0}^{l}\|G(t-\tau)\| |f(\tau)|d\tau \le \max\limits_{0\le
\tau \le l}|f(\tau)| (\int_{0}^{t}\|G(t-\tau)\| d\tau +
\int_{t}^{l}\|G(t-\tau)\|d\tau) \le \nonumber \\ & \max\limits_{0
\le \tau \le l}|f(\tau)|(\mathcal N \int_{0}^{t}e^{-\nu
(t-\tau)}d\tau + \mathcal N \int_{t}^{l}e^{\nu(t-\tau)}d\tau) =\nonumber \\
&
 \max\limits_{0\le
\tau \le l}|f(\tau)| \frac{\mathcal N}{\nu}(2-e^{-\nu t} -e^{\nu
(t-l)})\nonumber
\end{eqnarray}
and
\begin{align}\label{eqBL4}
\int_{l}^{+\infty}\|G(t-\tau)\| |f(\tau)|d\tau \le \mathcal N
\|f\| \int_{l}^{+\infty}e^{\nu (t-\tau)}d\tau = \frac{\mathcal
N}{\nu}\|f\| e^{\nu (t-l)} .
\end{align}

From (\ref{eqBL1})-(\ref{eqBL4}) we obtain
\begin{eqnarray}\label{eqBL5}
|\varphi(t)|\le \max\limits_{0\le \tau \le l}|f(\tau)|
\frac{\mathcal N}{\nu}(2-e^{-\nu t} -e^{\nu (t-l)}) +
\frac{\mathcal N}{\nu}\|f\| e^{\nu (t-l)}\nonumber
\end{eqnarray}
and, consequently,
\begin{eqnarray}\label{eqBL6}
\max\limits_{0\le t\le L}|\varphi(t)|\le \max\limits_{0\le \tau
\le l}|f(\tau)|\max\limits_{0\le t\le L}\omega_{l}(t)+
\frac{\mathcal N}{\nu}\|f\| e^{\nu (L-l)},\nonumber
\end{eqnarray}
where
\begin{equation}\label{eqBL07}
\omega_{l}(t):=\frac{\mathcal N}{\nu}(2-e^{-\nu t} -e^{\nu (t-l)})
.\nonumber
\end{equation}
Thus inequality (\ref{eqL_1}) is established.

(iv). Let now $\{t_n\}\in \mathfrak L_{f}$, then there exists
$\tilde{f}\in \omega_{f}:=\{g\in C(\mathbb T,\mathfrak B)|\ $
there exists a sequence $\{t_n\}\in \mathfrak L_{f}$ such that
$f^{t_n}\to g$ in $C(\mathbb T,X)$ as $n\to \infty$ $\}$ such that
$f^{t_n}\to \tilde{f}$ in the space $C(\mathbb R_{+},\mathfrak
B)$, that is, for any $L>0$ we have
\begin{eqnarray}\label{eqD6}
& \max\limits_{|t|\le L}|| f(t+t_n)-\tilde{f}(t)||\to 0\nonumber
\end{eqnarray}
as $n\to \infty$.

Denote by $h_{n}(t):=f^{t_n}(t)-\tilde{f}(t)$ for any $t\in\mathbb
R_{+}$, $\varphi_{n}:=\mathbb G(f^{t_n})$,
$\tilde{\varphi}:=\mathbb G(\tilde{f})$ and
$\psi_{n}:=\varphi_{n}-\tilde{\varphi}$. Note that $\psi_n\in
C_{b}(\mathbb R_{+},$ $\mathfrak B)$ is a solution of equation
\begin{equation}\label{eqBL6.0}
x'(t)=Ax(t)+h_{n}(t))\nonumber
\end{equation}
and, consequently, $\psi_{n}=\mathbb G(\tilde{h}_{n})$, $h_{n}\in
C_{b}(\mathbb R_{+},\mathfrak B)$. Since
\begin{eqnarray}\label{eqBL6_01}
 ||\varphi_{n}||=||\mathbb G(f^{t_n})||\le
\frac{2\mathcal N}{\nu}||f^{t_n}||\le \frac{2\mathcal
N}{\nu}||f||:=C\nonumber
\end{eqnarray}
and
\begin{equation}\label{eqBL6_02}
\|h_{n}\|=\|f^{t_{n}}-\tilde{f}\|\le \|f^{t_{n}}\| +
\|\tilde{f}\|\le 2\|f\| \nonumber
\end{equation}
for any $n\in\mathbb N$.

Let now $\{l_n\}$ be a sequence of positive numbers such that
$l_n\to \infty$ as $n\to \infty$. According to inequality
(\ref{eqL_1}) we obtain
\begin{eqnarray}\label{eqBL6_03}
&\max\limits_{0\le t\le L}|\varphi_{n}(t)|\le \max\limits_{0\le
\tau \le l_{n}}|h_{n}(\tau)|\max\limits_{0\le t\le
L}\omega_{l_{n}}(t)+ \frac{\mathcal N}{\nu}\|f\| e^{\nu
(L-l_{n})}\le \nonumber \\
& \frac{2\mathcal N}{\nu}\max\limits_{0\le \tau \le
l_{n}}|h_{n}(\tau)| + \frac{\mathcal N}{\nu}\|f\| e^{\nu
(L-l_{n})}
\end{eqnarray}
because
\begin{equation}\label{eqBL7}
\omega_{l_{n}}(t)=\frac{\mathcal N}{\nu}(2-e^{-\nu t} -e^{\nu
(t-l_{n})})\le \frac{2\mathcal N}{\nu}
\end{equation}
for any $t\in [0,L]$ and $n\in \mathbb N$.

Passing to the limit in (\ref{eqBL6_03}) and taking into
consideration that $\max\limits_{0\le \tau \le
l_{n}}|h_{n}(\tau)|\to 0$ as $n\to \infty$ (see Remark
\ref{remD1}, item 2) and (\ref{eqBL7}) we obtain
\begin{equation}\label{eqBL8}
\lim\limits_{n\to \infty}\max\limits_{0\le t\le
L}|\varphi_{n}(t)|=0 \nonumber
\end{equation}
for any $L>0$, i.e., $\varphi^{t_n}\to \tilde{\varphi}$ in the
space $C(\mathbb R_{+},\mathfrak B)$ as $n\to \infty$. This means
that $\mathfrak L_{f}\subseteq \mathfrak L_{\varphi}$. The proof
is complete.
\end{proof}

\begin{coro}\label{cor 10}
Under the conditions of \textsl{Theorem} \ref{thLS11_2} if the
function $f\in$ $ C_{b}(\mathbb R_{+},$ $\mathfrak B)$ is
asymptotically stationary (respectively, asymptotically
$\tau$-periodic, asymptotically quasi-periodic with the spectrum
of frequencies $\nu_1,\ldots,\nu_k$, asymptotically Bohr almost
periodic, asymptotically Bohr almost automorphic, asymptotically
Birkhoff recurrent, Lagrange stable), then equation (\ref{eqLN1})
has a unique solution $\varphi \in C_{b}(\mathbb R_{+},\mathfrak
B)$ with $\mathcal P \varphi(0)=0$ which is asymptotically
stationary (respectively, asymptotically $\tau$-periodic,
asymptotically quasi-periodic with the spectrum of frequencies
$\nu_1,\ldots,\nu_k$, asymptotically Bohr almost periodic,
asymptotically Bohr almost automorphic, asymptotically Birkhoff
recurrent, Lagrange stable).
\end{coro}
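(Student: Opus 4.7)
The proof should be essentially a direct combination of Theorem~\ref{thLS11_2}(iv) with the general comparability result Theorem~\ref{th2}. The plan is to observe that part (iv) of the preceding theorem already does all the analytic work: it asserts $\mathfrak{L}_f \subseteq \mathfrak{L}_\varphi$ for $\varphi = \mathbb{G}(f)$, which is exactly the definition of $\varphi$ being comparable by character of recurrence in the limit with $f$. Once that inclusion is in hand, the corollary reduces to a purely dynamical/structural transfer statement, and this is precisely the content of Theorem~\ref{th2}.

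In more detail, I would proceed as follows. First, I would invoke Theorem~\ref{thLS11_2}(i) to produce the unique solution $\varphi \in C_b(\mathbb{R}_+,\mathfrak{B})$ of \eqref{eqLN1} satisfying $\mathcal{P}\varphi(0)=0$, namely $\varphi = \mathbb{G}(f)$. Next, I would apply Theorem~\ref{thLS11_2}(iv) to obtain that this $\varphi$ is compatible in the limit with $f$, i.e.\ $\mathfrak{L}_f \subseteq \mathfrak{L}_\varphi$, which by definition means $\varphi$ is comparable by character of recurrence in the limit with $f$. At this point the hypotheses of Theorem~\ref{th2} are satisfied with $X=Y=\mathfrak{B}$ (taking the role of $\psi$ to be $f$).

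Finally, I would conclude by applying Theorem~\ref{th2} case by case: since $f$ is assumed to belong to one of the asymptotic recurrence classes (asymptotically stationary, asymptotically $\tau$-periodic, asymptotically quasi-periodic with spectrum $\nu_1,\ldots,\nu_k$, asymptotically Bohr almost periodic, asymptotically Bohr almost automorphic, asymptotically Birkhoff recurrent, or Lagrange stable), Theorem~\ref{th2} transfers exactly that property to $\varphi$. The uniqueness of $\varphi$ has already been secured by Theorem~\ref{thLS11_2}(i).

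There is essentially no obstacle here: all the nontrivial work (boundedness of $\mathbb{G}$, the quantitative estimate \eqref{eqL_1}, and the passage to the limit establishing $\mathfrak{L}_f \subseteq \mathfrak{L}_\varphi$) has already been carried out in Theorem~\ref{thLS11_2}, and the classification theorem for comparable-in-the-limit motions is imported as Theorem~\ref{th2}. The only point requiring a brief remark is that the list of asymptotic recurrence classes in the statement of the corollary matches the list in Theorem~\ref{th2} (with ``Lagrange stable'' on $\mathbb{R}_+$ corresponding to ``positively Lagrange stable''), so the transfer is uniform across all the enumerated cases and the proof is just two sentences long.
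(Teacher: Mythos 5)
Your proposal is correct and follows exactly the paper's own argument: the paper's proof is the one-line remark that the corollary ``follows from Theorems \ref{th2} and \ref{thLS11_2}'', and you have simply spelled out the same two-step combination (existence/uniqueness and compatibility in the limit from Theorem \ref{thLS11_2}, then the transfer of asymptotic recurrence properties via Theorem \ref{th2}). Nothing is missing.
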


\begin{proof}
This statement follows from Theorems \ref{th2} and \ref{thLS11_2}.
\end{proof}

\begin{remark}\label{remLP1}  1. Let $\varphi \in C(\mathbb R_{+},\mathfrak
B)$ and we denote by $\phi_{\varphi}\in C(\mathbb R,\mathfrak B)$
a function defined as follows: $\phi_{\varphi}(t)=\varphi(t)$ for
any $t\in \mathbb R_{+}$ and $\phi_{\varphi}(t)=\varphi(0)$ for
any $t\in \mathbb R_{-}:=(-\infty,0]$.

2. The space $C(\mathbb R_{+},\mathfrak B)$ (respectively,
$C_{b}(\mathbb R_{+},\mathfrak B)$, $C_{k}(\mathbb R_{+},\mathfrak
B)$ or $\mathfrak L(\mathbb R_{+},\mathfrak B)$) can be embedded
continuously in the space $C(\mathbb R,\mathfrak B)$
(respectively, $C_{b}(\mathbb R,\mathfrak B)$, $C_{k}(\mathbb
R,\mathfrak B)$ or $\mathfrak L(\mathbb R,\mathfrak B)$) as
follows: $\varphi \to \phi_{\varphi}$.
\end{remark}

\begin{lemma}\label{lLP01}
If $\varphi \in C(\mathbb R_{+},\mathfrak B)$, $t_n\to +\infty$
and $\varphi^{t_n}\to \tilde{\varphi}$ in $C(\mathbb
R_{+},\mathfrak B)$ as $n\to \infty$, then the following
statements hold:
\begin{enumerate}
\item $\tilde{\varphi}\in C(\mathbb R,\mathfrak B)$; \item if
$\varphi \in C_{b}(\mathbb R_{+},\mathfrak B)$, then
$\tilde{\varphi}\in C_{b}(\mathbb R,\mathfrak B)$ and
$|\tilde{\varphi}(t)|\le \|\varphi\|:=\sup\limits_{t\in \mathbb
R_{+}}|\varphi(t)|$ for any $t\in \mathbb R$;\item if
$Q:=\overline{\varphi(\mathbb R_{+})}$ is a compact subset of
$\mathfrak B$, then $\tilde{\varphi}(\mathbb R)\subseteq Q$; \item
if $\varphi \in \mathcal L(\mathbb R_{+},\mathfrak B)$, then
$\Sigma_{\varphi}^{+}:=\{\sigma(\tau,\varphi)|\ \tau \ge 0\}$ is a
pre-compact subset of $C(\mathbb R,\mathfrak B)$.
\end{enumerate}
\end{lemma}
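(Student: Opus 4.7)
The plan is to exploit the hypothesis $t_n\to+\infty$ to translate the convergence on $\mathbb R_+$ into a genuine statement on all of $\mathbb R$. For any compact $K\subset\mathbb R$, the inclusion $K\subset[-t_n,\infty)$ holds for all $n$ large enough, so $\varphi^{t_n}(t)=\varphi(t+t_n)$ is well-defined for $t\in K$ once $n$ is sufficiently large. Under this interpretation (consistent with Remark \ref{remLP1} and the definition of $\mathfrak L_\varphi$), the convergence hypothesis produces $\tilde\varphi$ as the uniform-on-compacts limit of the family $\{\varphi^{t_n}\}$ on $\mathbb R$.

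For (i), each tail $\varphi^{t_n}\big|_K$ is continuous, and since uniform convergence on every compact $K\subset\mathbb R$ preserves continuity, $\tilde\varphi\in C(\mathbb R,\mathfrak B)$. For (ii) and (iii) I would fix $t\in\mathbb R$ and observe that $\varphi^{t_n}(t)=\varphi(t+t_n)$ for all $n$ large enough: in the bounded case this lies in the closed ball of radius $\|\varphi\|$, and in the pre-compact case it lies in the closed set $Q$. Passing to the limit as $n\to\infty$ then yields $|\tilde\varphi(t)|\le\|\varphi\|$ and $\tilde\varphi(t)\in Q$, respectively.

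The real content is in (iv). Given a sequence $\{\tau_n\}\subset\mathbb R_+$, I would split into two cases. If $\{\tau_n\}$ is bounded, extract a subsequence $\tau_{n_k}\to\tau^\ast\ge 0$ and invoke continuity of the shift dynamical system to conclude $\varphi^{\tau_{n_k}}\to\varphi^{\tau^\ast}$ in $C(\mathbb R_+,\mathfrak B)$, hence (after trivial extension) in $C(\mathbb R,\mathfrak B)$. If $\{\tau_n\}$ is unbounded, extract $\tau_{n_k}\to+\infty$ and appeal to Lemma \ref{lLS1}: Lagrange stability of $\varphi$ gives that $\varphi(\mathbb R_+)$ is pre-compact in $\mathfrak B$ and $\varphi$ is uniformly continuous on $\mathbb R_+$. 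On each fixed interval $[-L,L]$ and for all $k$ with $\tau_{n_k}>L$, uniform continuity of $\varphi$ transfers to equicontinuity of $\{\varphi^{\tau_{n_k}}\}$, while pre-compactness of $\varphi(\mathbb R_+)$ gives pointwise pre-compactness. The Ascoli--Arzel\`a theorem produces a subsequence converging uniformly on $[-L,L]$, and a standard diagonal extraction over $L=1,2,\ldots$ yields a sub-subsequence converging in $C(\mathbb R,\mathfrak B)$.

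The main obstacle is really just bookkeeping of domains: each $\varphi^{\tau_n}$ is only naturally defined on $[-\tau_n,\infty)$, and one has to check that the Ascoli--Arzel\`a hypotheses make sense on compacts $[-L,L]$ which are only eventually contained in the domain. No deep idea is needed beyond the observation that in the unbounded case this eventual containment is precisely what $\tau_{n_k}\to+\infty$ provides, while in the bounded case the problem reduces to the standard continuity of the Bebutov shift already recorded in the preliminaries.
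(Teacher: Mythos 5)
Your proof is correct and follows essentially the same route as the paper's: statements (i)--(iii) are treated as immediate consequences of uniform convergence on compacts, and (iv) is obtained by reducing to the case $\tau_{n_k}\to+\infty$ and applying Ascoli--Arzel\`a on the segments $[-L,L]$, with equicontinuity and pointwise pre-compactness supplied by Lemma \ref{lLS1}. The only cosmetic difference is that you handle the bounded-$\{\tau_n\}$ case explicitly via continuity of the Bebutov shift, where the paper absorbs it into a ``without loss of generality'' reduction.
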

\begin{proof} The first three statements are evident. To prove the
last statement without loss of generality we can suppose that
$\varphi^{t_n}\in C(\mathbb R,\mathfrak B)$
($\varphi^{t_n}(t):=\varphi(t_n)$ for any $t\le -t_{n}$). Since
the function $\varphi \in \mathcal L(\mathbb R_{+},\mathfrak B)$,
then by Lemma \ref{lLS1} it is uniformly continuous on $\mathbb
R_{+}$ and $Q=\overline{\varphi(\mathbb R_{+})}$ is a compact
subset of $\mathfrak B$. Let $\{t_n\}\subset \mathbb R_{+}$. Since
$\varphi \in \mathcal L (\mathbb R_{+},\mathfrak B)$, then without
loss of generality we can suppose that the sequence $\{t_n\}$
converges to $+\infty$ as $n\to \infty$ and $\{\varphi^{t_n}\}$
converges in $C(\mathbb R_{+},\mathfrak B)$. If we consider
$\{\varphi^{t_n}\}$ as a sequence from $C(\mathbb R,\mathfrak B)$,
then it is possible to check that the sequence $\{\varphi^{t_n}\}$
is equi-continuous on every segment $[-l,l]$ ($l\in \mathbb R$)
and $\varphi^{t_n}(\mathbb R)\subseteq Q=\overline{\varphi(\mathbb
R_{+})}$ and, consequently, the sequence $\{\varphi^{t_n}\}$ is
pre-compact in $C(\mathbb R,\mathfrak B)$. Since the sequence
$\{\varphi^{t_n}\}$ converges in $C(\mathbb R_{+},\mathfrak B)$,
then it is easy to see that the sequence $\{\varphi^{t_n}\}$ has
at most one (in fact exactly one) limiting point. This means that
the sequence $\{\varphi^{t_n}\}$ converges in the space $C(\mathbb
R,\mathfrak B)$ and, consequently, $\Sigma_{\varphi}^{+}$ is a
pre-compact subset of $C(\mathbb R,\mathfrak B)$.

\end{proof}

\begin{theorem}\label{thLS11_3}
Let $A$ be an infinitesimal generator which generates a
$C_0$-semigroup $\{U(t)\}_{t\ge 0}$ acting on $\mathfrak B$.
Assume that the semigroup $U(t)$ is hyperbolic. Then the following
statements hold:
\begin{enumerate}
\item for every $f\in C_{b}(\mathbb R,\mathbb B)$ there exists a
unique solution $\varphi \in C_{b}(\mathbb R_{+},\mathfrak B)$ of
equation \eqref{eqLN1} given by the formula
\begin{equation}\label{eqD03}
\varphi(t) =\int_{-\infty}^{+\infty}G(t-\tau)f(\tau)d\tau
;\nonumber
\end{equation}
\item the Green's operator $\mathbb G$ defined by
\begin{equation*}\label{eqG_10}
\mathbb G(f)(t):=\int_{-\infty}^{+\infty}G(t-\tau)f(\tau)d\tau
\end{equation*}
is a bounded operator defined on $C_b(\mathbb R,\mathfrak B)$ with
values in $C_b(\mathbb R,\mathfrak B)$ and
\begin{equation}\label{eqG_11}
||\mathbb G(f)||_{\infty}\le \frac{2\mathcal N}{\nu} ||f||
\nonumber
\end{equation}
for any $f\in C_b(\mathbb R, \mathfrak B)$;

\item if $f\in C_{b}(\mathbb R,\mathfrak B)$ and $l>L>0$, then
\begin{equation}\label{eqL_01}
 \max\limits_{|t| \le L}|\varphi(t)|\le \frac{\mathcal
N}{\nu}\big{(} \max\limits_{ |\tau| \le l}|f(\tau)|\max\limits_{
|t| \le L}(2-\omega_{l}(t))+\|f\|\max\limits_{ |t| \le
L}\omega_{l}(t)\big{)} ,\nonumber
\end{equation}
where
\begin{equation}\label{eqBL_02}
 \omega_{l}(t):=e^{-\nu (t-l)}+e^{\nu ((t+l)}\nonumber .
\end{equation}
\end{enumerate}
\end{theorem}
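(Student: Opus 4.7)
The plan is to parallel the proof of Theorem \ref{thLS11_2}, replacing the one-sided Green's integral by the bilateral one. The key ingredient throughout is the exponential estimate $\|G(s)\| \le \mathcal{N}e^{-\nu|s|}$ from Remark \ref{remG1}; integrating it over all of $\mathbb R$ gives $\int_{-\infty}^{+\infty}\|G(t-\tau)\|\,d\tau \le 2\mathcal N/\nu$, the bilateral analogue of Lemma \ref{lGH1}.

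For parts (i)-(ii), I would first verify absolute convergence and the uniform bound
$$|\varphi(t)| \le \|f\|\int_{-\infty}^{+\infty}\mathcal{N}e^{-\nu|t-\tau|}\,d\tau = \frac{2\mathcal{N}}{\nu}\|f\|,$$
which simultaneously yields boundedness of $\mathbb G$ and the norm inequality. Continuity of $\varphi$ follows by standard dominated convergence using the same exponential dominant. To see that $\varphi$ is a mild solution of \eqref{eqLN1}, I would split the integral at $\tau=t$ into the stable part $\int_{-\infty}^{t} U(t-\tau)\mathcal P f(\tau)\,d\tau$ and the unstable part $-\int_{t}^{+\infty} U_{\mathcal Q}(t-\tau)f(\tau)\,d\tau$ and differentiate as in \cite{Dal,Bas_1997,MRS_1997}. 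Uniqueness is a consequence of hyperbolicity: the difference $w$ of two bounded mild solutions satisfies the homogeneous equation on $\mathbb R$, so $\mathcal P w(t)$ decays forward and $\mathcal Q w(t)$ decays backward; being bounded throughout, each branch must vanish, giving $w\equiv 0$.

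For part (iii), fix $L>0$, $l>L$, and $|t|\le L$, and split
$$|\varphi(t)| \le \int_{-l}^{t}\|G(t-\tau)\| |f(\tau)|\,d\tau + \int_{t}^{l}\|G(t-\tau)\| |f(\tau)|\,d\tau + \int_{|\tau|>l}\|G(t-\tau)\| |f(\tau)|\,d\tau.$$
On the first two intervals the sign of $t-\tau$ is fixed, so $\|G(t-\tau)\| \le \mathcal N e^{-\nu(t-\tau)}$ on $[-l,t]$ and $\|G(t-\tau)\| \le \mathcal N e^{\nu(t-\tau)}$ on $[t,l]$; bounding $|f|$ there by $\max_{|\tau|\le l}|f(\tau)|$ and evaluating the elementary integrals explicitly contributes $\max_{|\tau|\le l}|f(\tau)|\cdot\frac{\mathcal N}{\nu}\bigl(2-e^{-\nu(t+l)}-e^{\nu(t-l)}\bigr)$. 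On the tails I would bound $|f|$ by $\|f\|$ and directly integrate the exponential decay, obtaining $\|f\|\cdot\frac{\mathcal N}{\nu}\bigl(e^{-\nu(t+l)}+e^{\nu(t-l)}\bigr)$. Setting $\omega_l(t):=e^{-\nu(t+l)}+e^{\nu(t-l)}$ and taking $\max_{|t|\le L}$ of the sum produces the stated inequality, with the two prefactors $\frac{\mathcal N}{\nu}(2-\omega_l(t))$ and $\frac{\mathcal N}{\nu}\omega_l(t)$ as required.

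The main obstacle is the bookkeeping in (iii): one must split the domain at the correct breakpoints $\pm l$ and $t$, keep the two branches of $G$ straight by watching the sign of $t-\tau$, and reassemble the exponentials into the target combination of $\omega_l$ and $2-\omega_l$. In particular the function $\omega_l$ displayed just below \eqref{eqL_01} appears to contain a typographical slip and should read $\omega_l(t)=e^{-\nu(t+l)}+e^{\nu(t-l)}$ for the estimate to be internally consistent (both terms then decay as $l\to\infty$ uniformly in $|t|\le L$, which is what drives the analogue of the compatibility argument in Theorem \ref{thLS11_2}). Parts (i)-(ii) are otherwise routine bilateral translations, simplified by the absence of the boundary constraint $\mathcal P\varphi(0)=0$.
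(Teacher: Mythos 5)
Your proposal is correct and follows exactly the route the paper intends: the paper's entire proof of this theorem is the single remark that it ``can be proved analogically as Theorem \ref{thLS11_2} (with slight modifications),'' and you have carried out precisely that bilateral adaptation, including the two-sided analogue of Lemma \ref{lGH1} and the splitting at $\pm l$ and $t$ for part (iii). Your observation that the displayed $\omega_{l}$ must read $e^{-\nu(t+l)}+e^{\nu(t-l)}$ (so that both terms decay as $l\to\infty$) is a correct repair of a typographical error in the statement.
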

\begin{proof} This statement can be proved analogically as Theorem \ref{thLS11_2} (with slight
modifications).
\end{proof}

\begin{lemma}\label{lLP1}
Let $A$ be an infinitesimal generator which generates a
$C_0$-semigroup $\{U(t)\}_{t\ge 0}$ acting on $\mathfrak B$.
Assume that
\begin{enumerate}
\item the semigroup $U(t)$ is hyperbolic; \item $\{f_n\}\subset
C_{b}(\mathbb R_{+},\mathfrak B)$; \item there exists $C\in
\mathbb R_{+}$ such that $\|f_n\|\le C$ for any $n\in \mathbb N$;
\item $\{f_n\}$ converges to $f$ in $C(\mathbb R_{+},\mathfrak B)$
as $n\to \infty$; \item $f\in C_{b}(\mathbb R_{+},\mathfrak B)$
and $\{t_n\}$ such that $t_n\to +\infty$ and $f^{t_n}\to
\tilde{f}$ in $C(\mathbb R_{+},\mathfrak B)$ as $n\to \infty$.
\end{enumerate}

Then
\begin{enumerate}
\item the sequence $\{\varphi_n\}$ converges to $\varphi$ in
$C(\mathbb R_{+},\mathfrak B)$ as $n\to \infty$, where
\begin{equation}\label{eqLP1}
\varphi_n(t) =\int_{0}^{+\infty}G(t-\tau)f_n(\tau)d\tau
\end{equation}
and
\begin{equation}\label{eqLP2}
\varphi(t) =\int_{0}^{+\infty}G(t-\tau)f(\tau)d\tau
\end{equation}
for any $t\in \mathbb R_{+}$ and $n\in \mathbb N$; \item
$\varphi^{t_n}\to \tilde{\varphi}$ in $C(\mathbb R,\mathfrak B)$
as $n\to \infty$, where
%\begin{equation}\label{eqLP2.0}
%\varphi(t) =\int_{0}^{+\infty}G(t-\tau)f(\tau)d\tau
%\end{equation}
%for any $t\in \mathbb R_{+}$ and
\begin{equation}\label{eqLP2.1}
\tilde{\varphi}(t)
=\int_{-\infty}^{+\infty}G(t-\tau)\tilde{f}(\tau)d\tau
\end{equation}
for any $t\in \mathbb R$.
\end{enumerate}
\end{lemma}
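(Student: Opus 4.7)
The plan is to reduce both parts to the local estimate (\ref{eqL_1}) from Theorem \ref{thLS11_2}(iii), which bounds $|\mathbb G(h)|$ on $[0, L]$ in terms of $h$ on $[0, l]$ plus an exponentially small tail. For part (i), linearity of $\mathbb G$ gives $\varphi_n - \varphi = \mathbb G(f_n - f)$. From $\|f_n\| \le C$ and $f_n \to f$ pointwise one gets $\|f\| \le C$ and hence $\|f_n - f\| \le 2C$. Applying (\ref{eqL_1}) and using $\omega_l(t) \le 2\mathcal N/\nu$ from (\ref{eqBL7}) yields
\[
\max_{0 \le t \le L} |\varphi_n(t) - \varphi(t)| \le \frac{2\mathcal N}{\nu} \max_{0 \le \tau \le l} |f_n(\tau) - f(\tau)| + \frac{2C \mathcal N}{\nu} e^{\nu(L-l)}.
\]
For given $\varepsilon > 0$, first choose $l$ large enough to make the tail smaller than $\varepsilon/2$, then use compact-open convergence of $f_n \to f$ on $[0, l]$ for the first term. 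This gives convergence on each $[0, L]$, i.e.\ compact-open convergence in $C(\mathbb R_+, \mathfrak B)$.

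For part (ii), the substitution $s = \tau - t_n$ in (\ref{eqLP2}) yields
\[
\varphi^{t_n}(t) = \int_{-t_n}^\infty G(t-s)\, f^{t_n}(s)\, ds.
\]
Extending $\tilde f$ to $\mathbb R$ with $\|\tilde f\| \le \|f\|$, one has
\[
\varphi^{t_n}(t) - \tilde\varphi(t) = \int_{-t_n}^\infty G(t-s)\bigl[f^{t_n}(s) - \tilde f(s)\bigr] ds \; - \int_{-\infty}^{-t_n} G(t-s)\, \tilde f(s)\, ds.
\]
Fix $L > 0$ and restrict to $|t| \le L$. The far-left tail is bounded by $C\mathcal N/\nu \cdot e^{-\nu(t_n - L)} \to 0$, using $t - s \ge t_n - L > 0$ there. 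For the main integral introduce an auxiliary cutoff $l > L$ and split the range $(-t_n, \infty)$ into $[-l, l]$ and its complement. The exterior contribution is bounded by $4C\mathcal N/\nu \cdot e^{-\nu(l - L)}$ via the exponential decay (\ref{eqHG1}) of $\|G\|$, while the interior contribution is bounded by $(2\mathcal N/\nu)\, \max_{|s| \le l} |f^{t_n}(s) - \tilde f(s)|$ using $\int_{\mathbb R}\|G(t-s)\|\,ds \le 2\mathcal N/\nu$; this last quantity tends to zero by compact-open convergence of $f^{t_n} \to \tilde f$. A standard three-$\varepsilon$ argument (pick $l$ large first, then $n$ large) yields the claim.

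The main obstacle is conceptual rather than computational: justifying the extension of $\tilde f$ from $\mathbb R_+$ to $\mathbb R$ that is implicit in the formula (\ref{eqLP2.1}). Since $f^{t_n}(s) = f(s + t_n)$ is eventually defined for every $s \in \mathbb R$ and uniformly bounded by $\|f\|$, one may, after passing to a subsequence via a diagonal/Arzel\`a--Ascoli argument in the spirit of Lemma \ref{lLP01}(iv), assume $f^{t_n} \to \tilde f$ in $C(\mathbb R, \mathfrak B)$. Once this extension is in hand, the splitting argument above is essentially a parametrised version of the proof of Theorem \ref{thLS11_2}(iv).
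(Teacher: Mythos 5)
Your argument is correct and follows essentially the same route as the paper: part (i) is exactly the estimate of Theorem \ref{thLS11_2}(iii) applied to $f_n-f$ with the cutoff $l$ chosen before $n$, and part (ii) is the same change of variables followed by the same decomposition into a far tail over $(-\infty,-t_n]$, two exponentially small exterior pieces, and a middle piece over $[-l,l]$ controlled by compact-open convergence. The one point where you deviate --- justifying the extension of $\tilde f$ to all of $\mathbb R$ --- is handled in the paper via Lemma \ref{lLP01} and the embedding convention of Remark \ref{remLP1} rather than by an Arzel\`a--Ascoli extraction (which would in any case require Lagrange stability rather than mere boundedness of $f$), but this does not affect the substance of the proof.
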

\begin{proof} Let $\{l_n\}$ be an arbitrary sequence such that $l_n\to
+\infty$ as $n\to \infty$ and $L$ be an arbitrary fixed positive
number. Denote by $n_0\in \mathbb N$ such that $t_n >L$ for any
$n\ge n_0$. Taking into consideration (\ref{eqLP1})-(\ref{eqLP2})
and Theorem \ref{thLS11_2} (item (iii)) we will have
\begin{equation}\label{eqLP3}
\max\limits_{0\le t\le L}|\varphi_{n}(t)-\varphi(t)|\le
\frac{2\mathcal N}{\nu}\max\limits_{0\le \tau \le
l_n}|f_n(\tau)-f(t)|+\frac{2C\mathcal N}{\nu}e^{\nu (L-l_n)}
\end{equation}
for any $n\ge n_0$. Since $f_n\to f$ as $n\to \infty$, then by
Remark \ref{remD1} (item 2.) we have
\begin{equation}\label{eqLP4}
\lim\limits_{n\to \infty}\max\limits_{0\le \tau \le
l_n}|f_n(\tau)-f(t)|=0 .
\end{equation}
Passing to the limit in (\ref{eqLP3}) as $n\to \infty$ and taking
into consideration (\ref{eqLP4}) we obtain
\begin{equation}\label{eqLP5}
\max\limits_{0\le t\le L}|\varphi_{n}(t)-\varphi(t)|=0
\end{equation}
and, consequently, $\varphi_n\to \varphi$ as $n\to \infty$ in
$C(\mathbb R_{+},\mathfrak B)$.

Note that the function $\tilde{f}\in C(\mathbb R,\mathfrak B)$ and
$|\tilde{f}(t)|\le \sup\{|f(t)|:\ t\in \mathbb R_{+}\}$ for any
$t\in \mathbb R$, that is, $\tilde{f}\in C_{b}(\mathbb R,\mathfrak
B)$. By Theorem \ref{thLS11_3} equation
\begin{equation}\label{eqLP6}
x'=\mathcal A x +\tilde{f}(t)
\end{equation}
has a unique solution $\tilde{\varphi}\in C_{b}(\mathbb
R,\mathfrak B)$ defined by equality (\ref{eqLP2.1}). From
(\ref{eqLP2}) we have
\begin{equation}\label{eqLP7}
\varphi(t+t_n) =\int_{0}^{+\infty}G(t+t_n-\tau)f(\tau)d\tau
=\int_{-t_n}^{+\infty}G(t-\tau)f(\tau +t_n)d\tau
\end{equation}
for any $t\in [-t_n,+\infty)$. Let $L$ be an arbitrary positive
fixed number and $n_0\in\mathbb N$ be chosen such that $t_n\ge L$
for any $n\ge n_0$. According to (\ref{eqLP2.1}) and (\ref{eqLP7})
we get
\begin{eqnarray}\label{eqLP08}
& I_{n}(t):=|\varphi(t+t_n)-\tilde{\varphi}(t)|=\nonumber
\\
&|\int_{-t_n}^{+\infty}G(t-\tau)[f(\tau
+t_n)-\tilde{f}(\tau)]d\tau - \int_{-\infty}^{-t_n}G(t-\tau)\tilde{f}(\tau) |\le \nonumber \\
& |\int_{-t_n}^{+\infty}G(t-\tau)[f(\tau +t_n)-\tilde{f}(\tau)]d\tau |+\nonumber \\
& |\int_{-\infty}^{-t_n}G(t-\tau)[f(\tau
+t_n)-\tilde{f}(\tau)]d\tau |= I^{1}_{n}(t)+I^{2}_{n}(t),\nonumber
\end{eqnarray}
where
\begin{equation}\label{eqLP09}
I_{n}^{1}(t):=|\int_{-t_n}^{+\infty}G(t-\tau)[f(\tau
+t_n)-\tilde{f}(\tau)]d\tau |\nonumber
\end{equation}
and
\begin{equation}\label{eqLP801}
I_{n}^{2}(t):=|\int_{-\infty}^{-t_n}G(t-\tau)\tilde{f}(\tau)d\tau
|\nonumber
\end{equation}
for any $t\in [-t_n,+\infty)$.

We can estimate the integral $I_{n}^{2}(t)$ as follows
\begin{eqnarray}\label{eqLP8}
& I_{n}^{2}(t):=|\int_{-\infty}^{-t_n}G(t-\tau)\tilde{f}(\tau)d\tau |\le \\
& 2\mathcal N \|f\|\int_{-\infty}^{-t_n}e^{-\nu(t-\tau)}d\tau
=\frac{2\mathcal N\|f\|}{\nu}e^{-\nu (t+t_n)} \nonumber
\end{eqnarray}
for any $t\in \mathbb R$ and $n\in \mathbb N$, where
$\|f\|:=\sup\limits_{t\in\mathbb R_{+}}|f(t)|$. From (\ref{eqLP8})
we obtain
\begin{eqnarray}\label{eqLP9}
\max\limits_{|t|\le L}I_{n}^{2}(t)\le \frac{2\mathcal
N\|f\|}{\nu}e^{-\nu t_n}e^{\nu L}
\end{eqnarray}
for any $n\in \mathbb N$. Passing to the limit in (\ref{eqLP9}) as
$n\to \infty$ we get
\begin{eqnarray}\label{eqLP10}
\lim\limits_{n\to \infty}\max\limits_{|t|\le
L}I_{n}^{2}(t)=0\nonumber
\end{eqnarray}
for any $L>0$.

To estimate the integral $I_n^{1}(t)$ we note that
\begin{eqnarray}\label{eqLP11}
&
I_{n}^{1}(t):=|\int_{-t_n}^{+\infty}G(t-\tau)\hat{f}_{n}(\tau)d\tau
|\le \nonumber \\ &
|\int_{-t_n}^{-l}G(t-\tau)\hat{f}_{n}(\tau)d\tau | +\nonumber
\\
& |\int_{-l}^{l}G(t-\tau)\hat{f}_{n}(\tau)d\tau | +
 |\int_{l}^{+\infty}G(t-\tau)\hat{f}_{n}(\tau)d\tau | = \nonumber \\
& I_{n}^{11}(t)+I_{n}^{12}(t) + I_{n}^{13}(t),\nonumber
\end{eqnarray}
where $\hat{f}_{n}(t):=f(t+t_n)-\tilde{f}(t)$ for any $t\in
\mathbb R$ and $\|\hat{f}_{n}\|\le 2\|f\|$ for any $n\in \mathbb
N$,
\begin{equation}\label{eqLP12}
I^{11}_{n}(t):=|\int_{-t_n}^{-l}G(t-\tau)\hat{f}_{n}(\tau)d\tau
|,\nonumber
\end{equation}
\begin{equation}\label{eqLP13}
I^{12}_{n}(t):=|\int_{-l}^{l}G(t-\tau)\hat{f}_{n}(\tau)d\tau
|\nonumber
\end{equation}
and
\begin{equation}\label{eqLP14}
I^{13}_{n}(t):=|\int_{l}^{+\infty}G(t-\tau)\hat{f}_{n}(\tau)d\tau
| .\nonumber
\end{equation}
Since
\begin{eqnarray}\label{eqLP15}
& I_{n}^{11}(t)\le \mathcal N \int_{-t_n}^{-l}e^{-\nu
(t-\tau)}|\hat{f}_{n}(\tau)|d\tau \le 4\mathcal N \|f\|e^{-\nu
t}\frac{e^{\nu \tau}}{\nu}\Big{|}_{-t_n}^{-l}\le \nonumber
\\
& \frac{4\mathcal N \|f\|}{\nu}e^{-\nu t}(e^{-\nu l}-e^{-\nu
t_n})\nonumber
\end{eqnarray}
and, consequently,
\begin{equation}\label{eqLP16}
\limsup\limits_{n\to \infty}\max\limits_{|t|\le L}I^{11}_{n}(t)\le
\frac{4\mathcal N \|f\|}{\nu}e^{\nu (L-l)}
\end{equation}
for any $l>0$. Taking in consideration that $l$ is an arbitrary
positive number from (\ref{eqLP16}) we obtain
\begin{equation}\label{eqLP17}
\lim\limits_{n\to \infty}\max\limits_{|t|\le L}I^{11}_{n}(t)=0 .
\end{equation}
For $I^{12}_{n}(t)$ we have
\begin{equation}\label{eqLP018}
I^{12}_{n}(t):=|\int_{-l}^{l}G(t-\tau)\hat{f}_{n}(\tau)d\tau |\le
\mathcal N\max\limits_{|\tau|\le l}|\hat{f}_{n}(\tau)|
\max\limits_{|\tau|\le l}\int_{-l}^{l}e^{-\nu |t-\tau|}d\tau
,\nonumber
\end{equation}
hence, we get
\begin{eqnarray}\label{eqLP18}
& \max\limits_{|t|\le L}I^{12}_{n}(t)=
|\int_{-l}^{l}G(t-\tau)\hat{f}_{n}(\tau)d\tau | \le \nonumber \\
& \mathcal N \max\limits_{|\tau|\le
l}|\hat{f}_{n}(\tau)|\max\limits_{|t|\le L}\int_{-l}^{l}e^{-\nu
|t-\tau|}d\tau \to 0
\end{eqnarray}
as $n\to \infty$.

Finally, for the integral $I^{13}_{n}$ we obtain
\begin{equation}\label{eqLP19}
I^{13}_{n}(t):=|\int_{l}^{+\infty}G(t-\tau)\hat{f}_{n}(\tau)d\tau
| \le \frac{4\mathcal N \|f\|}{\nu}e^{\nu (t-l)}\nonumber
\end{equation}
and, consequently,
\begin{equation}\label{eqLP20}
\limsup\limits_{n\to \infty}\max\limits_{|t|\le L}I_{n}^{13}(t)\le
\frac{4\mathcal N \|f\|}{\nu}e^{\nu (L-l)}
\end{equation}
for any $l>0$. From (\ref{eqLP20}) we get
\begin{equation}\label{eqLP21}
\lim\limits_{n\to \infty}\max\limits_{|t|\le L}I_{n}^{13}(t) = 0 .
 \end{equation}
From (\ref{eqLP17}), (\ref{eqLP18}) and (\ref{eqLP21}) we obtain
\begin{equation}\label{eqLP22}
\lim\limits_{n\to \infty}\max\limits_{|t|\le
L}I^{1}_{n}(t)=0\nonumber
\end{equation}
for any $L>0$ and, hence,
\begin{equation}\label{eqLP23}
\lim\limits_{n\to \infty}\max\limits_{|t|\le L}I_{n}(t)=0
.\nonumber
\end{equation}
Lemma is completely proved.
\end{proof}

\section{Bounded solutions on the Semi-Axis for Semi--Linear Differential
Equations}\label{S4}

In this section we establish the conditions, under which the
existence of a compatible in the limit solution of a semi-linear
equation with hyperbolic linear part.

Denote by $\Delta$ the family of all increasing (nondecreasing)
functions $L :\mathbb R_{+}\to\mathbb R_{+}$ with
$\lim\limits_{r\to 0^{+}}L(r)=0$.

\begin{definition}\label{defLE1} A function $F\in C(\mathbb T\times\mathfrak B,\mathfrak
B)$ is said to be
\begin{enumerate}
\item local Lipschitzian with respect to variable $u\in \mathfrak
B$ on $\mathfrak B$ uniformly with respect to $t\in \mathbb T$, if
there exists a function $L:\mathbb R_{+}\to \mathbb R_{+}$ such
that
\begin{equation}\label{eqLE1}
|F(t,u_1)-F(t,u_2)|\le L(r)|u_1-u_2|
\end{equation}
for all $u_1,u_2\in B[0,r]$ and $t\in\mathbb T$, where
$B[0,r]:=\{u\in \mathfrak B:\ |u|\le r\}$; \item (global)
Lipschitzian with respect to variable $u\in \mathfrak B$ on
$\mathfrak B$ uniformly with respect to $t\in \mathbb T$, if there
exists a positive constant $L$ such that
\begin{equation}\label{eqLE1.1}
|F(t,u_1)-F(t,u_2)|\le L|u_1-u_2|
\end{equation}
for all $u_1,u_2\in \mathfrak B$ and $t\in\mathbb T$.
\end{enumerate}
\end{definition}

\begin{definition}\label{defLE2} The smallest constant figuring in
(\ref{eqLE1}) (respectively, in (\ref{eqLE1.1})) is called
Lipschitz constant of function $F$ on $\mathbb T\times B[0,r]$
(respectively, on $\mathbb T\times \mathfrak B$). Notation
$Lip(r,F)$ (respectively, $Lip(F)$).
\end{definition}

\begin{remark}\label{remL1} Note the function $Lip(\cdot,F):\mathbb R_{+}\to \mathbb
R_{+}$ is nondecreasing, that is, if for all $r_1,r_2\in \mathbb
R_{+}$ such that $r_1\le r_2$ one has $Lip(r_1,F)\le Lip(r_2,F)$.
\end{remark}

\begin{lemma}\label{lLC1} Assume that
$F\in C(\mathbb T\times \mathfrak B,\mathfrak B)$ and it is local
(respectively, global) Lipschitzian with respect to variable $u\in
\mathfrak B$ on $\mathfrak B$ uniformly with respect to $t\in
\mathbb T$, i.e., there exists a function $L:\mathbb R_{+}\to
\mathbb R_{+}$ (respectively, a positive constant $L$) such that
(\ref{eqLE1}) holds (respectively, (\ref{eqLE1.1})) for all
$u_1,u_2\in \mathfrak B$ and $t\in\mathbb T$.

Then every function $G\in H(F);=\overline{\{\sigma(\tau,F)|\ \tau
\in \mathbb T\}}$ is local (respectively, global) Lipschitzian
with respect to variable $u\in \mathfrak B$ on $\mathfrak B$
uniformly with respect to $t\in \mathbb T$ with the same function
$L(\cdot,F):\mathbb R_{+}\to \mathbb R_{+}$ (respectively, the
same positive constant $L$).
\end{lemma}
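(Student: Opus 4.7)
The plan is to first verify the Lipschitz bound for every translate $F^\tau = \sigma(\tau,F)$, and then pass this bound through limits in the compact-open topology to cover the entire hull $H(F)$. Since $\operatorname{Lip}(r,F)$ is an infimum, this will automatically give the inequality with the \emph{same} Lipschitz function (respectively, constant).

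Step one: fix $r>0$, take arbitrary $u_1,u_2\in B[0,r]$ and arbitrary $t\in\mathbb T$, and compute directly from the definition of the shift
\begin{equation*}
|F^\tau(t,u_1)-F^\tau(t,u_2)| \;=\; |F(t+\tau,u_1)-F(t+\tau,u_2)| \;\le\; L(r)|u_1-u_2|,
\end{equation*}
the last inequality coming from (\ref{eqLE1}) applied at the time instant $t+\tau\in\mathbb T$. Thus every translate $F^\tau$ satisfies the same local Lipschitz estimate as $F$ with identical function $L$; the corresponding computation for the global case is the same with $L$ a constant and $u_1,u_2$ ranging over all of $\mathfrak B$.

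Step two: let $G\in H(F)$, so there is a sequence $\{\tau_n\}\subset\mathbb T$ with $F^{\tau_n}\to G$ in $C(\mathbb T\times\mathfrak B,\mathfrak B)$ (compact-open topology). Fixing $u_1,u_2\in B[0,r]$ and $t\in\mathbb T$, the two-point set $\{(t,u_1),(t,u_2)\}$ is compact in $\mathbb T\times\mathfrak B$, so uniform convergence on it gives in particular the pointwise limits $F^{\tau_n}(t,u_i)\to G(t,u_i)$ for $i=1,2$. Passing to the limit in the inequality
\begin{equation*}
|F^{\tau_n}(t,u_1)-F^{\tau_n}(t,u_2)| \;\le\; L(r)|u_1-u_2|,
\end{equation*}
which holds for every $n$ by step one, yields $|G(t,u_1)-G(t,u_2)|\le L(r)|u_1-u_2|$. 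Since $t$ and $u_1,u_2\in B[0,r]$ were arbitrary, $G$ is local Lipschitzian on $B[0,r]$ uniformly in $t$ with the same function $L$. The global case runs identically.

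There is no real obstacle here: the only point requiring care is to note that the topology of $C(\mathbb T\times\mathfrak B,\mathfrak B)$ is strong enough to deliver pointwise convergence (which it obviously is, as it is the compact-open topology), and that a weak inequality is preserved under such pointwise limits. Taking infima then gives $\operatorname{Lip}(r,G)\le\operatorname{Lip}(r,F)$ (respectively $\operatorname{Lip}(G)\le\operatorname{Lip}(F)$), which is exactly what the lemma asserts.
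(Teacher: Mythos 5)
Your proof is correct: the uniform-in-$t$ Lipschitz bound passes trivially to every translate $F^\tau$, and pointwise convergence (which compact-open convergence certainly supplies) preserves the weak inequality in the limit, so every element of the hull inherits the same $L$. The paper simply declares the statement evident, and your argument is exactly the routine verification it is suppressing.
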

\begin{proof} This statement is evident.
\end{proof}

Consider an evolutionary differential equation
\begin{equation}\label{eqSL1}
u'=Au + F(t,u)
\end{equation}
in the Banach space $\mathfrak B$, where $F$ is a nonlinear
continuous mapping ("small" perturbation) acting from $\mathbb
R_{+}\times \mathfrak B$ into $\mathfrak B$.

A function $u:[0,a)\mapsto \mathfrak B$ is said to be a weak
(mild) solution of equation (\ref{eqSL1}) passing through the
point $x\in \mathfrak B$ at the initial moment $t=0$ (notation
$\varphi(t,x,F)$) if $u\in C([0,T],\mathfrak B)$ and satisfies the
integral equation
\begin{equation}\label{eqSL3}
u(t)=U(t)x+\int_{0}^{t}U(t-s)F(s,u(s))ds \nonumber
\end{equation}
for all $t\in [0,T]$ and $0<T<a$.

\begin{definition}\label{defSL01}
A function $F\in C(\mathbb R_{+}\times \mathfrak B,\mathfrak B)$
is said to be regular, if for any $x\in \mathfrak B$ there exists
a unique solution $\varphi(t,x,F)$ of equation (\ref{eqSL1})
passing through the point $x$ at the initial moment $t=0$ and
defined on $\mathbb R_{+}$.
\end{definition}

Below everywhere in this Section we suppose that the function
$F\in C(\mathbb R_{+}\times \mathfrak B,\mathfrak B)$ is regular.

\begin{remark}\label{remRF1} If the function $F\in C(\mathbb R_{+}\times \mathfrak B,\mathfrak
B)$ is global Lipschitzian and $\sup\limits_{t\ge
0}|F(t,0)|<+\infty$, then it is regular (see Theorem 6.6.9 from
\cite[Ch.VI]{Che_2020}).
\end{remark}

Let $\mathfrak{L}$ some set of sequences $\{t_k\}\to+\infty$ and
$r>0$. Denote by
\begin{enumerate}
\item[-] $C_{r}(\mathfrak{L}):= \{\varphi : \varphi\in
C_{b}(\mathbb{R}_+,\mathfrak B),\ \mathfrak{L}\subseteq
\mathfrak{L}_{\varphi}\ \mbox{and}\ ||\varphi||\leq r \}$;
\item[-] $BC(\mathfrak L):=\{\varphi \in C_{b}(\mathbb
R_{+},\mathfrak B):\ \mathfrak L\subseteq \mathfrak
L_{\varphi}\}$; \item[-] $\mathcal L (\mathfrak L):=\{\varphi \in
\mathcal L(\mathbb R_{+},\mathfrak B):\ \mathfrak L \subseteq
\mathfrak L_{\varphi}\}$; \item[-] $\mathcal L_{r}(\mathfrak
L):=\{\varphi \in \mathcal L (\mathfrak L):\ \mbox{and}\
||\varphi||\leq r \}$.
\end{enumerate}

\begin{lemma}\label{l3.4.1} \cite[Ch.III]{Che_2009}, \cite{Sch72}
$C_{r}(\mathfrak{L})$ (respectively, $C_{b}(\mathfrak L)$ or
$\mathcal L(\mathfrak L)$) is a closed subspace of the metric
space $C_b(\mathbb{R}_+,\mathfrak B)$.
\end{lemma}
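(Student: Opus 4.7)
The plan is to reduce all three assertions to one common observation: if $\{\varphi_n\} \subset C_b(\mathbb R_{+},\mathfrak B)$ converges in sup-norm to $\varphi$ and each $\varphi_n$ satisfies $\mathfrak L \subseteq \mathfrak L_{\varphi_n}$, then also $\mathfrak L \subseteq \mathfrak L_{\varphi}$. The additional constraints are easily handled separately: the norm bound $\|\varphi\|\le r$ defining $C_r(\mathfrak L)$ passes to uniform limits since $\|\cdot\|$ is continuous on $C_b(\mathbb R_{+},\mathfrak B)$; and Lagrange stability passes to uniform limits because, by Lemma \ref{lLS1}, it is equivalent to the conjunction of uniform continuity and pre-compactness of the range, and both of these properties are closed under uniform convergence (the range stays totally bounded by an $\varepsilon/2$-net argument, and uniform continuity is preserved by uniform limits).

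For the key reduction, I would fix an arbitrary $\{t_k\}\in \mathfrak L$. For each $n$, since $\{t_k\}\in \mathfrak L_{\varphi_n}$, there exists $\psi_n\in C(\mathbb R_{+},\mathfrak B)$ with $\varphi_n^{t_k}\to \psi_n$ in the compact-open topology as $k\to\infty$. Compact-open convergence entails pointwise convergence, so for every $t\ge 0$
\begin{equation*}
\psi_n(t)=\lim_{k\to\infty}\varphi_n(t+t_k),
\end{equation*}
and hence the estimates
\begin{equation*}
|\psi_n(t)|\le \|\varphi_n\|, \qquad |\psi_n(t)-\psi_m(t)|\le \|\varphi_n-\varphi_m\|
\end{equation*}
hold for every $t\ge 0$. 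Taking sup in $t$, the sequence $\{\psi_n\}$ is Cauchy in the Banach space $C_b(\mathbb R_{+},\mathfrak B)$ and therefore converges uniformly to some $\psi\in C_b(\mathbb R_{+},\mathfrak B)$.

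It then remains to show that $\varphi^{t_k}\to \psi$ in the compact-open topology, which gives $\{t_k\}\in \mathfrak L_{\varphi}$. Fixing $L>0$ and $\varepsilon >0$, I would invoke a standard $\varepsilon/3$ decomposition
\begin{equation*}
|\varphi^{t_k}(t)-\psi(t)|\le |\varphi^{t_k}(t)-\varphi_n^{t_k}(t)|+|\varphi_n^{t_k}(t)-\psi_n(t)|+|\psi_n(t)-\psi(t)|
\end{equation*}
valid for every $t\in[0,L]$. The first term is bounded by $\|\varphi-\varphi_n\|$ and the third by $\|\psi_n-\psi\|$; choosing $n$ large makes both $<\varepsilon/3$, after which the middle term is $<\varepsilon/3$ uniformly on $[0,L]$ for all $k$ sufficiently large, by definition of $\psi_n$.

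There is no serious obstacle, since the argument is essentially routine once the contraction $\|\psi_n-\psi_m\|\le \|\varphi_n-\varphi_m\|$ is isolated. The only point requiring a little care is the mismatch between the two topologies in play (uniform convergence on $\mathbb R_{+}$ for the ambient space $C_b(\mathbb R_{+},\mathfrak B)$ versus the compact-open topology for the shifts), which is bridged by first constructing the candidate limit $\psi$ via the uniformly Cauchy sequence $\{\psi_n\}$ and only then verifying compact-open convergence of $\{\varphi^{t_k}\}$ to it.
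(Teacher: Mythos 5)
Your argument is correct. The paper does not actually prove this lemma --- it is quoted from \cite[Ch.III]{Che_2009} and \cite{Sch72} --- so there is no in-paper proof to compare against; your route (for a fixed $\{t_k\}\in\mathfrak L$, extract the compact-open limits $\psi_n$ of $\{\varphi_n^{t_k}\}_k$, observe the key contraction $\|\psi_n-\psi_m\|\le\|\varphi_n-\varphi_m\|$ so that $\{\psi_n\}$ is uniformly Cauchy with limit $\psi$, then close with the $\varepsilon/3$ estimate to get $\varphi^{t_k}\to\psi$ in the compact-open topology) is the standard argument, and your treatment of the side conditions ($\|\varphi\|\le r$ passes to sup-norm limits, and Lagrange stability passes via Lemma \ref{lLS1} since uniform continuity and total boundedness of the range are preserved under uniform convergence in the complete space $\mathfrak B$) is likewise sound.
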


\begin{lemma}\label{l3.4.2} Let $\varphi\in \mathcal L(\mathbb{R}_+,\mathfrak B)$
(respectively, $\varphi \in C_{b}(\mathbb R_{+},\mathfrak B)$),
$F\in \mathcal L(\mathbb{R_{+}}\times \mathfrak B,\mathfrak B)$
(respectively, $F\in C_{b}(\mathbb R_{+}\times \mathfrak
B,\mathfrak B)$), $g(t):=F(t,\varphi(t))$ for all
$t\in\mathbb{R}_+$ and $F_Q:=F|_{\mathbb{R}\times Q}$, where
$Q:=\overline{\varphi(\mathbb{R}_{+})}$. If $F_Q$ satisfies the
condition of Lipschitz with respect to the second variable with
the constant $L>0$, then
\begin{enumerate}
\item $\mathfrak L_{(F_{Q},\varphi)}\subseteq \mathfrak L_{g}$;
\item if $\{t_n\}\in \mathfrak L_{(F_{Q},\varphi)}$ and
$$
(\tilde{\varphi},\tilde{F})=\lim\limits_{n\to
\infty}(\varphi^{t_n},F_{Q}^{t_n}),
$$
then $\lim\limits_{n\to \infty}g^{t_n}=\tilde{g}$, where
$\tilde{g}(t):=\tilde{F}(t,\tilde{\varphi}(t))$ for any $t\in
\mathbb R$; \item $g\in \mathcal L(\mathbb R_{+},\mathfrak B)$
(respectively, $g\in C_{b}(\mathbb R_{+},\mathfrak B)$).
\end{enumerate}
\end{lemma}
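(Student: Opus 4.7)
The plan is to treat statements (i) and (ii) together as a single convergence argument, since (i) is just the ``for every sequence'' version of (ii). For (iii) I will then quote Lemma~\ref{lLS3} in the Lagrange-stable case and observe that boundedness is trivial in the $C_b$ case.

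Fix an arbitrary $\{t_n\}\in\mathfrak L_{(F_Q,\varphi)}$, so by definition $t_n\to+\infty$ and
$$
\varphi^{t_n}\to\tilde\varphi\ \text{in}\ C(\mathbb R_+,\mathfrak B),\qquad F_Q^{t_n}\to\tilde F\ \text{in}\ C(\mathbb R_+\times Q,\mathfrak B).
$$
Set $\tilde g(t):=\tilde F(t,\tilde\varphi(t))$. By Lemma~\ref{lLP01} (items (i)--(iii)), $\tilde\varphi$ extends to $C(\mathbb R,\mathfrak B)$ with $\tilde\varphi(\mathbb R)\subseteq Q$, and an analogous extension holds for $\tilde F$ on $\mathbb R\times Q$; in particular $\tilde g\in C(\mathbb R,\mathfrak B)$ is well defined. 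Fix $\ell>0$ and choose $n_0$ so large that $t+t_n\ge 0$ whenever $|t|\le \ell$ and $n\ge n_0$. The key step is the triangle-inequality split
\begin{align*}
|g(t+t_n)-\tilde g(t)|
&\le |F(t+t_n,\varphi(t+t_n))-F(t+t_n,\tilde\varphi(t))| \\
&\quad + |F_Q^{t_n}(t,\tilde\varphi(t))-\tilde F(t,\tilde\varphi(t))|,
\end{align*}
which is legitimate because both $\varphi(t+t_n)$ and $\tilde\varphi(t)$ lie in $Q$.

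The first summand is bounded, via the Lipschitz hypothesis on $F_Q$, by $L\,|\varphi^{t_n}(t)-\tilde\varphi(t)|$, and therefore
$$
\max_{|t|\le\ell}|F(t+t_n,\varphi(t+t_n))-F(t+t_n,\tilde\varphi(t))|\le L\max_{|t|\le\ell}|\varphi^{t_n}(t)-\tilde\varphi(t)|\longrightarrow 0
$$
by Remark~\ref{remD1}. For the second summand, since $Q$ is compact (by Lagrange stability of $\varphi$ and Lemma~\ref{lLS1}), compact-open convergence $F_Q^{t_n}\to\tilde F$ on $\mathbb R\times Q$ gives uniform convergence on $[-\ell,\ell]\times Q$, so that
$$
\max_{|t|\le\ell}|F_Q^{t_n}(t,\tilde\varphi(t))-\tilde F(t,\tilde\varphi(t))|\le \max_{|t|\le\ell,\,x\in Q}|F_Q^{t_n}(t,x)-\tilde F(t,x)|\longrightarrow 0.
$$
Adding the two estimates yields $\max_{|t|\le\ell}|g^{t_n}(t)-\tilde g(t)|\to 0$ for every $\ell>0$, hence $g^{t_n}\to\tilde g$ in $C(\mathbb R,\mathfrak B)$. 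This establishes (ii) for the chosen sequence, and since $\{t_n\}\in\mathfrak L_{(F_Q,\varphi)}$ was arbitrary, also (i).

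For (iii), in the Lagrange-stable case the conclusion $g\in\mathcal L(\mathbb R_+,\mathfrak B)$ is exactly Lemma~\ref{lLS3} applied to $\varphi$ and $F_Q$; the Lagrange stability of $F_Q$ used there follows from $F\in\mathcal L(\mathbb R_+\times\mathfrak B,\mathfrak B)$ by restriction. In the $C_b$ case, $g$ is continuous as the composition of continuous maps and $\|g\|\le\sup_{\mathbb R_+\times\mathfrak B}|F|<\infty$. The main obstacle in the whole argument is coordinating the two independent convergences (of $\varphi^{t_n}$ in $C(\mathbb R_+,\mathfrak B)$ and of $F_Q^{t_n}$ in $C(\mathbb R_+\times Q,\mathfrak B)$); the Lipschitz condition on $F_Q$ is precisely what lets the triangle inequality absorb the $\varphi$-error into a uniform estimate, while compactness of $Q$ (from Lagrange stability of $\varphi$) is what converts compact-open convergence of $F_Q^{t_n}$ into the uniform estimate needed on $[-\ell,\ell]$.
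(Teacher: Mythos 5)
Your proof is correct and follows essentially the same route as the paper: the same triangle-inequality decomposition of $|g(t+t_n)-\tilde g(t)|$ into a term controlled by the compact-open convergence $F_Q^{t_n}\to\tilde F$ (uniform on $[-\ell,\ell]\times Q$ since $Q$ is compact) and a term absorbed by the Lipschitz condition, the only cosmetic difference being that you insert the intermediate point $F(t+t_n,\tilde\varphi(t))$ where the paper inserts $\tilde F(t,\varphi(t+t_n))$ (so the paper applies the Lipschitz bound to $\tilde F$, you to $F$ itself). For (iii) you cite Lemma \ref{lLS3} directly rather than rederiving Lagrange stability of $g$ from part (i) as the paper does, which is a legitimate shortcut; only note that in the $C_b$ case the bound should read $\|g\|\le\sup\{|F(t,x)|:\ t\in\mathbb R_+,\ x\in B\}$ for the bounded set $B=\varphi(\mathbb R_+)$ (boundedness of $F$ on bounded sets), not a supremum over all of $\mathbb R_+\times\mathfrak B$.
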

\begin{proof} Let $\{t_n\}\in \mathfrak L_{(F_{Q},\varphi))}$, then
we have
\begin{eqnarray}\label{eqLF1}
& |g(t+t_n)-\tilde{g}(t)|\le
|F(t+t_n,\varphi(t+t_n))-\tilde{F}(t,\varphi(t+t_n))|+ \nonumber
\\
&|\tilde{F}(t,\varphi(t+t_n))-\tilde{F}(t,\tilde{\varphi}(t))|\le
\\
& \max\limits_{x\in
Q}|F(t+t_n,x)-\tilde{F}(t,x)|+L|\varphi(t+t_n)-\tilde{\varphi}(t)|\nonumber
\end{eqnarray}
for any $n\in \mathbb N$ and $t\in\mathbb R_{+}$. From
(\ref{eqLF1}) we get
\begin{eqnarray}\label{eqLF2}
& \max\limits_{0\le t\le l}|g(t+t_n)-\tilde{g}(t)|\le
\max\limits_{x\in Q}\max\limits_{0\le t\le
l}|F(t+t_n,x)-\tilde{F}(t,x)|+\nonumber \\
& L\max\limits_{0\le t\le l}|\varphi(t+t_n)-\tilde{\varphi}(t)|
\end{eqnarray}
for any $l>0$. Passing to the limit in (\ref{eqLF2}) we conclude
that the sequence $\{g^{t_n}\}$ converges $\tilde{g}$ in
$C(\mathbb R_{+},\mathfrak B)$, i.e., $\{t_n\}\in \mathfrak
L_{g}$.

Let $\{t_n\}$ ($t_n\in \mathbb R_{+}$) be an arbitrary sequence.
We will show that the sequence $\{g^{t_n}\}$ is pre-compact in
$\mathcal L(\mathbb R_{+},\mathfrak B)$. Without loss of
generality we can suppose that the sequence $\{t_n\}$ goes to
$+\infty$ as $n\to \infty$. Since the functions $\varphi$ and $F$
are Lagrange stable, then without loss of generality we may assume
that the sequence $\{t_n\}\in \mathfrak L_{(F_{Q},\varphi)}$.
According to the first statement of Lemma \ref{l3.4.2} we have
$\{t_n\} \in \mathfrak L_{g}$ and, consequently, $g\in \mathcal
L(\mathbb R_{+},\mathfrak B)$.

To finish the proof of Lemma it is sufficient to show that if the
functions $f$ and $F$ are bounded, then $g$ is so. In fact. Since
\begin{equation}\label{eqBd1}
|g(t)|=|F(t,0)+F(t,\varphi(t))-F(t,0)|\le |F(t,0)|+L|\varphi(t)|
\nonumber
\end{equation}
for any $t\in \mathbb R_{+}$ and, consequently,
\begin{equation}\label{eqBd2}
\sup\limits_{t\in\mathbb R_{+}}|g(t)|\le \sup\limits_{t\in\mathbb
R_{+}}|F(t,0)|+L\sup\limits_{t\in\mathbb R_{+}}|\varphi(t)|
.\nonumber
\end{equation}
This means that $g\in C_{b}(\mathbb R_{+},\mathfrak B)$. Lemma is
proved.
\end{proof}

Let us consider a differential equation
\begin{equation}\label{eqSL01}
u'=Au + f(t) +F(t,u),
\end{equation}
where $F\in C(\mathbb{R_{+}}\times \mathfrak B,\mathfrak B)$ and
$F(t,0)=0$ for any $t\in \mathbb R_{+}$.

We put $\mathfrak B_{-}:=\mathcal P(\mathfrak B)$, where $\mathcal
P$ is a projection figuring in the Definition \ref{defH1}.

Let $\varphi_{0}$ be a unique compatible in the limit solution of
equation
\begin{equation}\label{eqSL1.1}
u'=Au + f(t)
\end{equation}
with $\mathcal P\varphi_{0}(0)=0$ the existence of which is
guarantied by Theorem \ref{thLS11_2}. Assume
$Q:=\overline{\varphi_{0}(\mathbb{R}_+)}$ and by $Q_{r}:=\{x\in
\mathfrak B|\ \rho(x,Q)\le r\}$, where $\rho(x,Q):=\inf\{|x-q|:\
q\in Q\}$, denote a neighborhood of the set $Q\subset \mathfrak B$
of radius $r>0$.

\begin{definition}\label{defB1.0} A function $F\in C(\mathbb R_{+}\times \mathfrak B,\mathfrak
B)$ is said to be bounded (respectively, compact) if for any
bounded subset $B$ (respectively, compact subset $Q$) from
$\mathfrak B$ the set $F(\mathbb R_{+}\times B):=\{F(t,x)|\
(t,x)\in \mathbb R_{+}\times B\}$ (respectively, $F(\mathbb
R_{+}\times Q)$) is bounded (respectively, pre-compact) in
$\mathfrak B$.
\end{definition}

Let $\varphi\in C(\mathbb R_{+},\mathfrak B)$ be a compact
solution of equation (\ref{eqSL1.1}).

\begin{definition}\label{defSL1} The solution $\varphi$ of equation (\ref{eqSL1.1}) is said to
be compatible by the character of recurrence in the limit (shortly
compatible in the limit) if $\mathfrak L_{(f,F_{Q})}\subseteq
\mathfrak L_{\varphi}$, where $Q:=\overline{\varphi(\mathbb
R_{+})}$ and $F_{Q}:=F\big{|}_{\mathbb R_{+}\times Q}$.
\end{definition}

\begin{theorem}\label{thSL0.1} Assume that the following conditions are
fulfilled:
\begin{enumerate}
\item $\varphi$ is a compact solution of equation (\ref{eqSL1.1});
\item the functions $f$ and $F_{Q}$ are jointly asymptotically
stationary (respectively, asymptotically $\tau$-periodic,
asymptotically quasi-periodic with the spectrum of frequencies
$\nu_1,\ldots,\nu_k$, asymptotically Bohr almost periodic,
asymptotically Bohr almost automorphic, asymptotically Birkhoff
recurrent); \item $\varphi$ is a compatible in the limit solution
of equation (\ref{eqSL1.1}).
\end{enumerate}

Then the solution $\varphi$ is also asymptotically stationary
(respectively, asymptotically $\tau$-periodic, asymptotically
quasi-periodic with the spectrum of frequencies
$\nu_1,\ldots,\nu_k$, asymptotically Bohr almost periodic,
asymptotically Bohr almost automorphic, asymptotically Birkhoff
recurrent).
\end{theorem}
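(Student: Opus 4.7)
The plan is to package the pair $(f,F_Q)$ into a single function with values in a product space and then reduce directly to Theorem \ref{th2}. The joint hypothesis on $(f,F_Q)$ is exactly designed so that this packaging produces a function with the desired asymptotic recurrence property in the product space, and the compatibility hypothesis $\mathfrak L_{(f,F_Q)}\subseteq \mathfrak L_\varphi$ then lets the theorem take over.

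First I would use Theorem \ref{thAA1} and Lemma \ref{lAA0} to identify $F_Q\in C(\mathbb R_{+}\times Q,\mathfrak B)$ with the function $F^{\sharp}:=h(F_Q)\in C(\mathbb R_{+},C(Q,\mathfrak B))$. By Theorem \ref{thAA2} and Remark \ref{remR01}, the stated asymptotic recurrence of $F_Q$ in $t$ uniformly with respect to $x\in Q$ is equivalent to the same asymptotic recurrence of $F^{\sharp}$ as a function with values in the Banach space $C(Q,\mathfrak B)$. I would then set
\begin{equation*}
Y:=\mathfrak B\times C(Q,\mathfrak B),\qquad \psi(t):=(f(t),F^{\sharp}(t))\in Y,
\end{equation*}
equipped with the obvious product metric. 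The hypothesis that $f$ and $F_Q$ are \emph{jointly} asymptotically stationary (respectively, asymptotically $\tau$-periodic, asymptotically quasi-periodic, asymptotically Bohr almost periodic, asymptotically Bohr almost automorphic, or asymptotically Birkhoff recurrent) is exactly the statement that $\psi\in C(\mathbb R_{+},Y)$ possesses the corresponding asymptotic property, since all these notions are preserved and characterized componentwise on a finite product of complete metric spaces.

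Next I would check that the compatibility hypothesis translates correctly into the comparability hypothesis required by Theorem \ref{th2}. By definition, $\mathfrak L_\psi$ consists of sequences $\{t_n\}$ with $t_n\to+\infty$ for which both $f^{t_n}\to \tilde f$ in $C(\mathbb R_{+},\mathfrak B)$ and $(F^{\sharp})^{t_n}\to \tilde F^{\sharp}$ in $C(\mathbb R_{+},C(Q,\mathfrak B))$. Via the isometric equivariance from Lemma \ref{lAA0}, the latter is the same as $F_{Q}^{t_n}$ converging in $C(\mathbb R_{+}\times Q,\mathfrak B)$, so $\mathfrak L_\psi=\mathfrak L_{(f,F_Q)}$. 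Hypothesis~(iii) of the theorem therefore becomes $\mathfrak L_\psi\subseteq \mathfrak L_\varphi$, i.e.\ $\varphi$ is comparable in the limit with $\psi$ in the sense of the definition preceding Theorem \ref{th2}.

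Finally, I would apply Theorem \ref{th2} directly to the pair $\varphi\in C(\mathbb R_{+},\mathfrak B)$ and $\psi\in C(\mathbb R_{+},Y)$: comparability in the limit together with the asymptotic recurrence of $\psi$ transfers the recurrence property to $\varphi$, which is the conclusion. The only real obstacle is the bookkeeping in the second step: one must be sure that the several notions of ``asymptotically X'' for a $Y$-valued function really do coincide with ``jointly asymptotically X'' for the pair $(f,F_Q)$. For the classes listed this reduces to a standard product-space argument, using that the limiting profile $p$ in Definition \ref{defAP01} exists if and only if each of its components does, and that all the recurrence classes involved are closed under finite products. Once this is verified, the proof is immediate from Theorem \ref{th2}.
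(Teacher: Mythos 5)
Your proposal is correct and follows essentially the same route as the paper: the paper's own proof is a one-line reduction to Definition \ref{defSL1} and Theorem \ref{th2}, and your argument is precisely the standard unpacking of that reduction (passing to $h(F_Q)\in C(\mathbb R_{+},C(Q,\mathfrak B))$ via Theorem \ref{thAA1} and Lemma \ref{lAA0}, forming the product-space function $\psi=(f,h(F_Q))$, identifying $\mathfrak L_{\psi}$ with $\mathfrak L_{(f,F_Q)}$, and invoking Theorem \ref{th2}). The bookkeeping you flag at the end is exactly the content the paper leaves implicit, and your treatment of it is sound.
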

\begin{proof} This statement directly follows from the Definition
\ref{defSL1} and Theorem \ref{th2}.
\end{proof}

\begin{theorem}\label{th4A}
Let $f\in C(\mathbb{R}_+,\mathfrak B)$, $F\in C(\mathbb{R}_+\times
\mathfrak B,\mathfrak B)$ and $A$ be an infinitesimal generator
which generates a $C_0$-semigroup $\{U(t)\}_{t\ge 0}$ acting on
$\mathfrak B$.

Assume that the following conditions are fulfilled:
\begin{enumerate}
\item[$1)$] the semigroup $\{U(t)\}_{t\ge 0}$ is hyperbolic;
\item[$2)$] the functions $f$ and $F$ are Lagrange stable;
\item[$3)$] $F$ satisfies the condition of Lipschitz with respect
to $x\in \mathfrak B$ with the constant of Lipschitz
$L<\frac{\nu}{2\mathcal N}.$
\end{enumerate}

Then
\begin{enumerate}
\item equation $(\ref{eqSL01})$ has the unique Lagrange stable
solution $\varphi$ with $\mathcal P\varphi(0)=0$ and \item
\begin{equation}\label{eqBF1}
\|\varphi -\varphi_{0}\|\le r ,
\end{equation}
where $$r:=\frac{4\mathcal N^2L\|f\|}{\nu (\nu -2\mathcal N L)}.$$
\end{enumerate}
\end{theorem}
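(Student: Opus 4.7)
The plan is to rewrite (\ref{eqSL01}) as a fixed-point equation for the Green operator $\mathbb G$ of Theorem \ref{thLS11_2} and to apply the Banach contraction principle inside the closed ball of radius $r$ around $\varphi_{0}$ in the space of Lagrange stable functions. Concretely, I would consider
$$T(\varphi)(t):=\int_{0}^{+\infty}G(t-\tau)\bigl(f(\tau)+F(\tau,\varphi(\tau))\bigr)\,d\tau,\qquad \varphi\in C_{b}(\mathbb R_{+},\mathfrak B).$$
Since $F(t,0)=0$ and $F$ is globally $L$-Lipschitz, the integrand lies in $C_{b}(\mathbb R_{+},\mathfrak B)$ whenever $\varphi$ does, so by Theorem \ref{thLS11_2}(i)--(ii) the operator $T$ maps $C_{b}(\mathbb R_{+},\mathfrak B)$ into itself and its fixed points are exactly the bounded mild solutions of (\ref{eqSL01}) with $\mathcal P\varphi(0)=0$.

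Next I would verify that $T$ is a strict contraction on all of $C_{b}(\mathbb R_{+},\mathfrak B)$: using $\|\mathbb G\|\le 2\mathcal N/\nu$ from Theorem \ref{thLS11_2}(ii) together with hypothesis 3),
$$\|T(\varphi_{1})-T(\varphi_{2})\|\le \frac{2\mathcal N}{\nu}\,\|F(\cdot,\varphi_{1})-F(\cdot,\varphi_{2})\|\le q\,\|\varphi_{1}-\varphi_{2}\|,\quad q:=\frac{2\mathcal N L}{\nu}<1.$$
For ball-invariance I would exploit $F(\cdot,0)\equiv 0$ to obtain $\|T(\varphi_{0})-\varphi_{0}\|=\|\mathbb G(F(\cdot,\varphi_{0}))\|\le (2\mathcal N L/\nu)\|\varphi_{0}\|\le 4\mathcal N^{2}L\|f\|/\nu^{2}$ (using $\|\varphi_{0}\|\le (2\mathcal N/\nu)\|f\|$), whence for $\varphi\in \overline B[\varphi_{0},r]$
$$\|T(\varphi)-\varphi_{0}\|\le \|T(\varphi)-T(\varphi_{0})\|+\|T(\varphi_{0})-\varphi_{0}\|\le qr+\frac{4\mathcal N^{2}L\|f\|}{\nu^{2}},$$
which is $\le r$ precisely when $r\ge 4\mathcal N^{2}L\|f\|/(\nu(\nu-2\mathcal N L))$, recovering exactly the value declared in (\ref{eqBF1}).

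To upgrade the fixed point from merely bounded to Lagrange stable I would restrict $T$ to $M:=\mathcal L(\mathbb R_{+},\mathfrak B)\cap \overline B[\varphi_{0},r]$, which is a complete metric space by Lemma \ref{l3.4.1}. Given $\varphi\in M$, Lemma \ref{l3.4.2} (applied to the Lagrange stable $F$ and to $\varphi$, with the global Lipschitz constant $L$) shows that $g(\cdot):=F(\cdot,\varphi(\cdot))$ is Lagrange stable, hence so is $f+g$, and then Corollary \ref{cor 10} (Lagrange stable case) yields $T(\varphi)=\mathbb G(f+g)\in \mathcal L(\mathbb R_{+},\mathfrak B)$; combined with the ball-invariance from the previous paragraph this gives $T(M)\subseteq M$. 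The Banach contraction principle on $M$ then produces a unique $\varphi\in M$ satisfying $\varphi=T(\varphi)$, and its uniqueness among all $\psi\in C_{b}(\mathbb R_{+},\mathfrak B)$ with $\mathcal P\psi(0)=0$ follows from the fact that $T$ is a strict contraction on the whole of $C_{b}(\mathbb R_{+},\mathfrak B)$.

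The main obstacle is ensuring that the three constraints -- being a mild solution with $\mathcal P\varphi(0)=0$, being Lagrange stable, and lying in $\overline B[\varphi_{0},r]$ -- are simultaneously preserved by $T$. The linear theory of Theorem \ref{thLS11_2} and Corollary \ref{cor 10}, together with the composition result of Lemma \ref{l3.4.2}, are tailored exactly for this, so once the correct radius $r$ is identified the Banach fixed-point step itself is routine.
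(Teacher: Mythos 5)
Your proposal is correct and follows essentially the same route as the paper: a Banach fixed-point argument for the Green operator with contraction constant $2\mathcal N L/\nu<1$, using Lemma \ref{l3.4.2} and the linear theory of Theorem \ref{thLS11_2} to preserve Lagrange stability and the condition $\mathcal P\varphi(0)=0$. The only (cosmetic) difference is that the paper translates by $\varphi_{0}$ and contracts on the whole space $\mathcal L(\mathbb R_{+},\mathfrak B)$, deriving the bound $\|\varphi-\varphi_{0}\|\le r$ a posteriori from the fixed-point identity, whereas you obtain the same bound up front via invariance of the ball $\overline B[\varphi_{0},r]$.
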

\begin{proof}
Consider the space $\mathcal L(\mathbb R_{+},\mathfrak B)$
(respectively, $C_{b}(\mathbb R_{+},\mathfrak B)$). By Lemma
\ref{l3.4.1} it is a complete metric space. Define an operator
$$
\Phi :\mathcal L(\mathbb R_{+},\mathfrak B)\to \mathcal L(\mathbb
R_{+},\mathfrak B)
$$
(respectively
$$
\Phi : C_{b}(\mathbb R_{+},\mathfrak B)\to C_{b}(\mathbb
R_{+},\mathfrak B) )
$$
as follows: if $\psi\in \mathcal L(\mathbb R_{+},\mathfrak B)$
(respectively, $f\in C_{b}(\mathbb R_{+},\mathfrak B)$), then by
Lemma \ref{l3.4.2} $g\in \mathcal L(\mathbb R_{+},\mathfrak B)$,
where $g(t)=F(t,\psi(t))$ for any $t\in \mathbb R_{+}$. By Theorem
\ref{thLS11_2} the equation
\begin{equation}\label{eqD01}
\frac{dz}{dt}=Az+F(t,\psi(t)+\varphi_0(t))
\end{equation}
has the unique solution $\gamma\in \mathcal
L(\mathbb{R}_+,\mathfrak B)$ (respectively, $\gamma \in
C_{b}(\mathbb R_{+},\mathfrak B)$) and satisfies the condition
$\mathcal P\gamma(0)=0$. So, $\gamma \in \mathcal L(\mathbb
R_{+},\mathfrak B)$ (respectively, $\gamma \in C_{b}(\mathbb
R_{+},\mathfrak B)$). Let $\Phi(\psi):=\gamma$. From the said
above follows that $\Phi$ is well defined. Let us show that the
operator $\Phi$ is a contraction. In fact, it is easy to note that
the function $\gamma:=\gamma_1-\gamma_2=\Phi(\psi_1)-\Phi(\psi_2)$
is a solution of the equation
$$
\frac{du}{dt}=Au+F(t,\psi_1(t)+\varphi_0(t))-
                    F(t,\psi_2(t)+\varphi_0(t))
$$
with the initial condition $\mathcal P \gamma(0)=0$ and, by
Theorem \ref{thLS11_2}, it is subordinated to the estimate
$$
\begin{array}{c}
||\Phi(\psi_1)-\Phi(\psi_2)||\leq\\
\frac{2\mathcal N}{\nu}\sup\limits_{t\geq 0}
|F(t,\psi_1(t)+\varphi_0(t))-F(t,\psi_2(t)+\varphi_0(t))|\leq \\
\frac{2\mathcal N}{\nu}
L||\psi_1-\psi_2||=\alpha||\psi_1-\psi_2||.
\end{array}
$$
Since $\alpha=\frac{2\mathcal N}{\nu} L<\frac{2\mathcal N}{\nu}
\frac{\nu}{2\mathcal N}=1$, then $\Phi$ is a contraction and,
consequently, there exists the unique function $\overline{\psi}\in
\mathcal L(\mathbb R_{+},\mathfrak{B})$ (respectively, $\gamma \in
C_{b}(\mathbb R_{+},\mathfrak B)$) such that
$\Phi(\overline{\psi})=\overline{\psi}$. To finish the proof of
the theorem it is sufficient to assume that
$\varphi:=\overline{\psi}+\varphi_0$ and note that $\varphi$ is
desired solution.

To prove the second statement of Theorem we note that the function
$\psi :=\varphi -\varphi_{0}$ is a unique bounded on $\mathbb
R_{+}$ solution of equation (\ref{eqD01}) with $\mathcal
P\psi(0)=0$. By Theorem \ref{thLS11_2} we have
\begin{eqnarray}\label{eqD02}
& \|\varphi -\varphi_0\|=\|\psi \|\le \frac{2\mathcal
N}{\nu}\sup\limits_{t\ge 0}|F(t,\psi(t)+\varphi_0(t)|\le L\|\psi
+\varphi_0\|\le \nonumber \\
& \frac{2\mathcal N L}{\nu}(\|\psi\|+\|\varphi_0\|)\le
\frac{2\mathcal N L}{\nu}(r+\frac{2\mathcal N}{\nu}\|f\|)=r .
\end{eqnarray}
The theorem is proved.
\end{proof}

\begin{theorem}\label{th4A.1}
Let $f\in C(\mathbb{R},\mathfrak B)$, $F\in C(\mathbb{R}\times
\mathfrak B,\mathfrak B)$ and $A$ be an infinitesimal generator
which generates a $C_0$-semigroup $\{U(t)\}_{t\ge 0}$ acting on
$\mathfrak B$.

Assume that the following conditions are fulfilled:
\begin{enumerate}
\item[$1)$] the semigroup $\{U(t)\}_{t\ge 0}$ is hyperbolic;
\item[$2)$] the functions $f$ and $F$ are Lagrange stable
(respectively, $f$ and $F$ are bounded); \item[$3)$] $F$ satisfies
the condition of Lipschitz with respect to $x\in \mathfrak B$ with
the constant of Lipschitz $L<\frac{\nu}{2\mathcal N}$.
\end{enumerate}

Then equation $(\ref{eqSL1.1})$ has the unique Lagrange stable
solution $\varphi \in \mathcal L(\mathfrak L)$ (respectively,
bounded solution $\varphi \in C_{b}(\mathbb R,\mathfrak B)$.
\end{theorem}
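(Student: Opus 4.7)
The plan is to repeat the Banach contraction argument of Theorem \ref{th4A} essentially verbatim, substituting the semi-axis Green's operator from Theorem \ref{thLS11_2} by the full-line Green's operator from Theorem \ref{thLS11_3}. More precisely, let $\varphi_{0}\in C_{b}(\mathbb R,\mathfrak B)$ be the unique bounded solution of the linear equation $u'=Au+f(t)$ on $\mathbb R$ given by
\begin{equation*}
\varphi_{0}(t)=\int_{-\infty}^{+\infty}G(t-\tau)f(\tau)d\tau,
\end{equation*}
whose existence is guaranteed by Theorem \ref{thLS11_3}. In the Lagrange stable case, one first needs to verify that $\varphi_{0}\in\mathcal L(\mathbb R,\mathfrak B)$; this follows from the Lagrange stability of $f$ together with the continuous dependence estimate (item (iii) of Theorem \ref{thLS11_3}), arguing exactly as in item (iv) of Theorem \ref{thLS11_2}.

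Next, I would fix as the ambient space either $X:=\mathcal L(\mathbb R,\mathfrak B)$ or $X:=C_{b}(\mathbb R,\mathfrak B)$, both of which are complete metric spaces (the analog of Lemma \ref{l3.4.1} on the full line, which is proved in the same way). Define the operator $\Phi:X\to X$ by letting $\Phi(\psi)$ be the unique bounded solution on $\mathbb R$ of the linear equation
\begin{equation*}
\frac{dz}{dt}=Az+F(t,\psi(t)+\varphi_{0}(t)),
\end{equation*}
that is,
\begin{equation*}
\Phi(\psi)(t):=\int_{-\infty}^{+\infty}G(t-\tau)F(\tau,\psi(\tau)+\varphi_{0}(\tau))d\tau.
\end{equation*}
To see that $\Phi$ maps $X$ into itself, note that $F(t,\psi(t)+\varphi_{0}(t))$ is Lagrange stable (respectively bounded) by the full-line analog of Lemma \ref{l3.4.2}, and then Theorem \ref{thLS11_3} together with the same sort of two-sided limit argument used in Lemma \ref{lLP1} yields Lagrange stability of $\Phi(\psi)$.

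It remains to check that $\Phi$ is a strict contraction. Writing $\Phi(\psi_{1})-\Phi(\psi_{2})$ as the bounded solution of the linear inhomogeneous equation driven by $F(t,\psi_{1}+\varphi_{0})-F(t,\psi_{2}+\varphi_{0})$, the bound $\|\mathbb G\|\le 2\mathcal N/\nu$ from Theorem \ref{thLS11_3} combined with the global Lipschitz constant $L$ yields
\begin{equation*}
\|\Phi(\psi_{1})-\Phi(\psi_{2})\|\le \frac{2\mathcal N}{\nu}L\,\|\psi_{1}-\psi_{2}\|,
\end{equation*}
and the hypothesis $L<\nu/(2\mathcal N)$ makes the factor strictly less than $1$. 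By the Banach fixed-point theorem there is a unique $\overline{\psi}\in X$ with $\Phi(\overline{\psi})=\overline{\psi}$, and $\varphi:=\overline{\psi}+\varphi_{0}$ is the desired unique solution.

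The main obstacle I expect is the verification that $\Phi$ preserves the Lagrange stable subspace on the whole line, since, unlike on the semi-axis, we must handle pre-compactness of translates in both the $+\infty$ and $-\infty$ directions. This is where the full-line analogs of Lemmas \ref{l3.4.2} and \ref{lLP1} must be invoked carefully; the rest of the argument is a mechanical transcription of the proof of Theorem \ref{th4A}.
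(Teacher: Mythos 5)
Your proposal is correct and follows exactly the route the paper intends: the paper's own proof consists of the single remark that Theorem \ref{th4A}'s contraction argument goes through verbatim once Theorem \ref{thLS11_2} is replaced by Theorem \ref{thLS11_3}. Your additional attention to the full-line analogues of Lemmas \ref{l3.4.1}, \ref{l3.4.2} and \ref{lLP1} (needed to see that $\Phi$ preserves the Lagrange stable subspace of $C_{b}(\mathbb R,\mathfrak B)$) only makes explicit what the paper leaves implicit.
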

\begin{proof} This statement can be proved using the same ideas as
in the proof of Theorem \ref{th4A}, but instead of Theorem
\ref{thLS11_2} we need to apply Theorem \ref{thLS11_3}.
\end{proof}

\begin{theorem}\label{thLS01} Under the conditions of Theorem \ref{th4A}
the unique Lagrange stable solution $\varphi$ of equation
(\ref{eqSL1.1}) with $\mathcal P\varphi(0)=0$ is compatible in the
limit, i.e., $\mathfrak L_{(f,F_{Q})}\subseteq \mathfrak
L_{\varphi}$.
\end{theorem}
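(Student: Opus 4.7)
The plan is to use Lagrange stability to extract a subsequential limit of $\{\varphi^{t_n}\}$ in $C(\mathbb R,\mathfrak B)$, identify that limit with the bounded solution on $\mathbb R$ of the appropriate limit equation, and invoke uniqueness from Theorem \ref{th4A.1} to conclude that the full sequence converges.

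Fix $\{t_n\}\in\mathfrak L_{(f,F_Q)}$, so that $f^{t_n}\to\tilde f$ in $C(\mathbb R_+,\mathfrak B)$ and $F_Q^{t_n}\to\tilde F$ in $C(\mathbb R_+\times Q,\mathfrak B)$ for some limits $(\tilde f,\tilde F)$. By Lemma \ref{lLC1} the Lipschitz constant $L<\nu/(2\mathcal N)$ is inherited by $\tilde F$, so the hypotheses of Theorem \ref{th4A.1} are met by the limit data. Theorem \ref{th4A} gives $\varphi\in\mathcal L(\mathbb R_+,\mathfrak B)$, and Lemma \ref{lLP01}(iv) makes $\Sigma_\varphi^{+}$ pre-compact in $C(\mathbb R,\mathfrak B)$. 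From an arbitrary subsequence of $\{t_n\}$ I extract (and relabel) a further subsequence along which $\varphi^{t_n}\to\tilde\varphi$ in $C(\mathbb R,\mathfrak B)$.

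To identify $\tilde\varphi$, set $h(t):=f(t)+F(t,\varphi(t))$, so that $\varphi$ solves the linear equation $u'=Au+h(t)$ with $\mathcal P\varphi(0)=0$, and Theorem \ref{thLS11_2} yields the representation
\[
\varphi(t)=\int_{0}^{+\infty}G(t-\tau)h(\tau)\,d\tau.
\]
Since along the extracted subsequence $\varphi^{t_n}\to\tilde\varphi$ in $C(\mathbb R_+,\mathfrak B)$ and $F_Q^{t_n}\to\tilde F$, the sequence $\{t_n\}$ lies in $\mathfrak L_{(F_Q,\varphi)}$; Lemma \ref{l3.4.2} then gives $F(\cdot+t_n,\varphi(\cdot+t_n))\to\tilde F(\cdot,\tilde\varphi(\cdot))$ in $C(\mathbb R_+,\mathfrak B)$, and combining with $f^{t_n}\to\tilde f$ yields $h^{t_n}\to\tilde h$ in $C(\mathbb R_+,\mathfrak B)$, where $\tilde h(s):=\tilde f(s)+\tilde F(s,\tilde\varphi(s))$. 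Applying Lemma \ref{lLP1}(ii) to the linear Green-function representation of $\varphi$ identifies the limit as
\[
\tilde\varphi(t)=\int_{-\infty}^{+\infty}G(t-\tau)\tilde h(\tau)\,d\tau,
\]
so $\tilde\varphi$ is a bounded mild solution on $\mathbb R$ of the limit equation $u'=Au+\tilde f(t)+\tilde F(t,u)$.

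By Theorem \ref{th4A.1} this limit equation has a unique Lagrange-stable solution on $\mathbb R$, so every subsequential limit of $\{\varphi^{t_n}\}$ equals the same $\tilde\varphi$, which depends only on $(\tilde f,\tilde F)$ and therefore only on the original sequence $\{t_n\}$. A standard subsubsequence argument then forces the whole sequence $\{\varphi^{t_n}\}$ to converge in $C(\mathbb R,\mathfrak B)$, and restriction to $\mathbb R_+$ gives $\{t_n\}\in\mathfrak L_\varphi$, i.e.\ $\mathfrak L_{(f,F_Q)}\subseteq\mathfrak L_\varphi$. The chief difficulty I anticipate is reducing the nonlinear problem to the linear form handled by Lemma \ref{lLP1}(ii): passing to the limit inside the Green's-function integral requires the simultaneous convergence $\varphi^{t_n}\to\tilde\varphi$ and $F_Q^{t_n}\to\tilde F$, so one must first secure the subsequential limit of $\varphi^{t_n}$ by pre-compactness before Lemma \ref{l3.4.2} can deliver convergence of the composition, at which point the linear machinery of Lemma \ref{lLP1} closes the argument.
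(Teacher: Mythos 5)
Your proposal is correct and follows essentially the same route as the paper's own proof: pre-compactness of $\{\varphi^{t_n}\}$ from Lagrange stability, rewriting the nonlinear equation as a linear one with forcing $f(t)+F(t,\varphi(t))$, identifying subsequential limits via Lemma \ref{l3.4.2} and Lemma \ref{lLP1}(ii), and concluding uniqueness of the limit point from Theorem \ref{th4A.1} (with Lemma \ref{lLC1} supplying the Lipschitz constant for $\tilde F$). The only cosmetic difference is that you phrase the final step as a sub-subsequence argument where the paper argues that the pre-compact sequence has at most one limiting point; these are the same device.
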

\begin{proof}
Since the solution $\varphi$ is Lagrange stable, then the set
$Q:=\overline{\varphi(\mathbb R_{+})}$ is compact in $\mathfrak
B$. Let $\{t_n\}\in \mathfrak L_{(f,F_{Q})}$, where
$F_{Q}=F\big{|}_{\mathbb R_{+}\times Q}$, then the sequences
$\{f^{t_n}\}$ and $\{F^{t_n}_{Q}\}$ converge in $C(\mathbb
R_{+},\mathfrak B)$ and $C(\mathbb R_{+}\times Q,\mathfrak B)$.
Denote by $\tilde{f}=\lim\limits_{n\to \infty}f^{t_n}$ and
$\tilde{F}=\lim\limits_{n\to \infty}F_{Q}^{t_n}$.

Consider the sequence $\{\varphi^{t_n}\}$. It is pre-compact in
$C(\mathbb R_{+},\mathfrak B)$ because $\varphi$ is Lagrange
stable. We need to prove that the sequence $\{\varphi^{t_n}\}$
converges in $C(\mathbb R_{+},\mathfrak B)$. To this end it is
sufficient to show that it has at most one limiting point. Let
$\tilde{\varphi}$ be a limiting point of the sequence
$\{\varphi^{t_n}\}$, then there exists a subsequence
$\{t_{n_{k}}\}\subseteq \{t_n\}$ such that
$\tilde{\varphi}=\lim\limits_{k\to \infty}\varphi^{t_{n_{k}}}$.

Since $\varphi$ is a solution of equation (\ref{eqSL1.1}), then
the function $\varphi^{t_{n_{k}}}$ is a solution of equation
\begin{equation}\label{eqSL1.01}
x'=\mathcal A x+f^{t_{n_k}}(t) + F_{Q}^{t_{n_k}}(t,x)
\end{equation}
or equivalently it is a solution of linear equation
\begin{equation}\label{eqSL1.11}
x'=\mathcal A x+f^{t_{n_k}}(t) +
F_{Q}^{t_{n_k}}(t,\varphi^{t_{n_k}}(t)).
\end{equation}
Denote by $\mathfrak{f}(t):=f(t)+F(t,\varphi(t))$, then
$\mathfrak{f}^{t_{n_{k}}}=f^{t_{n_{k}}}(t)+F_{Q}^{t_{n_{k}}}(t,\varphi^{t_{n_{k}}}(t))$
for any $t\in\mathbb R_{+}$. By Lemma \ref{lLS1} the function
$\mathfrak{f}\in C(\mathbb R_{+},\mathfrak B)$ is Lagrange stable
and according to Lemma \ref{l3.4.2} (item (ii)) the sequence
$\{\mathfrak{f}^{t_{n_{k}}}\}$ converges in the space $C(\mathbb
R,\mathfrak B)$ to the function
$\tilde{\mathfrak{f}}(t):=\tilde{F}(t,\tilde{\varphi}(t))$ for any
$t\in\mathbb R$. By Lemma \ref{lLP1} (item (ii)) the function
$\tilde{\varphi}\in C(\mathbb R,\mathfrak B)$ is unique compact
solution ($\tilde{\varphi(\mathbb R)}\subseteq Q$) of equation
\begin{equation}\label{eqSL1.21}
x'=\mathcal Ax +\tilde{f}(t)+\tilde{F}(t,\tilde{\varphi}(t))
\end{equation}
or equivalently of equation
\begin{equation}\label{eqSL1.31}
x'=\mathcal Ax +\tilde{f}(t)+\tilde{F}(t,x).
\end{equation}
Note that by Theorem \ref{th4A.1} equation (\ref{eqSL1.31}) has a
unique Lagrange stable solution, because the function
$\tilde{F}\in H(F_{Q})$ and by Lemma \ref{lLC1} it is Lipschitzian
with the same constant $L$. Thus every limiting function
$\tilde{\varphi}\in C(\mathbb R,\mathfrak B)$ of the sequence
$\{\varphi^{t_n}\}$ is a Lagrange stable solution of equation
(\ref{eqSL1.31}). Since equation (\ref{eqSL1.31}) has a unique
Lagrange stable solution $\tilde{\varphi}\in C(\mathbb R,\mathfrak
B)$, then the sequence $\{\varphi^{t_n}\}$ converges, i.e.,
$\{t_n\}\in \mathfrak L_{\varphi}$. Theorem is proved.
\end{proof}

\begin{coro}\label{corSL1} Under the conditions of Theorem
\ref{th4A} if the functions $f$  and $F_{Q}$, where
$Q=\overline{\varphi(\mathbb R_{+})}$, are jointly asymptotically
stationary (respectively, asymptotically $\tau$-periodic,
asymptotically quasi-periodic with the spectrum of frequencies
$\nu_1,\ldots,\nu_k$, asymptotically Bohr almost periodic,
asymptotically Bohr almost automorphic, asymptotically Birkhoff
recurrent), then the solution $\varphi$ is so.
\end{coro}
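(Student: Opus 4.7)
The plan is to derive this corollary directly from two results already established in the paper: the compatibility theorem \ref{thLS01} for the semi-linear setting, and the transfer theorem \ref{thSL0.1} (itself a consequence of the general comparability principle, Theorem \ref{th2}). Both have been proved under hypotheses strictly weaker than those we are given here, so no genuinely new estimate is required; the work is purely to assemble them correctly.

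Concretely, I would first invoke Theorem \ref{th4A} to produce the unique Lagrange stable solution $\varphi$ of \eqref{eqSL01} with $\mathcal{P}\varphi(0)=0$. Lagrange stability, via Lemma \ref{lLS1}, guarantees that $Q:=\overline{\varphi(\mathbb{R}_{+})}$ is a compact subset of $\mathfrak{B}$, so the restriction $F_{Q}:=F\big{|}_{\mathbb{R}_{+}\times Q}$ is well defined and $\varphi$ is a compact solution in the sense of Section~4. Next I would apply Theorem \ref{thLS01}, which gives exactly the compatibility inclusion
\[
\mathfrak{L}_{(f,F_{Q})}\subseteq \mathfrak{L}_{\varphi};
\]
this is the comparability of $\varphi$ with the pair $(f,F_{Q})$ by character of recurrence in the limit required in Definition~\ref{defSL1}. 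Finally, combining this comparability with the joint asymptotic recurrence hypothesis on $(f,F_{Q})$, Theorem \ref{thSL0.1} directly delivers the conclusion.

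The only genuine subtlety, and thus the main obstacle, is the passage from a pair of functions $(f,F_{Q})$ to a single function so that the recurrence-transfer machinery, stated for scalar functions in Theorem \ref{th2}, can be applied. This is resolved by Shcherbakov's isometric identification of Theorem \ref{thAA1} and Lemma \ref{lAA0}: $F_{Q}$ is identified with $h(F_{Q})\in C(\mathbb{R}_{+},C(Q,\mathfrak{B}))$, and the pair $(f,F_{Q})$ is then viewed as a single element of $C(\mathbb{R}_{+},\mathfrak{B}\times C(Q,\mathfrak{B}))$, which is a complete metric space under the product metric. Joint asymptotic stationarity (respectively, $\tau$-periodicity, quasi-periodicity, Bohr almost periodicity, almost automorphy, Birkhoff recurrence) of the pair becomes asymptotic recurrence, in the corresponding sense, of a single $C(\mathbb{R}_{+},\mathfrak{B}\times C(Q,\mathfrak{B}))$-valued function, to which Theorem \ref{th2} applies verbatim. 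All remaining verification is mechanical, and the corollary follows.
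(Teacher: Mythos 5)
Your proposal is correct and follows exactly the paper's own route: the paper proves this corollary by citing Theorems \ref{thSL0.1}, \ref{th4A} and \ref{thLS01} in precisely the combination you describe. Your additional remark on identifying the pair $(f,F_{Q})$ with a single function via Theorem \ref{thAA1} and Lemma \ref{lAA0} is a useful elaboration of a step the paper leaves implicit, but it does not change the argument.
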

\begin{proof} This statement follows from Theorems \ref{thSL0.1}, \ref{th4A} and \ref{thLS01}.
\end{proof}

\begin{theorem}\label{th4C}
Let $f\in C(\mathbb{R}_+,\mathfrak B)$, $F\in C(\mathbb{R}_+\times
\mathfrak B,\mathfrak B)$ and $A$ be an infinitesimal generator
which generates a $C_0$-semigroup $\{U(t)\}_{t\ge 0}$ acting on
$\mathfrak B$.

Assume that the following conditions are fulfilled:
\begin{enumerate}
\item[$1)$] the semigroup $\{U(t)\}_{t\ge 0}$ is hyperbolic;
\item[$2)$] the functions $f$ and $F$ are Lagrange stable;
\item[$3)$] $F$ is Lipschitzian with respect to the second
variable with the constant $L>0$.
\end{enumerate}

Then there exists a number $\varepsilon_0>0$ such that for every
$|\varepsilon|\leq \varepsilon_0$ equation
\begin{equation}\label{eq3.4.6C}
\frac{dx}{dt}=Ax+f(t)+\varepsilon F(t,x)
\end{equation}
has a unique compatible in the limit solution
$\varphi_{\varepsilon}\in C_{b}(\mathbb{R}_+,\mathfrak B)$
satisfying the condition $\mathcal P \varphi_{\varepsilon}(0)=0$.
Besides, the sequence $\{\varphi_{\varepsilon}\}$ converges to
$\varphi_0$ as $\varepsilon\to 0$ uniformly with respect to
$t\in\mathbb{R}_+$, where $\varphi_0 \in C_{b}(\mathbb
R_{+},\mathfrak B)$ is a unique compatible in the limit solution
of equation (\ref{eqSL1.1}) with $\mathcal P\varphi_0(0)=0$.
\end{theorem}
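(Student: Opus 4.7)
The plan is to regard equation \eqref{eq3.4.6C} as equation \eqref{eqSL01} with the scaled nonlinearity $\varepsilon F$ in place of $F$, and then invoke Theorems \ref{th4A} and \ref{thLS01}, whose Lipschitz smallness hypothesis is now satisfied automatically provided $|\varepsilon|$ is sufficiently small. Indeed, $\varepsilon F$ has Lipschitz constant $|\varepsilon|L$, inherits Lagrange stability from $F$ (by Lemma \ref{lLS2}), and inherits the standing condition $(\varepsilon F)(t,0)=0$. Choose
$$
\varepsilon_0 \in \left(0,\; \frac{\nu}{2\mathcal N L}\right),
$$
for instance $\varepsilon_0 := \nu/(4\mathcal N L)$, so that for every $|\varepsilon|\le \varepsilon_0$ the Lipschitz constant of $\varepsilon F$ lies strictly below the critical threshold $\nu/(2\mathcal N)$.

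For such $\varepsilon$, Theorem \ref{th4A} produces a unique Lagrange stable solution $\varphi_\varepsilon \in C_{b}(\mathbb R_+,\mathfrak B)$ of \eqref{eq3.4.6C} with $\mathcal P\varphi_\varepsilon(0)=0$. I would note that the contraction argument used in the proof of Theorem \ref{th4A} acts on $C_{b}(\mathbb R_+, \mathfrak B)$ as well as on $\mathcal L(\mathbb R_+,\mathfrak B)$, which gives uniqueness of $\varphi_\varepsilon$ in the space $C_b$ as required by the statement. Compatibility in the limit with respect to $(f, F_Q)$ where $Q:=\overline{\varphi_\varepsilon(\mathbb R_+)}$ then follows from Theorem \ref{thLS01} applied to the scaled equation, using the obvious identity $\mathfrak L_{(f,(\varepsilon F)_Q)}=\mathfrak L_{(f, F_Q)}$ (scaling by a nonzero constant does not affect sequences along which translates converge).

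The uniform convergence $\varphi_\varepsilon \to \varphi_0$ on $\mathbb R_+$ is read directly off the quantitative estimate \eqref{eqBF1} of Theorem \ref{th4A}, now applied with the Lipschitz constant $|\varepsilon|L$:
$$
\|\varphi_\varepsilon - \varphi_0\| \;\le\; \frac{4\mathcal N^{2}|\varepsilon|L\,\|f\|}{\nu\bigl(\nu - 2\mathcal N |\varepsilon|L\bigr)}.
$$
The right-hand side is $O(\varepsilon)$ and in particular tends to $0$ as $\varepsilon\to 0$, which is precisely the uniform convergence on $\mathbb R_+$ claimed in the theorem.

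There is essentially no substantive obstacle: once one observes that the small parameter $\varepsilon$ absorbs the (possibly large) Lipschitz constant $L$, the result is a one-step application of the previously established machinery. The only items requiring care are the propagation of the constants $\mathcal N$, $\nu$, $L$ into the smallness threshold $\varepsilon_0$, and ensuring that the uniqueness provided by Theorem \ref{th4A} is interpreted in the larger space $C_b(\mathbb R_+,\mathfrak B)$, which is warranted by inspection of its proof.
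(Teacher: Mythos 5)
Your proposal is correct and follows essentially the same route as the paper: absorb the Lipschitz constant into the small parameter so that $\mathrm{Lip}(\varepsilon F)=|\varepsilon|L<\nu/(2\mathcal N)$, invoke Theorems \ref{th4A} and \ref{thLS01} for existence, uniqueness and compatibility in the limit, and read the uniform convergence off the estimate \eqref{eqBF1}. Your displayed bound $\|\varphi_\varepsilon-\varphi_0\|\le 4\mathcal N^{2}|\varepsilon|L\|f\|/\bigl(\nu(\nu-2\mathcal N|\varepsilon|L)\bigr)$ is in fact the correct substitution of $|\varepsilon|L$ into \eqref{eqBF1} (the paper's version drops an $L$ in the denominator), and your side remarks on uniqueness in $C_{b}(\mathbb R_{+},\mathfrak B)$ and on $\mathfrak L_{(f,(\varepsilon F)_{Q})}=\mathfrak L_{(f,F_{Q})}$ are harmless clarifications of the same argument.
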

\begin{proof} Let $\varepsilon_{0}\in (0,\nu/2\mathcal N L)$.
Since the function $F$ is Lagrange stable, then the function
$\varepsilon F$ is so. Note the constant of Lipschits for
$\varepsilon F$ is less than $\varepsilon_{0}L$, because
$$
Lip(\varepsilon F)\le |\varepsilon|Lip(F)\le
\varepsilon_{0}L<\frac{\nu}{2\mathcal N}
$$
for any $|\varepsilon|\le \varepsilon_{0}$. According to Theorem
\ref{thLS01} for every $|\varepsilon|\leq\varepsilon_0$ equation
(\ref{eq3.4.6C}) has a unique compatible in the limit solution
$\varphi_{\varepsilon}\in C(\mathbb R_{+},Q_{r})$ satisfying the
condition $\mathcal P \varphi_{\varepsilon}(0)=0$.

Let us estimate the difference
$\varphi_{\varepsilon}(t)-\varphi_0(t)=\psi_{\varepsilon}(t)$. By
inequality (\ref{eqBF1}) we have
\begin{equation}\label{eqBF1.1}
\|\varphi_{\varepsilon}-\varphi_{0}\|\le \frac{4|\varepsilon|
\mathcal N^{2}L\|f\|}{\nu (\nu -2\mathcal N |\varepsilon|)}
\end{equation}
for any $\varepsilon \in (0,\varepsilon_{0})$. Passing to the
limit in inequality (\ref{eqBF1.1}) as $\varepsilon\to 0$, we get
the necessary statement. The theorem is proved.
\end{proof}

\begin{remark}\label{remC01} Note that Theorem \ref{th4C}
assures the existence at least one bounded on $\mathbb R_{+}$
solution $\varphi_{\varepsilon}$ of equation (\ref{eqSL1.1}) for
sufficiently small $\varepsilon$, but this equation can have on
the space $\mathfrak B$ more than one bounded on $\mathbb R_{+}$
solution. This fact we will confirm below by the corresponding
example.
\end{remark}

\begin{example}\label{exC1} Let $p\in C_{b}(\mathbb R_{+},\mathbb R)$ be a
positive function. Consider differential equation
\begin{equation}\label{eqC1}
x'=x-\varepsilon p(t)x^{3},
\end{equation}
where $\varepsilon \in \mathbb R_{+}$. For $\varepsilon =0$ admits
a unique bounded on $\mathbb R_{+}$ compatible in the limit
solution $\varphi_{0}(t)=0$ for all $t\in\mathbb R$. If
$\varepsilon
>0$, then equation (\ref{eqC1}) admits three bounded on $\mathbb R_{+}$ solutions:
$\varphi_{\varepsilon}^{1}(t)=0$,
$\varphi_{\varepsilon}^{2}(t)=q_{\varepsilon}(t)$ and
$\varphi_{\varepsilon}^{3}(t)=-q_{\varepsilon}(t)$ for all $t\in
\mathbb R_{+}$, where
$$
q_{\varepsilon}(t)=\varepsilon ^{-1/2}\Big{(}2\int\limits_{0}^{t}
e^{-2(t-\tau)}p(\tau)d\tau\Big{)}^{-1/2}\ \ (t\in \mathbb R_{+}).
$$
Note that $||\varphi_{\varepsilon}^{1}||\to 0$,
$||\varphi_{\varepsilon}^{2}||\to \infty$ and
$||\varphi_{\varepsilon}^{3}||\to \infty$ as $\varepsilon$ goes to
$0$.
\end{example}

\section{Hyperbolic sectorial operators}\label{S5}

Let $A$ be a linear closed operator with domain $D(A)$ and range
$R(A)$ in $\mathfrak B$. Denote by $\sigma(A)$ the spectrum of
operator $A$.

Recall that $\hat{\sigma}(A):=\sigma(A)\cup \{\infty\}$ is said to
be the extended spectrum of operator $A$. Note that
$\hat{\sigma}(A)\not= \emptyset$ and closed in the extended
complex plan $\hat{\mathbb C}:=\mathbb C \cup \{\infty\}$.

\begin{definition}\label{defSS1} A subset $\sigma$ of
$\hat{\sigma}(A)$is called (see, for example,
\cite[Ch.V]{Tay_1958}) a spectral set of $A$ if it is both open
and closed in the relative topology in the extended plane
$\hat{\mathbb C}$.
\end{definition}

\begin{theorem}\label{thSS1}\cite[Ch.V]{Tay_1958} Suppose $\hat{\sigma}(A)=\sigma_{1}\cup \ldots \cup
\sigma_{m}$ where $\sigma_{1},\ldots,\sigma_{m}$ are pairwise
disjoint spectral sets of $A$. Let $P_{j}$ ($j=1,\ldots,m$) be the
projection associated with $\sigma_{j}$ and let $X_{j}$ be the
range of $P_{j}$. Then $I=P_{1}+\ldots +P_{m}$, $P_{i}P_{j}=0$ if
$i\not= j$, $D(A)$ is invariant under $P_{j}$ and $X_{j}$ is
invariant under $A$.
\end{theorem}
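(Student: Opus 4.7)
The plan is to construct the projections $P_j$ via the Riesz--Dunford holomorphic functional calculus. For each bounded spectral set $\sigma_j$ (i.e., one not containing $\infty$), I would choose a rectifiable Jordan contour $\Gamma_j$ lying in the resolvent set $\rho(A)$ that encloses $\sigma_j$ and separates it from all the other $\sigma_k$, and define
\begin{equation*}
P_j := \frac{1}{2\pi i}\oint_{\Gamma_j}(\lambda I-A)^{-1}\,d\lambda.
\end{equation*}
In the exceptional case that $\infty\in\sigma_j$ (which is possible since $A$ may be unbounded), I would set $P_j := I-\sum_{k\neq j}P_k$. The first step is to check that each $P_j$ is a well-defined bounded operator and independent of the contour, using Cauchy's theorem and holomorphy of the resolvent on $\rho(A)$.

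The algebraic identities all come from a single tool, the resolvent equation
\begin{equation*}
(\lambda I-A)^{-1}(\mu I-A)^{-1}=\frac{(\lambda I-A)^{-1}-(\mu I-A)^{-1}}{\mu-\lambda}.
\end{equation*}
To verify $P_iP_j=0$ for $i\neq j$, I would take disjoint contours $\Gamma_i,\Gamma_j$, apply Fubini and the resolvent equation, and observe that each resulting single contour integral vanishes because $(\mu I-A)^{-1}$ is holomorphic inside $\Gamma_i$ (and similarly on the other side). For $P_j^2=P_j$, I would take two nested contours $\Gamma_j\subset\Gamma_j'$ enclosing $\sigma_j$, apply the same identity, and recover a single integral over $\Gamma_j$ by the residue. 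The identity $\sum_j P_j=I$ reduces to the case of a single contour enclosing all of $\hat\sigma(A)$, deformed to a large circle $|\lambda|=R$; expanding $(\lambda I-A)^{-1}$ at $\infty$ in the Laurent series $\lambda^{-1}\sum(\lambda^{-1}A)^{k}$ (on a suitable dense domain, then extended by boundedness) yields $I$. If some $\sigma_j$ contains $\infty$, the identity $\sum P_j=I$ becomes the definition of that $P_j$, and one checks the remaining algebraic relations by elementary manipulation from those already proved for the bounded $\sigma_k$'s.

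For invariance, the key point is that $(\lambda I-A)^{-1}$ maps $\mathfrak B$ into $D(A)$ and commutes with $A$ on $D(A)$, namely $A(\lambda I-A)^{-1}x=(\lambda I-A)^{-1}Ax$ for $x\in D(A)$. Passing this identity through the contour integral (justified by uniform boundedness of the integrand and closedness of $A$) gives $AP_jx=P_jAx$ for $x\in D(A)$. Combined with closedness of $A$, this yields $P_j(D(A))\subseteq D(A)$, and then immediately $A(X_j\cap D(A))\subseteq X_j$, i.e., $X_j$ is invariant under $A$.

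The main obstacle I anticipate is the unboundedness of $A$ and the presence of $\infty$ in the extended spectrum. Unlike the bounded case, one cannot enclose $\infty$ by a finite contour, so the handling of a spectral set containing $\infty$ is asymmetric and must be done via the complementary definition $P_j=I-\sum_{k\neq j}P_k$. Moreover, interchanging $A$ with the contour integral, and identifying the range of $P_j$ as an $A$-invariant subspace, both rely on closedness of $A$ rather than on continuity, which is the delicate point where the routine finite-dimensional argument must be replaced by an approximation by Riemann sums together with a limit-closedness argument.
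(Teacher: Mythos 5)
This theorem is not proved in the paper at all; it is quoted verbatim from Taylor \cite[Ch.V]{Tay_1958}, and the argument you give is precisely the standard Riesz--Dunford functional-calculus proof found there: contour-integral projections for the bounded spectral sets, the complementary definition $P_j=I-\sum_{k\neq j}P_k$ for the set containing $\infty$, the resolvent identity for the algebraic relations, and closedness of $A$ (via Riemann-sum approximation of the integral) for the commutation $AP_jx=P_jAx$ on $D(A)$ and the resulting invariance statements. Your proposal is correct and takes essentially the same route as the cited source, including the correct handling of the two genuinely delicate points (the asymmetric treatment of the spectral set containing $\infty$ and the use of closedness rather than boundedness of $A$).
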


\begin{theorem}\label{thSS2}\cite[Ch.V]{Tay_1958} Let $\sigma_{1}$ be a spectral set of
$A$, and let $A_{1}$ be the restriction of $A$ to the range
$X_{1}$ of $P_{1}$ (with $D(A_1)=D(A)\cap X_{1}$). Then
\begin{enumerate}
\item $\sigma_{1}=\hat{\sigma}(A_{1})$. \item If $\sigma_{1}$ does
not contain $\infty$, then $X_{1}\subset D_{n}(A):=\{x\in
\mathfrak B|\ x,Ax,\ldots,A^{n-1}x\in D(A)\}$ for each $n\ge 1$,
$A_{1}$ is continuous on $X_{1}$.
\end{enumerate}
\end{theorem}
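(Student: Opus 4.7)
The plan is to build the proof on the Riesz--Dunford operational calculus adapted to closed operators with non-empty resolvent set, following the classical development used throughout Taylor. I would first fix a rectifiable Cauchy contour $\Gamma_1 \subset \mathbb{C}$ that separates $\sigma_1$ from the complementary spectral set $\sigma' := \hat{\sigma}(A) \setminus \sigma_1$ (if $\infty \in \sigma_1$ one takes $\Gamma_1$ to enclose $\sigma'$ with reverse orientation instead). Together with Theorem \ref{thSS1}, this realizes the projection associated with $\sigma_1$ as
\begin{equation*}
P_1 = \frac{1}{2\pi i}\oint_{\Gamma_1} R(\mu,A)\, d\mu,
\end{equation*}
commuting with $A$ on $D(A)$, and yields the restriction $A_1 : D(A_1) \to X_1$ with $D(A_1) = D(A) \cap X_1$.

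For part (i), I would establish the two inclusions $\hat{\sigma}(A_1) \subseteq \sigma_1$ and $\sigma_1 \subseteq \hat{\sigma}(A_1)$ separately. For the first, given $\lambda_0 \in \hat{\mathbb{C}} \setminus \sigma_1$ with $\lambda_0 \ne \infty$, I would construct a candidate resolvent on $X_1$ by the contour integral
\begin{equation*}
R_1(\lambda_0) = \frac{1}{2\pi i}\oint_{\Gamma_1} (\lambda_0-\mu)^{-1} R(\mu,A)\, d\mu,
\end{equation*}
after deforming $\Gamma_1$ slightly so that $\lambda_0$ lies outside it, and verify $(\lambda_0 I - A_1)R_1(\lambda_0) = R_1(\lambda_0)(\lambda_0 I - A_1) = I_{X_1}$ using the first resolvent identity and Cauchy's theorem. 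The case $\lambda_0 = \infty$ is equivalent to $A_1$ being bounded, which is handled by part (ii). Conversely, if $\lambda_0 \in \sigma_1$ were in $\rho(A_1)$, then by applying the same construction to the complementary spectral set $\sigma'$ I would obtain an inverse of $\lambda_0 I - A$ on $X' = (I-P_1)\mathfrak{B}$, and pasting with $R(\lambda_0, A_1)$ on $X_1$ would contradict $\lambda_0 \in \sigma(A)$.

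For part (ii), when $\infty \notin \sigma_1$ the contour $\Gamma_1$ can be chosen as a compact curve in $\mathbb{C}$, so that the operator
\begin{equation*}
\widetilde{A}_1 := \frac{1}{2\pi i}\oint_{\Gamma_1} \mu\, R(\mu,A)\, d\mu
\end{equation*}
is a bounded operator on $\mathfrak{B}$ whose range lies in $X_1$. A direct calculation using the identity $\mu R(\mu,A)x = x + AR(\mu,A)x$ for $x \in D(A)$, together with the fact that $R(\mu,A)$ maps into $D(A)$, shows that $\widetilde{A}_1 x = A_1 x$ for every $x \in D(A_1)$, and that $\widetilde{A}_1 x \in D(A)$ for every $x \in \mathfrak{B}$. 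Hence $X_1 \subseteq D(A)$ and $A_1$ is bounded on $X_1$; iterating the argument shows $X_1 \subseteq D_n(A)$ for all $n \geq 1$.

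The main obstacle will be the careful bookkeeping for unbounded $A$: one must justify that the operator-valued Cauchy integrals are well-defined (e.g.\ as Bochner or Riemann integrals in the strong operator topology), verify the resolvent and commutation identities that permit pulling the closed operator $A$ through the integrals, and handle the point $\infty$ of the extended spectrum coherently in both directions of the equality $\sigma_1 = \hat{\sigma}(A_1)$.
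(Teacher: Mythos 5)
The paper offers no proof of this statement---it is quoted directly from Taylor \cite[Ch.V]{Tay_1958}---so there is nothing internal to compare against, and your outline via the Riesz--Dunford operational calculus is precisely the classical argument of that reference. Realizing $P_1$ as the Cauchy integral $\frac{1}{2\pi i}\oint_{\Gamma_1}R(\mu,A)\,d\mu$, producing $R(\lambda_0,A_1)$ by integrating $(\lambda_0-\mu)^{-1}R(\mu,A)$ and pasting with the complementary block for the reverse inclusion, and deducing $X_1\subset D_n(A)$ together with boundedness of $A_1$ from $AP_1=\frac{1}{2\pi i}\oint_{\Gamma_1}\mu R(\mu,A)\,d\mu$ when $\infty\notin\sigma_1$ is correct and is essentially the same proof.
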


\begin{definition}\label{defS2}
A closed operator $A$ with domain $D(A)$ that is dense in a Banach
space $\mathfrak B$ is called (see, for example,
\cite[Ch.I]{Hen_1981}) a sectorial operator\index{sectorial
operator} if for some $a\in \mathbb R $ and $\varphi \in
(0,\frac{\pi}{2})$ the sector
\begin{equation}\label{eqSO}
S_{a,\varphi}:=\{ \lambda \in \mathbb C,\ \varphi \le \vert
arg(\lambda - a)\vert \le \pi \}
\end{equation}
is contained in the resolvent set and for $\lambda \in
S_{a,\varphi} $
\begin{equation}
\Vert (\lambda I - A)^{-1}\Vert \le \frac{c}{\vert \lambda -
a\vert +1}.\nonumber
\end{equation}
\end{definition}

\begin{lemma}\label{lS1} Let $A$ be a sectorial operator and $\sigma(A)$ be its spectrum.
Then
$$
\mathbb M:=\hat{\sigma}(A)\cap i\mathbb R
$$
is a compact subset of
imaginary axis $i\mathbb R$.
\end{lemma}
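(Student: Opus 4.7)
The plan is to reduce the claim to an elementary planar geometric observation about the sector appearing in Definition~\ref{defS2}. First I would note that $\infty \notin i\mathbb R$, so $\mathbb M = \hat\sigma(A) \cap i\mathbb R = \sigma(A) \cap i\mathbb R$. Since the resolvent set of any closed operator is open, $\sigma(A)$ is closed in $\mathbb C$, and intersecting with the closed set $i\mathbb R$ shows that $\mathbb M$ is closed. It then remains only to prove that $\mathbb M$ is bounded.

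For boundedness I would invoke the sectorial hypothesis directly: since $S_{a,\varphi}$ lies in the resolvent set,
\[
\sigma(A) \subseteq \mathbb C \setminus S_{a,\varphi} = \{\lambda \in \mathbb C : |\arg(\lambda - a)| < \varphi\},
\]
the open sector of half-opening angle $\varphi < \pi/2$ with vertex $a$ around the positive real ray. For a purely imaginary point $\lambda = iy$, the complex number $\lambda - a = -a + iy$ must satisfy $|\arg(-a+iy)| < \varphi < \pi/2$, which in particular forces its real part $-a$ to be strictly positive, hence $a < 0$. Elementary trigonometry then gives $\arctan(|y|/|a|) < \varphi$, i.e.\ $|y| < |a|\tan\varphi$; in the remaining case $a \ge 0$ the intersection is simply empty, since every nonzero point on the imaginary axis has $|\arg(iy-a)| \ge \pi/2 > \varphi$. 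In either case $\mathbb M \subseteq \{iy : |y| \le |a|\tan\varphi\}$, a bounded subset of $i\mathbb R$.

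Combining closedness and boundedness in $i\mathbb R \cong \mathbb R$ yields the compactness of $\mathbb M$. I do not expect any genuine obstacle: the only point that needs attention is the small case analysis on the sign of $a$, which is handled by the fact that $\varphi < \pi/2$ keeps the spectral sector strictly off the imaginary axis unless its vertex lies in the open left half-plane.
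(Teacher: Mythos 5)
Your proof is correct and follows essentially the same route as the paper's: closedness of $\mathbb M$ as an intersection of closed sets, plus boundedness from the elementary geometry of the sector $S_{a,\varphi}$, with the minor case analysis on the sign of $a$ (and the vertex $\lambda=a$ itself, which your final containment still covers when $a=0$). In fact your bound $|y|\le |a|\tan\varphi$ is the correct one, whereas the paper encloses $\mathbb M$ in the segment with endpoints $(0,\pm|a|\sin\varphi)$, which is too small a set --- an inaccuracy that is immaterial for compactness but that your argument avoids.
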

\begin{proof}
Since $\mathbb M$ is the intersection of two closed sets, then it
is also closed. To finish the proof this statement it is
sufficient to note that $\mathbb M\subset [B,C]$, where $[B,C]$ is
the segment joining the points $B(0,|a|\sin \varphi)$ and
$C(0,-|a|\sin \varphi)$, $a\in \mathbb R$ and $0<\varphi <\pi/2$
from (\ref{eqSO}).
\end{proof}

\begin{definition}\label{defS3} A sectorial operator $A$ with
spectrum $\sigma(A)$ is said to be hyperbolic if $\sigma(A)\cap
i\mathbb R =\emptyset$.
\end{definition}

\begin{theorem}\label{thSO1} Let $A$ be a hyperbolic sectorial
operator and $\{e^{At}\}_{t\ge 0}$ be a semi-group of linear
operators generated by $A$. Then the following statements hold:
\begin{enumerate}
\item there exists a positive number $\alpha$ such that $|Re\
\lambda|\ge \alpha$ for any $\lambda \in \sigma(A)$; \item
$\sigma_{-}(A):=\sigma(A)\cap \{z\in \mathbb C|\ Re\ z <0\}$ is a
bounded spectral subset; \item $A_{-}:=P_{-}A$, where $P_{-}$
(respectively, $P_{+}$) is the spectral projection associated with
$\sigma_{-}(A)$ (respectively,
$\hat{\sigma}_{+}(A):=\hat{\sigma}(A)\setminus \sigma_{-}(A)$), is
a linear continuous on $\mathfrak B_{-}:=P_{-}(\mathfrak B)$
operator (respectively, is a sectorial operator on $\mathfrak
B_{+}:=P_{+}(\mathfrak B)$) and $\sigma(A_{-})=\sigma_{-}(A)$
(respectively, $\hat{\sigma}(A_{+})=\hat{\sigma}_{+}(A)$); \item
$Re\ \sigma(A_{-}) \le -\alpha$ (respectively, $Re\
\hat{\sigma}(A_{+})>\alpha$); \item there are positive constants
$\mathcal N$ and $\nu$ such that
\begin{equation}\label{eqSO1}
\| e^{A_{-}t}\|\le \mathcal N e^{-\nu t}\nonumber
\end{equation}
for any $t\in\mathbb R_{+}$; \item $A_{+}:=P_{+}A$ is a sectorial
operator, $\hat{\sigma}(A_{+})=\hat{\sigma}_{+}(A)$,
\begin{equation}\label{eqSO2}
\|e^{-A_{+}t}\|\le \mathcal N e^{-\nu t}\ \ \mbox{and}\ \
\|A_{+}e^{-A_{+}t}\|\le \mathcal N t^{-1}e^{-\nu t}\nonumber
\end{equation}
for any $t >0$.
\end{enumerate}
\end{theorem}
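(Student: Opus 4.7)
The plan is to proceed through items (i)--(vi) in order, each building on the previous. For (i) I would use the sectorial estimate directly: $\sigma(A)$ lies in the closed sector $\{\lambda : |\arg(\lambda-a)|\le \varphi\}$ with $\varphi<\pi/2$. Parametrizing $\lambda=a+re^{i\theta}$ with $|\theta|\le\varphi$, the relation $Re\,\lambda=a+r\cos\theta$ shows that $\{\lambda\in\sigma(A):|Re\,\lambda|\le 1\}$ is contained in the bounded set $\{r\le(|a|+1)/\cos\varphi\}$ and hence is compact in $\mathbb{C}$. Since $A$ is hyperbolic, this compact set is disjoint from $i\mathbb{R}$, so $|Re\,\lambda|$ attains a positive minimum $\alpha_0$ on it; the required $\alpha$ is $\min(\alpha_0,1)$. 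For (ii), imposing $Re\,\lambda<0$ in the same parametrization forces $r\cos\theta<-a$, which bounds $r\le |a|/\cos\varphi$ when $a\le 0$ and yields $\sigma_-(A)=\emptyset$ when $a>0$; either way $\sigma_-(A)$ is bounded. The spectral gap from (i) then gives that $\sigma_-(A)=\sigma(A)\cap\{Re\,z<-\alpha/2\}$ is simultaneously compact in $\mathbb{C}$ (so closed in $\hat{\mathbb{C}}$) and has complement $(\sigma(A)\cap\{Re\,z>\alpha/2\})\cup\{\infty\}$ that is closed in $\hat{\sigma}(A)$, so $\sigma_-(A)$ is a spectral set.

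For (iii) and (iv) I would apply Theorems \ref{thSS1} and \ref{thSS2} to the decomposition $\hat{\sigma}(A)=\sigma_-(A)\cup\hat{\sigma}_+(A)$. Since $\sigma_-(A)$ is a bounded spectral set avoiding $\infty$, Theorem \ref{thSS2}(ii) gives that $\mathfrak{B}_-\subset D(A)$ and $A_-:=P_-A$ is continuous on $\mathfrak{B}_-$, with $\sigma(A_-)=\sigma_-(A)$; the bound $Re\,\sigma(A_-)\le-\alpha$ is inherited from (i). The operator $A_+=P_+A$ on $\mathfrak{B}_+$ satisfies $\hat{\sigma}(A_+)=\hat{\sigma}_+(A)$ by Theorem \ref{thSS2}(i), and its sectoriality follows because $P_\pm$ commute with the resolvent, so $(\lambda I-A_+)^{-1}$ is the restriction of $(\lambda I-A)^{-1}$ to $\mathfrak{B}_+$ for $\lambda\in S_{a,\varphi}$, giving $\|(\lambda I-A_+)^{-1}\|\le \|P_+\|\cdot c/(|\lambda-a|+1)$. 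The bound $Re\,\sigma(A_+)\ge\alpha$ again comes from (i).

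For (v), since $A_-$ is a bounded operator with $\sigma(A_-)\subset\{Re\,z\le-\alpha\}$, for any $\nu\in(0,\alpha)$ I would pick a simple closed contour $\Gamma$ enclosing $\sigma(A_-)$ inside $\{Re\,z\le -\nu\}$ and represent
\begin{equation*}
e^{A_-t}=\frac{1}{2\pi i}\oint_\Gamma e^{\lambda t}(\lambda I-A_-)^{-1}\,d\lambda,
\end{equation*}
from which the estimate $\|e^{A_-t}\|\le\mathcal{N}e^{-\nu t}$ is immediate. For (vi), note $-A_+$ is sectorial on $\mathfrak{B}_+$ with $Re\,\sigma(-A_+)\le-\alpha$, so it generates an analytic semigroup $\{e^{-A_+t}\}_{t\ge 0}$; the two estimates $\|e^{-A_+t}\|\le\mathcal{N}e^{-\nu t}$ and $\|A_+e^{-A_+t}\|\le\mathcal{N}t^{-1}e^{-\nu t}$ are classical bounds obtainable from the Dunford integral over a keyhole contour in the resolvent sector and can be cited from \cite[Ch.I]{Hen_1981}.

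The main obstacle is step (iii), specifically verifying that $A_+$ inherits sectoriality: one needs the resolvent identity $(\lambda I-A)^{-1}P_+=P_+(\lambda I-A)^{-1}$ for $\lambda\in S_{a,\varphi}$ (which follows from the fact that $P_+$ is built from a Dunford integral of the resolvent over a contour around $\hat{\sigma}_+(A)$) and the boundedness of $P_+$, in order to transfer the sectorial estimate for $A$ to $A_+$. Once that is in hand, the semigroup estimates in (v) and (vi) are routine consequences of the boundedness of $A_-$ and of standard analytic semigroup theory, respectively.
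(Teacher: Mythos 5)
Your proof follows essentially the same route as the paper's: the sector geometry gives the spectral gap in (i) and the boundedness of $\sigma_{-}(A)$ in (ii), the spectral-set decomposition theorems (Theorems \ref{thSS1}--\ref{thSS2}) yield (iii)--(iv), and the semigroup bounds in (v)--(vi) come from the standard contour-integral estimates (the paper cites Daletskii--Krein for the bounded part and Henry, Theorem 1.5.3, for the sectorial part). The only slip is a convention reversal in your treatment of (vi): under the paper's Definition \ref{defS2} it is $A_{+}$ (with spectrum in a right-opening sector, $Re\ \sigma(A_{+})\ge \alpha$) that is sectorial and $-A_{+}$ that generates the analytic semigroup $e^{-A_{+}t}$, not ``$-A_{+}$ sectorial with $Re\ \sigma(-A_{+})\le -\alpha$''.
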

\begin{proof}
If we suppose that the first statement is false, then there exists
a sequence $\{\lambda_{n}\}\subseteq \hat{\sigma}(A)$ such that
$|Re\ \lambda_{n}|\to 0$ as $n\to \infty$ and, consequently, there
exists a positive number $b$ such that $|Re \ \lambda_{n}|\le b$
for any $n\in\mathbb N$. Note that the sequence $\{\lambda_{n}\}$
is a subset of rectangle with vertices $A_{1}(b,(|a|+b)\sin
\varphi)$, $A_{2}(b,-(|a|+b)\sin \varphi)$, $A_{3}(a,-(|a|+b)\sin
\varphi)$ and $A_{4}(a,(|a|+b)\sin \varphi)$. This means that the
sequence $\{\lambda_{n}\}$ is bounded and without loss of
generality we can assume that $\{\lambda_{n}\}$ is a convergent
sequence. Denote by $\lambda_{0}:=\lim\limits_{n\to
\infty}\lambda_{n}$, then $\lambda_{0}\in \hat{\sigma}(A)$ and
$Re\ \lambda_{0}=0$ hence $\lambda_{0}\in \sigma(A)\cap i\mathbb
R\subseteq M=\hat{\sigma}(A)\cap i\mathbb R$. The last relation
contradicts to our assumption that $\sigma(A)\cap i\mathbb
R=\emptyset$. The obtained contradiction proves our statement.

Note that the set $\sigma_{-}(A)$ is bounded because it is a
subset of triangle $\Delta_{B_1B_2B_3}$ with vertices
$B_{1}(-\alpha,|a|\sin \varphi)$, $B_{2}(-\alpha,-|a|\sin
\varphi)$ and $B_{3}(a,0)$. Now we will show that the set
$\sigma_{-}(A)$ is closed. Let $\lambda$ belongs to closure of the
set $\sigma_{-}(A)$ then there exists a sequence
$\{\lambda_{k}\}\subset \sigma_{-}(A)$ such that $\lambda_{k}\to
\lambda_{0}$ as $k\to \infty$. Since the set $\sigma(A)\subset
\hat{\sigma}(A)$ then $\lambda_{0}\in \hat{\sigma}(A)$ because
$\hat{\sigma}(A)$ is closed. According to the second statement $Re
\ \lambda_{k}\le -\alpha$ for any $k\in \mathbb N$ and,
consequently, $Re \ \lambda_{0}\le -\alpha$. Since
$\hat{\sigma}(A)=\sigma_{-}(A)\coprod \sigma_{+}(A)\cup
\{\infty\}$ and $Re \ \lambda_{0}\le -\alpha$ we concludes that
$\lambda_{0}\in \sigma_{-}(A)$. Finally we remains to prove that
the set $\hat{\sigma}_{+}(A)=\hat{\sigma}(A)\setminus
\sigma_{-}(A)$ is closed. Let $\lambda_{0}$ belongs to the closure
of $\hat{\sigma}(A)\setminus \sigma_{-}(A)$. Logically two cases
are possible:
\begin{enumerate}
\item $\lambda_{0}=\infty$, then evidently it belongs to
$\hat{\sigma}(A)\setminus \sigma_{-}(A)=\sigma_{+}(A)\cup
{\infty}$; \item $\lambda_{0}\not= \infty$, then $\lambda_{0}\in
\mathbb C$ and there exists a sequence $\{\lambda_{k}\}\subseteq
\sigma_{+}(A)$ such that $\lambda_{0}=\lim\limits_{k\to
\infty}\lambda_{k}$. Since $\lambda_{k}\in \sigma_{+}(A)$ then $Re
\ \lambda_{k}\ge \alpha$ for any $k\in\mathbb N$ and,
consequently, $Re \ \lambda_{0}\ge \alpha$. Since
$\hat{\sigma}(A)=\sigma_{-}(A)\coprod \sigma_{+}(A)\cup
\{\infty\}$, $\{\lambda_{k}\}\subset \sigma_{+}(A)\subset
\hat{\sigma}(A)$ and $\hat{\sigma}(A)$ is closed, then
$\lambda_{0}\in \hat{\sigma}(A)\cap \mathbb C$ and $Re \
\lambda_{0}\ge \alpha$ and, consequently, $\lambda_{0}\in
\sigma_{+}(A)\subseteq \hat{\sigma}(A)\setminus \sigma_{-}(A)$.
Thus the set $\hat{\sigma}_{+}(A)$ is a spectral set.
\end{enumerate}

The third statement follows from the second one and Theorems
\ref{thSS1} and \ref{thSS2}.

The fourth statement follows from the first and third statements.

The fifth statement follows from the third statement and Theorem
4.1 from \cite[Ch.I]{Dal}.

The sixth statement follows from the third statement and Theorem
1.5.3 from \cite[Ch.I]{Hen_1981}.
\end{proof}

\begin{remark}\label{remP1} 1. $AP_{\pm}x=P_{\pm}Ax$ for any $x\in
D(A)$.

2. The operator $A_{-}$ is defined only on $D(A)$, but taking into
account that $D(A)$ is dense on $\mathfrak B$ the operator $A_{-}$
may be extended by continuity on $\mathfrak B$.
\end{remark}

\begin{coro}\label{corSO1} Let $-A$ be a sectorial hyperbolic
operator. Then the following statements hold:
\begin{enumerate}
\item there exists a positive number $\alpha$ such that $|Re\
\lambda|\ge \alpha$ for any $\lambda \in \sigma(A)$; \item
$\sigma_{+}(A):=\sigma(A)\cap \{z\in \mathbb C|\ Re\ z >0\}$ is a
bounded spectral subset; \item $A_{+}:=P_{+}A$, where $P_{+}$
(respectively, $P_{-}$) is the spectral projection associated with
$\sigma_{+}(A)$ (respectively,
$\hat{\sigma}_{-}(A):=\hat{\sigma}(A)\setminus \sigma_{+}(A)$), is
a linear continuous on $X_{+}:=P_{+}(X)$ operator (respectively,
is a sectorial operator on $X_{-}:=P_{-}(X)$) and
$\sigma(A_{+})=\sigma_{+}(A)$ (respectively,
$\hat{\sigma}(A_{-})=\hat{\sigma}_{-}(A)$); \item $Re\
\sigma(A_{+}) \ge \alpha$ (respectively, $Re\
\hat{\sigma}(A_{-})\le -\alpha$); \item there are positive
constants $\mathcal N$ and $\nu$ such that
\begin{equation}\label{eqSO1.1}
\| e^{-A_{+}t}\|\le \mathcal N e^{-\nu t}
\end{equation}
for any $t\in\mathbb R_{+}$; \item
$\hat{\sigma}(A_{-})=\hat{\sigma}_{-}(A)$, $-A_{-}$ is a sectorial
operator with $Re\ \sigma(-A_{-})\ge \alpha$ and there exist
positive constants $\mathcal N$ and $\nu$ such that
\begin{equation}\label{eqSO2.1}
\|e^{A_{-}t}\|\le \mathcal N e^{-\nu t}\ \ \mbox{and}\ \
\|A_{-}e^{A_{-}t}\|\le \mathcal N  t^{-1}e^{-\nu t}
\end{equation}
for any $t >0$.
\end{enumerate}
\end{coro}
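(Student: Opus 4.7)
The plan is to derive Corollary \ref{corSO1} from Theorem \ref{thSO1} by applying the latter to the auxiliary operator $B := -A$, which is sectorial and hyperbolic by hypothesis. Theorem \ref{thSO1} then yields a spectral decomposition $\hat{\sigma}(B)=\sigma_{-}(B)\cup \hat{\sigma}_{+}(B)$, associated projections, and semigroup estimates; the remaining task is to relabel these conclusions through the identity $\sigma(-A)=-\sigma(A)$.

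First I would record that $\sigma(-A)=-\sigma(A)$ implies $\sigma_{-}(B)=-\sigma_{+}(A)$ and $\hat{\sigma}_{+}(B)=\{\infty\}\cup(-\sigma_{-}(A))$. Statement (i) is immediate since $|\mathrm{Re}\,\lambda|$ is unchanged under $\lambda\mapsto -\lambda$. Statement (ii) follows because $\sigma_{+}(A)$ is the image of the bounded spectral set $\sigma_{-}(B)$ under $\lambda\mapsto -\lambda$, and spectral sets are preserved by this homeomorphism of $\hat{\mathbb{C}}$. The spectral projection produced by Theorem \ref{thSO1} for $\sigma_{-}(B)$ coincides with $P_{+}$ of the corollary (and its complement with $P_{-}$), by uniqueness of the projection associated with a prescribed spectral set.

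Next I would identify $A_{+}=P_{+}A=-P_{-}B=-B_{-}$ and $A_{-}=P_{-}A=-P_{+}B=-B_{+}$ on the respective invariant subspaces $\mathfrak{B}_{+}=X_{+}$ and $\mathfrak{B}_{-}=X_{-}$. Theorem \ref{thSO1}(iii)-(iv) applied to $B$ then yields statement (iii): $A_{+}=-B_{-}$ is continuous on $\mathfrak{B}_{+}$ with $\sigma(A_{+})=-\sigma(B_{-})=\sigma_{+}(A)$, while $-A_{-}=B_{+}$ is sectorial on $\mathfrak{B}_{-}$ with $\hat{\sigma}(-A_{-})=\hat{\sigma}_{+}(B)$, so $\hat{\sigma}(A_{-})=\hat{\sigma}_{-}(A)$. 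Statement (iv) follows in the same way, using $\mathrm{Re}\,\sigma(A_{+})=-\mathrm{Re}\,\sigma(B_{-})\ge \alpha$ and $\mathrm{Re}\,\hat{\sigma}(A_{-})=-\mathrm{Re}\,\hat{\sigma}(B_{+})\le -\alpha$.

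For the semigroup bounds: since $A_{+}=-B_{-}$ one has $e^{-A_{+}t}=e^{B_{-}t}$, and Theorem \ref{thSO1}(v) delivers $\|e^{-A_{+}t}\|\le \mathcal{N}e^{-\nu t}$, i.e.\ (\ref{eqSO1.1}). Likewise $-A_{-}=B_{+}$ gives $e^{A_{-}t}=e^{-B_{+}t}$ and $A_{-}e^{A_{-}t}=-B_{+}e^{-B_{+}t}$, so Theorem \ref{thSO1}(vi) provides both estimates in (\ref{eqSO2.1}). The main obstacle is purely bookkeeping: keeping the identifications of projections, signs, and the symbols $\sigma_{\pm}$ consistent between Theorem \ref{thSO1} (applied to $B$) and the statement of Corollary \ref{corSO1}; no new analytic content is required beyond the spectral-mapping identity $\sigma(-A)=-\sigma(A)$ and the uniqueness of the spectral decomposition from Theorems \ref{thSS1}-\ref{thSS2}.
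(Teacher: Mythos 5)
Your proposal is correct and follows essentially the same route as the paper: apply Theorem \ref{thSO1} to $-A$, use $\sigma(-A)=-\sigma(A)$ together with the identifications $(-A)_{-}=-A_{+}$ and $(-A)_{+}=-A_{-}$, and translate the conclusions back. In fact your write-up is somewhat more complete than the paper's, which only spells out items (i), (v) and (vi) explicitly.
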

\begin{proof} Note that $\sigma(A)=-\sigma(-A)$ and by Theorem
\ref{thSO1} (item (i)) there exists a positive number $\alpha >0$
 such that $|Re\ \lambda|\ge \alpha$ for any $\lambda \in
 \sigma(A)$.

 On the other hand $\sigma(A)=\sigma_{-}(A)\cup \sigma_{+}(A)$ and
 $A=(A_{-},A_{+})$, where $A_{\pm}=P_{\pm}(A)$. Since
 $-A=(-A_{+},-A_{-})$ (that is  $(-A)_{-}=-A_{+}$ and
 $(-A)_{+}=-A_{-}$) then by Theorem \ref{thSO1}, items (v)
 (respectively, Theorem \ref{thSO1}, item(vi)) there exist
 positive constants $\mathcal N$ and $\nu$ such that
 (\ref{eqSO1.1}) (respectively, (\ref{eqSO2.1})) takes place.
\end{proof}

\begin{lemma}\label{lC01} Let $A:D(A)\to \mathfrak B$ be the
infinitesimal generator of $C_0$-semigroup $\{U(t)\}_{t\ge 0}=
\{e^{At}\}_{t\ge 0}$, $\sigma \subset \sigma(A)$ be a
spectral set of $A$ and $P$ be the projection associated with
$\sigma$. Then the following statements hold:
\begin{enumerate}
\item $PA=AP$;  \item $Pe^{At}P=e^{PAt}P$ for any $t\in\mathbb
R_{+}$; \item $Pe^{At}=e^{At}P$ for any $t\in\mathbb R_{+}$.
\end{enumerate}
\end{lemma}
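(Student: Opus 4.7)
My plan is to establish the three statements in the order (i), (iii), (ii), since (iii) follows cleanly from (i) by a uniqueness argument and (ii) then falls out as a corollary of (iii).

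For (i), I would invoke Theorem \ref{thSS1} applied to the pair of complementary spectral sets $\sigma$ and $\hat{\sigma}(A)\setminus\sigma$ with associated projections $P$ and $I-P$. That theorem tells me $D(A)$ is invariant under $P$ and that the ranges $PX$ and $(I-P)X$ are invariant under $A$. Given $x\in D(A)$, both $Px$ and $(I-P)x$ lie in $D(A)$, so $APx\in PX$ and $A(I-P)x\in (I-P)X$. Writing $Ax=APx+A(I-P)x$ and applying $P$, the second term is annihilated and one reads off $PAx=APx$.

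For (iii), I would rely on the uniqueness of classical solutions of the abstract Cauchy problem $\dot w=Aw$. Fix $x\in D(A)$ and put $u(t):=Pe^{At}x$ and $v(t):=e^{At}Px$. Since $e^{At}$ preserves $D(A)$ and $P$ preserves $D(A)$, both curves lie in $D(A)$ for every $t\ge 0$. Using (i), I would compute
\begin{equation*}
\dot u(t)=PAe^{At}x=APe^{At}x=Au(t),\qquad \dot v(t)=Ae^{At}Px=Av(t),
\end{equation*}
with the common initial value $u(0)=v(0)=Px$. Uniqueness then gives $u\equiv v$ on $D(A)$, and because $Pe^{At}$ and $e^{At}P$ are bounded and $D(A)$ is dense in $\mathfrak B$, the identity extends to all of $\mathfrak B$.

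For (ii), I would apply (iii) with $Px$ in place of $x$ to obtain $Pe^{At}(Px)=e^{At}P(Px)=e^{At}Px$, hence $Pe^{At}P=e^{At}P$. It then remains to identify $e^{At}P$ with $e^{PAt}P$. The image of $P$ lies in $PX$, which is $A$-invariant, and on $PX\cap D(A)$ the operator $A$ coincides with $PA$; the restricted semigroup on $PX$ is therefore generated by $PA$ restricted to $PX$, which is precisely what $e^{PAt}$ should denote. Matching the semigroups applied to elements of $PX$ yields $e^{At}P=e^{PAt}P$.

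The main point of care will be fixing the meaning of $e^{PAt}$. If $\sigma$ does not contain $\infty$ then by Theorem \ref{thSS2} the restriction $A|_{PX}$ is bounded, so $e^{PAt}$ is the standard power-series exponential and the identification with $e^{At}|_{PX}$ is automatic. Otherwise, $e^{PAt}$ has to be interpreted as the $C_0$-semigroup on $PX$ generated by the part of $A$ in $PX$, whose existence and coincidence with $e^{At}|_{PX}$ are ensured by the $P$-invariance of the semigroup established in (iii).
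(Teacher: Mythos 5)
Your proposal is correct and follows essentially the same route as the paper: part (i) comes from the spectral decomposition (the paper simply cites the operational calculus for closed operators, where you spell out the short argument from Theorem \ref{thSS1}), and parts (ii)--(iii) are obtained by viewing both sides as classical solutions of the abstract Cauchy problem $\dot w=Aw$ with the same initial value, invoking uniqueness on a dense domain, and then extending by continuity of the bounded operators involved. The only cosmetic differences are that you work on $D(A)$ where the paper uses $D(A^2)$, and you deduce (ii) from (iii) rather than running the uniqueness argument for both identities in parallel.
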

\begin{proof}
The first statement directly follows from the operational calculus
for unbounded closed operators (see, for example,
\cite[Ch.VII]{DS_1966} and \cite[Ch.V]{Tay_1958}).

%Since the domain $D(\mathcal A)$ of operator $\mathcal A$ is dense
%in $\mathfrak B$, then the second statement follows from the fact
%that the operator $\mathcal A$ commutes with the semigroup $U(t)$
%on $D(\mathcal A)$ (see, for example, \cite[Ch.I,p.27]{Kre_1963}).

Let $x_0\in \mathfrak B$. Consider the functions
$u(t,x_0):=Pe^{At}Px_0$ (respectively, $u(t,x_0):=Pe^{At}x_0$) and
$v(t,x_0))=e^{PAt}Px_0$ (respectively $v(t,x_0)=e^{At}Px_0$) for
any $t\in\mathbb R_{+}$. If $x\in D(A^2)$, then the functions $u$
and $v$ are continuously differentiable solutions of Cauchy
problem \cite[Ch.I]{Kre_1963}
\begin{equation}\label{eqC_01}
x'=Ax,\ \ x(0)=Px_0\nonumber
\end{equation}
and, consequently,
\begin{equation}\label{eqC_02}
u(t,x_0)=v(t,x_0)
\end{equation}
for any $x_0\in D(A^2)$ and $t\ge 0$.

Let now $x_0\in \mathfrak B$. Since $D(A^2)$ is dense in
$\mathfrak B$ (see, for example, \cite[Ch.I]{Hen_1981} or
\cite[Ch.I]{Kre_1963}), then there exists a sequence
$\{x^{k}_{0}\}\subset D(A^{2})$ such that $x_{0}^{k}\to x_0$ as
$k\to \infty$. By (\ref{eqC_02}) we have
\begin{equation}\label{eqC_03}
u(t,x_0^{k})=v(t,x_0^{k})
\end{equation}
for any $k\in\mathbb N$ and $t\ge 0$. Passing to the limit in
(\ref{eqC_03}) and taking into consideration that the operators
$Pe^{At}P$ (respectively, $Pe^{At}$) and $e^{PA t}P$
(respectively, $e^{At}P$) acting on the space $\mathfrak B$ are
continuous we obtain
\begin{equation}\label{eqC_04}
u(t,x_0)=v(t,x_0)\nonumber
\end{equation}
for any $x_0\in \mathfrak B$ and $t\ge 0$, i.e., $Pe^{At}P=e^{PA
t}P$ (respectively, $Pe^{At}=e^{At}P$) for any $t\ge 0$. Lemma is
completely proved.
\end{proof}

\begin{coro}\label{corC1} Let $A:D(A)\to \mathfrak B$ be a
hyperbolic sectorial operator and $P_{\pm}$ be the spectral
projection of $A$. Then the following statements hold:
\begin{enumerate}
\item $P_{\pm}A=AP_{\pm}$; \item $P_{\pm}e^{At}=e^{At}P_{\pm}$ for
any $t\in\mathbb R_{+}$; \item
$P_{\pm}e^{At}P_{\pm}=e^{P_{\pm}At}P_{\pm}$ for any $t\in\mathbb
R_{+}$.
\end{enumerate}
\end{coro}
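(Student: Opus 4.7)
The plan is to recognize that Corollary \ref{corC1} is just the two-sided application of Lemma \ref{lC01} to the pair of spectral projections $P_-$ and $P_+$, so essentially nothing new needs to be proved beyond verifying that the hypotheses of Lemma \ref{lC01} are in place for each of them. Concretely, I would invoke Theorem \ref{thSO1}, items (ii) and (iii), which tell us that $\sigma_-(A)$ is a (bounded) spectral set of $A$ and that its complement $\hat\sigma_+(A) = \hat\sigma(A)\setminus\sigma_-(A)$ is likewise a spectral set (this closedness is established inside the proof of Theorem \ref{thSO1}). Hence $P_-$ and $P_+$ are exactly the projections associated with two disjoint spectral sets in the sense of Definition \ref{defSS1} and Theorem \ref{thSS1}.

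With this observation in hand, the proof writes itself. First I would apply Lemma \ref{lC01} with $\sigma := \sigma_-(A)$ and $P := P_-$: part (i) of the lemma immediately gives $P_-A = AP_-$, part (iii) gives $P_-e^{At} = e^{At}P_-$ for $t\in\mathbb R_+$, and part (ii) gives $P_-e^{At}P_- = e^{P_-At}P_-$. Then I would repeat the argument with $\sigma := \hat\sigma_+(A)$ and $P := P_+$ to obtain the corresponding three equalities for $P_+$. Combining the two cases yields the three listed statements for $P_\pm$.

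The only thing that needs a brief comment, and where I would exercise the most care, is that Lemma \ref{lC01} is formulated for the infinitesimal generator of a $C_0$-semigroup, while here $A$ is a hyperbolic sectorial operator. However, a sectorial operator in the sense of Definition \ref{defS2} generates an (analytic) $C_0$-semigroup $\{e^{At}\}_{t\ge 0}$ on $\mathfrak B$, so Lemma \ref{lC01} applies verbatim. Apart from this bookkeeping, there is no real obstacle: the commutation relations propagate mechanically from the operational calculus used in the proof of Lemma \ref{lC01}, and the identification of $P_\pm$ as spectral projections for spectral sets of $A$ is precisely the content of Theorem \ref{thSO1}.
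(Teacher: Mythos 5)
Your proposal is correct and follows the same route the paper intends: the paper states Corollary \ref{corC1} without any written proof, treating it as an immediate specialization of Lemma \ref{lC01} to the two spectral projections $P_{\pm}$, with Theorem \ref{thSO1} supplying the fact that $\sigma_{-}(A)$ and $\hat{\sigma}_{+}(A)$ are disjoint spectral sets in the sense of Definition \ref{defSS1}. Your additional remark that the sectorial hypothesis provides the $C_{0}$-semigroup needed to invoke Lemma \ref{lC01} is precisely the bookkeeping the paper leaves implicit.
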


Let $\mathcal A: D(\mathcal A)\to \mathfrak B$ be a linear closed
operator acting from $D(\mathcal A)$ to $\mathfrak B$ generating a
$C_0$-semigroup $\{U(t)\}_{t\ge 0}$ (notation $\{U(t)\}_{t\ge
0}=\{e^{\mathcal A t}\}_{t\ge 0}$) of linear bounded operators on
the space $\mathfrak B$.

\begin{lemma}\label{lC_01} Assume that $-\mathcal A$ is a
sectorial hyperbolic operator, then the following statements hold:
\begin{enumerate}
\item the semigroup $\{U(t)\}_{t\ge 0}$ is hyperbolic; \item a
Green's function $G(t)$ for hyperbolic semigroup $U(t)$ is defined
by
\begin{equation}\label{eqLS2_1}
 G(t):=\left\{\begin{array}{ll}
&\!\! e^{A_{-}t} P_{-},\;\ \ \ \ \ \mbox{if}\;\ t\ge 0 \\[2mm]
&\!\! -e^{-A_{+}t}P_{+}, \;\ \mbox{if}\;\ t<0\ .
\end{array}
\right.\nonumber
\end{equation}
\end{enumerate}
\end{lemma}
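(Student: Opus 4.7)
The plan is to verify directly the three conditions of Definition \ref{defH1} using the spectral splitting that Corollary \ref{corSO1} provides for the sectorial hyperbolic operator $-\mathcal{A}$. The projections $\mathcal{P}$ and $\mathcal{Q}$ in that definition will be taken to be $P_-$ and $P_+$, respectively.

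First I would unpack Corollary \ref{corSO1} applied to $-\mathcal{A}$, which supplies the spectral projections $P_\pm$ with $\mathfrak{B} = \mathfrak{B}_- \oplus \mathfrak{B}_+$, together with a bounded (continuous) operator $\mathcal{A}_+ = P_+\mathcal{A}$ on $\mathfrak{B}_+$ whose spectrum has real part $\ge \alpha$, and a sectorial operator $\mathcal{A}_-$ on $\mathfrak{B}_-$ whose spectrum has real part $\le -\alpha$. The crucial estimates $\|e^{\mathcal{A}_- t}\| \le \mathcal{N} e^{-\nu t}$ and $\|e^{-\mathcal{A}_+ t}\| \le \mathcal{N} e^{-\nu t}$ for $t \ge 0$ are then in hand.

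Next I would check the three conditions of Definition \ref{defH1}. Commutativity $U(t) P_\pm = P_\pm U(t)$ is immediate from Corollary \ref{corC1}(ii). Invertibility of $U(t)$ on $\mathrm{Im}\,\mathcal{Q} = \mathfrak{B}_+$ follows because $\mathcal{A}_+$ is a bounded operator, hence generates a norm-continuous group $\{e^{\mathcal{A}_+ t}\}_{t\in\mathbb{R}}$ of bounded operators; by Corollary \ref{corC1}(iii) one has $U(t)|_{\mathfrak{B}_+} = e^{\mathcal{A}_+ t}|_{\mathfrak{B}_+}$ for $t \ge 0$, whose inverse is $e^{-\mathcal{A}_+ t}|_{\mathfrak{B}_+}$. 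The first exponential estimate then reads $|U(t) P_- x| = |e^{\mathcal{A}_- t} P_- x| \le \mathcal{N}\|P_-\| e^{-\nu t}|x|$ for $t \ge 0$. For the backward estimate, I would write $U_\mathcal{Q}(t) = [U(-t) P_+]^{-1} = e^{\mathcal{A}_+ t} P_+$ for $t < 0$, and then, setting $s := -t > 0$, estimate $\|e^{\mathcal{A}_+ t} P_+\| = \|e^{-\mathcal{A}_+ s} P_+\| \le \mathcal{N}\|P_+\| e^{\nu t}$ via Corollary \ref{corSO1}(v). Absorbing the projection norms into $\mathcal{N}$ yields the bounds required in Definition \ref{defH1}.

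Finally, the Green's function formula falls out by substitution into the definition given just before Remark \ref{remG1}: for $t \ge 0$, $G(t) = U(t)\mathcal{P} = U(t)P_- = e^{\mathcal{A}_- t} P_-$; for $t < 0$, $G(t) = -U_\mathcal{Q}(t) = -e^{\mathcal{A}_+ t} P_+$, which is the claimed formula under the convention of extending the semigroup generated by $-\mathcal{A}_+$ to a group and interpreting $e^{-\mathcal{A}_+ t}$ at negative $t$ accordingly. I do not anticipate any real obstacle: the argument is entirely bookkeeping with the spectral decomposition and the exponential bounds already collected in Corollaries \ref{corSO1} and \ref{corC1}. The one point requiring care is the correct interpretation of $e^{-\mathcal{A}_+ t}$ for $t<0$, which is unambiguous only because $\mathcal{A}_+$ is bounded and thus generates a norm-continuous group.
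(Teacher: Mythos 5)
Your argument is correct and follows essentially the same route as the paper's proof: take $\mathcal P=P_{-}$, $\mathcal Q=P_{+}$, compute $U(t)\mathcal P=e^{A_{-}t}P_{-}$ and $U_{\mathcal Q}(t)=[U(-t)P_{+}]^{-1}=e^{A_{+}t}P_{+}$, and invoke the exponential bounds supplied by Theorem \ref{thSO1} (equivalently Corollary \ref{corSO1}), with your extra care about invertibility on $\mathfrak B_{+}$ and the projection norms being harmless refinements. The expression $-e^{A_{+}t}P_{+}$ you derive for $t<0$ is exactly what the paper's own proof obtains and what is used later (e.g.\ in Lemma \ref{lC02}); the exponent $-A_{+}t$ in the displayed statement of the lemma appears to be a typo, so no special ``convention'' is needed to reconcile your formula with it.
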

\begin{proof} Since $\sigma(A)=-\sigma(-A)$, then by Theorem
\ref{thSO1} we have:
\begin{enumerate}
\item there exists a positive number $\alpha$ such that $|Re\
\lambda|\ge \alpha$ for any $\lambda \in \sigma(A)$; \item
$\sigma_{+}(\mathcal A)=-\sigma_{-}(-\mathcal A)$ is a bounded
spectral subset; \item $A_{+}:=P_{+}A$, where $P_{+}$ is the
spectral projection associated with $\sigma_{+}(A)$, is a linear
continuous on $\mathfrak B_{+}:=P_{+}(\mathfrak B)$ operator with
$Re\ \sigma(A_{+})\le -\alpha$; \item there are positive constants
$\mathcal N$ and $\nu$ such that
\begin{equation}\label{eqS0O1}
\| e^{-A_{+}t}\|\le \mathcal N e^{-\nu t}\nonumber
\end{equation}
for any $t\in\mathbb R_{+}$; \item
$\sigma_{-}(A):=-\sigma_{+}(-A)$ and
$\hat{\sigma}_{-}(A):=-\hat{\sigma}_{+}(-A)$; \item $Re\
\hat{\sigma}(A_{-})>\alpha$; \item $A_{+}:=P_{+}A,\
A_{+}=-(-A)_{-}$ and $(-A)_{-}$ is a sectorial operator with $Re\
\sigma(-A)_{-}\ge \alpha$,
$\hat{\sigma}(A_{+})=-\hat{\sigma}_{+}(-A)$ and there exist
positive constants $\mathcal N$ and $\nu$ such that
\begin{equation}\label{eqSO02}
\|e^{A_{-}t}\|\le \mathcal N e^{-\nu t}\ \ \mbox{and}\ \
\|A_{-}e^{A_{-}t}\|\le \mathcal N t^{-1}e^{-\nu t}\nonumber
\end{equation}
for any $t >0$.
\end{enumerate}

Let $\mathcal P=P_{-}$ and $\mathcal Q=I-\mathcal
P=I-P_{-}=P_{+}$, then
\begin{equation}\label{eqGr1}
U(t)\mathcal P=e^{At}P_{-}=e^{A_{-}t}P_{-}
\end{equation}
and from
(\ref{eqSO02}) we have $\|U(t)\mathcal P\|\le \mathcal N e^{-\nu
t}$ for any $t\ge 0$. On the other hand if $t<0$, then
\begin{equation}\label{eqGr2}
U_{\mathcal Q}(t)=[U(-t)\mathcal Q]^{-1}=[e^{\mathcal
A(-t)}P_{+}]^{-1}=[e^{A_{+}(-t)}P_{+}]^{-1}=e^{A_{+}t}P_{+}
\end{equation}
and by (\ref{eqS0O1}) we have $\|U_{\mathcal Q}(t)\|\le \mathcal N
e^{\nu t}$ for any $t<0$. Thus the semigroup $\{U(t)\}_{t\ge 0}$
is hyperbolic.

The second statement of Lemma follows from (\ref{eqGr1}) and
(\ref{eqGr2}). Lemma is completely proved.
\end{proof}

\begin{lemma}\label{lC02}
Let $-A$ be a hyperbolic sectorial operator acting in $\mathfrak
B$ with domain $D(A)$ and $f\in C_{b}(\mathbb R_{+},\mathfrak B)$.
Then equation (\ref{eqSL1.1}) has a unique solution $\varphi \in
C_{b}(\mathbb R_{+},\mathfrak B)$ with $P_{-}\varphi(0)=0$ and
\begin{equation}\label{eqC02}
\varphi(t)= \int_{0}^{+\infty}G(t-\tau)f(\tau)d\tau
=\int_{0}^{t}e^{A_{-}(t-\tau)}f_{-}(\tau)d\tau -
\int_{t}^{+\infty}e^{A_{+}(t-\tau)}f_{+}(\tau)d\tau .\nonumber
\end{equation}
\end{lemma}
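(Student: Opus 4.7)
The plan is to combine Lemma \ref{lC_01}, which identifies the $C_0$-semigroup generated by $A$ as hyperbolic and writes down its Green's function explicitly, with Theorem \ref{thLS11_2}, which already supplies existence, uniqueness and an integral representation for bounded solutions whenever the generating semigroup is hyperbolic.

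First I would observe that, by Lemma \ref{lC_01}, the hypothesis that $-A$ is a hyperbolic sectorial operator ensures that the $C_0$-semigroup $\{U(t)\}_{t\ge 0}=\{e^{At}\}_{t\ge 0}$ is hyperbolic in the sense of Definition \ref{defH1} with the associated projection $\mathcal{P}=P_{-}$ and the explicit Green's function
\begin{equation*}
G(t)=\begin{cases} e^{A_{-}t}P_{-}, & t\ge 0,\\ -e^{A_{+}t}P_{+}, & t<0,\end{cases}
\end{equation*}
where $A_{\pm}=P_{\pm}A$. Having verified the hyperbolicity hypothesis of Theorem \ref{thLS11_2}, I would then directly apply item (i) of that theorem: for every $f\in C_{b}(\mathbb{R}_+,\mathfrak{B})$ there exists a unique bounded solution $\varphi\in C_{b}(\mathbb{R}_+,\mathfrak{B})$ of \eqref{eqSL1.1} satisfying $P_{-}\varphi(0)=0$, and it is given by
\begin{equation*}
\varphi(t)=\int_{0}^{+\infty}G(t-\tau)f(\tau)\,d\tau.
\end{equation*}

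The remaining task is the algebraic rewriting. I would split the integral at $\tau=t$, which yields
\begin{equation*}
\varphi(t)=\int_{0}^{t}G(t-\tau)f(\tau)\,d\tau+\int_{t}^{+\infty}G(t-\tau)f(\tau)\,d\tau,
\end{equation*}
and then substitute the two pieces of $G$: on $[0,t]$ we have $t-\tau\ge 0$, so $G(t-\tau)=e^{A_{-}(t-\tau)}P_{-}$, while on $(t,+\infty)$ we have $t-\tau<0$, so $G(t-\tau)=-e^{A_{+}(t-\tau)}P_{+}$. Using that $P_{\pm}$ commutes with integration (and with the exponentials, by Corollary \ref{corC1}) and writing $f_{\pm}:=P_{\pm}f$, the formula collapses to
\begin{equation*}
\varphi(t)=\int_{0}^{t}e^{A_{-}(t-\tau)}f_{-}(\tau)\,d\tau-\int_{t}^{+\infty}e^{A_{+}(t-\tau)}f_{+}(\tau)\,d\tau,
\end{equation*}
which is the claimed identity.

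There is essentially no serious obstacle: the whole proof is a bookkeeping exercise given the preparatory material. The only point that requires a moment's care is the sign and exponent convention for $e^{A_{+}(t-\tau)}$ when $t-\tau<0$: one must interpret it through the group structure on $\mathfrak{B}_{+}$ supplied by Corollary \ref{corSO1} (since $A_{+}$ is bounded on $\mathfrak{B}_{+}$ under the present hypothesis), so that $e^{A_{+}(t-\tau)}P_{+}$ is the well-defined bounded operator whose norm decays like $e^{-\nu(\tau-t)}$, matching the second branch of the Green's function.
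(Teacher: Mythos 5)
Your proposal is correct and follows exactly the route the paper takes: its proof of Lemma \ref{lC02} is the one-line citation of Theorem \ref{thLS11_2} (item (i)) together with Lemma \ref{lC_01}, and your splitting of the integral at $\tau=t$ with the substitution of the two branches of $G$ is just the bookkeeping the paper leaves implicit. Your form $G(t)=-e^{A_{+}t}P_{+}$ for $t<0$ is the one actually derived in the proof of Lemma \ref{lC_01} and the one consistent with the formula being proved, so no issue there.
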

\begin{proof} This statement follows from Theorem \ref{thLS11_2} (item $(i)$) and Lemma \ref{lC_01}.
\end{proof}

\section{Averaging Principle on Semi-Axis for Semi-Linear
Differential Equations with Stationary Linear Part}\label{S6}

\subsection{Linear Differential Equations}\label{S6.1}

Let $\varepsilon_{0}$ be some fixed positive number. Consider the
equation
\begin{equation}\label{eqALN1}
x'=\varepsilon (\mathcal A x+f(t)),
\end{equation}
where $\mathcal A : D(\mathcal A)\to \mathfrak B$, $f\in C(\mathbb
R_{+},\mathfrak B)$ and $0<\varepsilon \le \varepsilon_0$.

\begin{definition}\label{defAL1}\rm
Let $f:\mathbb R_{+}\times (0,\varepsilon_0]\to \mathfrak B$.
Following \cite[Ch.IV]{KBK} we say that {\em $f(t;\varepsilon)$
integrally converges to $0$} if for any $L>0$ we have
\begin{equation}\label{eqAL1}
\lim\limits_{\varepsilon \to 0}\sup\limits_{|t-s|\le L,\ t,s\in
\mathbb R_{+}} |\int_{s}^{t}f(\tau;\varepsilon)d\tau |=0.\nonumber
\end{equation}
If additionally there exists a constant $m>0$ such that
\begin{equation}\label{eqAL2}
|f(t;\varepsilon)|\le m \ \nonumber
\end{equation}
for any $t\in\mathbb R_{+}$ and $0<\varepsilon\le
\varepsilon_{0}$, then we say that $f(t;\varepsilon)$ {\em
correctly converges to} $0$ as $\varepsilon \to 0$.
\end{definition}

\begin{lemma}\label{lL1_0}\cite[ChIV]{KBK} \rm
Let $\mathbb T =\mathbb R$ or $\mathbb R_{+}$. If $f\in C(\mathbb
T,\mathfrak B)$ and
\begin{equation}\label{AL3}
\lim\limits_{T\to+\infty}\frac{1}{T} \left|\int_{t}^{t+T}f(s)d
s\right|=0\nonumber
\end{equation}
uniformly with respect to $t\in\mathbb T$, then
$f(t;\varepsilon):=f(\frac{t}{\varepsilon})$ integrally converges
to $0$ as $\varepsilon \to 0$. If additionally the function $f$ is
bounded on $\mathbb T$, then $f(t;\varepsilon)$ correctly
converges to $0$ as $\varepsilon \to 0$.
\end{lemma}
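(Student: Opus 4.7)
The plan is to reduce both assertions to a single change of variables. Substituting $u=\tau/\varepsilon$ turns the tail integral into
\[
\int_{s}^{t} f(\tau/\varepsilon)\,d\tau \;=\; \varepsilon\bigl(F(t/\varepsilon)-F(s/\varepsilon)\bigr),
\]
where $F(x):=\int_{0}^{x} f(u)\,du$. Writing $x:=t/\varepsilon$, $y:=s/\varepsilon$, the condition $|t-s|\le L$ becomes $|x-y|\le L/\varepsilon$, so the first claim amounts to showing that $\varepsilon\,|F(x)-F(y)|\to 0$ uniformly under this constraint. Then for the second claim, the trivial bound $|f(t/\varepsilon)|\le\sup_{s\in\mathbb T}|f(s)|=:m$ is automatic from the added boundedness hypothesis, so only the first assertion needs real work.

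Fix $\delta>0$ and set $\eta:=\delta/L$. By the uniform hypothesis choose $T_{0}>0$ such that $|F(a+T)-F(a)|\le\eta T$ for every $T\ge T_{0}$ and every $a\in\mathbb T$. I would then split by whether $|x-y|$ is large compared to $T_{0}$. In the \emph{large} regime $|x-y|\ge T_{0}$, direct application of the hypothesis yields
\[
\varepsilon\,|F(x)-F(y)|\le \varepsilon\eta|x-y|=\eta|t-s|\le \eta L=\delta,
\]
which is the easy case.

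The real obstacle is the \emph{small} regime $|x-y|<T_{0}$, where the usable hypothesis no longer controls the integral directly and $f$ is not assumed bounded. Here I would further split depending on whether $x,y$ are bounded away from the endpoint of $\mathbb T$. If $\min(x,y)\ge T_{0}$, I would use the telescoping identity $F(x)-F(y)=[F(x)-F(y-T_{0})]-[F(y)-F(y-T_{0})]$; both brackets are integrals over intervals of length in $[T_{0},2T_{0}]$, so the hypothesis bounds them by $2\eta T_{0}$ and $\eta T_{0}$ respectively, giving $|F(x)-F(y)|\le 3\eta T_{0}$ and hence $\varepsilon|F(x)-F(y)|\le 3\eta\varepsilon T_{0}$, which is $<\delta$ once $\varepsilon$ is small. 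If instead $\min(x,y)<T_{0}$ (relevant only when $\mathbb T=\mathbb R_{+}$), then both $x,y\in[0,2T_{0}]$, so by continuity of $f$ on this fixed compact set one has $|F(x)-F(y)|\le M:=\int_{0}^{2T_{0}}|f(u)|\,du<\infty$, and again $\varepsilon M<\delta$ for small $\varepsilon$.

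Combining the three cases, for $\varepsilon$ sufficiently small one has $\sup_{|t-s|\le L}\bigl|\int_{s}^{t}f(\tau/\varepsilon)\,d\tau\bigr|<\delta$, establishing integral convergence. The \emph{correct} convergence statement follows by appending the bound $|f(t/\varepsilon)|\le m$. I expect the decomposition in the small regime to be the only subtle step; the extension-to-length-$T_{0}$ trick is what makes the argument go through without assuming $f$ is bounded in the first part.
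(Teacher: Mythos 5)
Your proof is correct. Note that the paper itself gives no argument for this lemma: it is quoted from Krasnoselskii--Burd--Kolesov with the remark that the case $\mathbb T=\mathbb R_{+}$ follows ``by absolutely the same arguments,'' so there is nothing in the text to compare against line by line. Your argument is the standard one behind such averaging lemmas --- substitute $u=\tau/\varepsilon$, reduce to controlling $\varepsilon\,|F(x)-F(y)|$ for $|x-y|\le L/\varepsilon$, and split according to whether $|x-y|$ exceeds the threshold $T_{0}$ furnished by the uniform mean-value hypothesis --- and all three cases check out. Two small presentational points: in the telescoping step you implicitly take $x\ge y$ (otherwise the bracket $F(x)-F(y-T_{0})$ is an integral over an interval of length less than $T_{0}$ and the hypothesis does not apply); this is harmless since $|F(x)-F(y)|$ is symmetric, but it should be said. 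The genuinely nontrivial contribution of your write-up is the length-$T_{0}$ extension trick in the small regime, which is exactly what is needed because the first assertion of the lemma does not assume $f$ bounded; in the classical almost-periodic setting of the cited source boundedness is automatic and that regime is disposed of by the trivial estimate $|F(x)-F(y)|\le m\,T_{0}$. Your version is therefore slightly more general than what a routine transcription of the reference would give, and it correctly covers the endpoint complication at $0$ when $\mathbb T=\mathbb R_{+}$.
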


Note that Lemma \ref{lL1_0} is proved in \cite[ChIV]{KBK} for
$\mathbb T=\mathbb R$. If $\mathbb T=\mathbb R_{+}$ this statement
can be proved using absolutely the same arguments as in the case
$\mathbb T=\mathbb R$.

Below we will use the following condition (\textbf{A}):

$f\in C(\mathbb R_{+},\mathfrak B)$ and there exists $\bar{f}\in
\mathfrak B$ such that
\begin{equation}\label{eqAL5}
\lim\limits_{T\to +\infty}\frac{1}{T} \left |\int_{t}^{t+T}
[f(s)-\bar{f}]d s\right |=0
\end{equation}
uniformly with respect to $t\in\mathbb R_{+}$.

Denote by $\Psi$ the family of all decreasing, positive bounded
functions $\psi :\mathbb R_{+}\to\mathbb R_{+}$ with
$\lim\limits_{t\to +\infty}\psi(t)=0$.

\begin{lemma}\label{lC1}\cite{CL_2021} Let $l>0$ and $\psi\in \Psi$, then
\begin{equation*}%\label{eqC1_1}
\lim\limits_{\varepsilon \to 0}\sup\limits_{0\le \tau\le
l}\tau\psi(\frac{\tau}{\varepsilon})=0 .
\end{equation*}
\end{lemma}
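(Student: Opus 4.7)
The plan is to prove this by splitting the supremum over $[0,l]$ into two regimes, a small interval near $\tau=0$ where boundedness of $\psi$ controls the product, and the remaining interval where monotonicity of $\psi$ plus $\psi(t)\to 0$ as $t\to+\infty$ controls it.

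First I would fix an arbitrary $\delta>0$ and let $M:=\sup_{t\ge 0}\psi(t)<+\infty$, which exists because $\psi$ is bounded by assumption. Choose $\eta\in(0,l)$ with $\eta M<\delta/2$. Then for every $\tau\in[0,\eta]$ and every $\varepsilon>0$ we have
\[
\tau\,\psi(\tau/\varepsilon)\le \eta M<\delta/2,
\]
so $\sup_{0\le\tau\le\eta}\tau\psi(\tau/\varepsilon)\le\delta/2$ uniformly in $\varepsilon$.

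Next, on $[\eta,l]$ I would use the fact that $\psi$ is decreasing: for $\tau\ge\eta$ one has $\tau/\varepsilon\ge\eta/\varepsilon$, hence $\psi(\tau/\varepsilon)\le\psi(\eta/\varepsilon)$, and consequently
\[
\sup_{\eta\le\tau\le l}\tau\,\psi(\tau/\varepsilon)\le l\,\psi(\eta/\varepsilon).
\]
Since $\lim_{t\to+\infty}\psi(t)=0$ and $\eta/\varepsilon\to+\infty$ as $\varepsilon\to 0^{+}$, there exists $\varepsilon_{0}>0$ such that $l\,\psi(\eta/\varepsilon)<\delta/2$ for all $0<\varepsilon<\varepsilon_{0}$. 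Combining the two estimates yields $\sup_{0\le\tau\le l}\tau\,\psi(\tau/\varepsilon)<\delta$ for $0<\varepsilon<\varepsilon_{0}$, which gives the claim as $\delta>0$ was arbitrary.

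There is no real obstacle here: the only subtlety is to notice that monotonicity of $\psi$ is essential to pass from pointwise convergence $\psi(\eta/\varepsilon)\to 0$ to a uniform bound on $[\eta,l]$, and that one must split off a neighborhood of $\tau=0$ (where $\tau/\varepsilon$ is not forced to be large) before invoking $\psi(t)\to 0$. Both are handled cleanly by the two-region decomposition above.
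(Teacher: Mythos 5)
Your proof is correct and complete: the split at $\eta$ properly isolates the neighborhood of $\tau=0$ where $\tau/\varepsilon$ need not be large, and monotonicity of $\psi$ turns the pointwise limit $\psi(\eta/\varepsilon)\to 0$ into a uniform bound on $[\eta,l]$. The paper itself gives no proof of this lemma (it is quoted from the cited reference \cite{CL_2021}), but your two-region argument is the standard one and nothing further is needed.
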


\begin{remark}\label{remA1} \rm
By Lemma 2 in \cite{CD_2004} equality (\ref{eqAL5}) holds if and
only if there exists a function $\omega \in\Psi$ satisfying
\begin{equation}\label{AL5.1}
\frac{1}{T}\left |\int_{t}^{t+T} [f(s)-\bar{f}]d s\right |
\le\omega(T)\nonumber
\end{equation}
for any $T>0$ and $t\in\mathbb R_{+}$.
\end{remark}

\begin{lemma}\label{lG_0} Let $\alpha >0$, $\omega\in \Psi$ and $\varepsilon \in
(0,\alpha]$. Then the following statements hold:
\begin{enumerate}
\item
\begin{eqnarray}\label{eqlG0.1}
\lim\limits_{\varepsilon \to 0^{+}}\sup\limits_{t\in\mathbb R_{+}}
e^{-\nu t}t\omega(\frac{t}{\varepsilon})=0;
\end{eqnarray}
\item
\begin{eqnarray}\label{eqlG0.2}
\lim\limits_{\varepsilon \to 0^{+}}\sup\limits_{t\in\mathbb R_{+}}
\int_{0}^{t}e^{-\nu s}s\omega(\frac{s}{\varepsilon})ds =0;
\end{eqnarray}
\item
\begin{eqnarray}\label{eqlG0.3}
\lim\limits_{\varepsilon \to 0^{+}}\int_{0}^{+\infty}e^{-\nu
s}s\omega(\frac{s}{\varepsilon})ds =0 .
\end{eqnarray}
\end{enumerate}
\end{lemma}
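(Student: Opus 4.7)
The plan is to handle (i) by splitting the supremum into a compact piece and a tail piece, and then to derive (ii) and (iii) together from a single dominated-convergence argument once (iii) is available.

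For item (i), let $M:=\sup_{t\ge 0}\omega(t)$, which is finite by the definition of the class $\Psi$. Given $\eta>0$, pick $L>1/\nu$ large enough that $M\,t e^{-\nu t}<\eta$ for all $t\ge L$; this is possible because the function $t\mapsto te^{-\nu t}$ is decreasing on $[1/\nu,+\infty)$ and tends to $0$ at infinity. On the tail one then has the uniform-in-$\varepsilon$ bound
\[
\sup_{t\ge L} e^{-\nu t}\,t\,\omega(t/\varepsilon)\le M\sup_{t\ge L} t e^{-\nu t}<\eta.
\]
On the compact interval $[0,L]$ we drop the factor $e^{-\nu t}\le 1$ and apply Lemma \ref{lC1} with $l=L$ and $\psi=\omega$ to conclude that $\sup_{0\le t\le L} t\,\omega(t/\varepsilon)\to 0$ as $\varepsilon\to 0^{+}$. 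Combining the two regions yields (\ref{eqlG0.1}).

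For item (iii), observe the pointwise domination $e^{-\nu s}s\,\omega(s/\varepsilon)\le M s e^{-\nu s}$, and the majorant $s\mapsto M s e^{-\nu s}$ belongs to $L^{1}(\mathbb R_{+})$. Moreover, for every fixed $s>0$ we have $s/\varepsilon\to+\infty$ as $\varepsilon\to 0^{+}$, and therefore $\omega(s/\varepsilon)\to 0$ because $\omega\in\Psi$. The Lebesgue dominated convergence theorem then delivers (\ref{eqlG0.3}). (Should one prefer a direct argument, one splits $\int_{0}^{+\infty}=\int_{0}^{s_{0}}+\int_{s_{0}}^{+\infty}$ and uses $\omega(s/\varepsilon)\le\omega(s_{0}/\varepsilon)\to 0$ on the second piece together with the smallness of $\int_{0}^{s_{0}} s e^{-\nu s}\,ds$ on the first.)

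For item (ii), the integrand $e^{-\nu s}s\,\omega(s/\varepsilon)$ is nonnegative, so the function $t\mapsto \int_{0}^{t} e^{-\nu s}s\,\omega(s/\varepsilon)\,ds$ is nondecreasing in $t$, and hence
\[
\sup_{t\in\mathbb R_{+}}\int_{0}^{t} e^{-\nu s}s\,\omega(s/\varepsilon)\,ds =\int_{0}^{+\infty} e^{-\nu s}s\,\omega(s/\varepsilon)\,ds,
\]
so (\ref{eqlG0.2}) follows at once from (\ref{eqlG0.3}). The only genuinely delicate step is the uniformity in (i): it relies on separating the role played by the boundedness of $\omega$ (which controls the tail $t\ge L$ uniformly in $\varepsilon$) from the role played by its vanishing at infinity via Lemma \ref{lC1} (which controls the compact part). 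Everything else reduces to monotonicity of $\omega$ and standard estimates.
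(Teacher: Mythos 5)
Your proof is correct, but it follows a genuinely different route from the paper's. For item (i), you split the supremum at a fixed threshold $L$ chosen from the target accuracy $\eta$, controlling the tail $t\ge L$ uniformly in $\varepsilon$ by the boundedness of $\omega$ together with the decay of $te^{-\nu t}$, and the compact part $[0,L]$ by Lemma \ref{lC1}; the paper instead splits at the $\varepsilon$-dependent point $t=\varepsilon^{\mu}$ and uses the monotonicity of $\omega$ to bound $\omega(t/\varepsilon)\le\omega(\varepsilon^{\mu-1})$ on the outer region (for that bound to vanish one needs $\varepsilon^{\mu-1}\to+\infty$, i.e. $\mu\in(0,1)$ rather than the stated $\mu\in(1,+\infty)$; your fixed-$L$ split avoids having to tune such an exponent at all). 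For item (iii) you invoke dominated convergence with the majorant $Mse^{-\nu s}$, whereas the paper again splits at $\varepsilon^{\mu}$ and evaluates the two pieces explicitly via (\ref{eqlG0.10}); both are valid, and your parenthetical direct argument is close in spirit to the paper's computation, only with a fixed splitting point. The most substantial economy is in item (ii): your observation that the integrand is nonnegative, so that $\sup_{t\in\mathbb R_{+}}\int_{0}^{t}=\int_{0}^{+\infty}$ and (\ref{eqlG0.2}) is literally the same assertion as (\ref{eqlG0.3}), collapses two of the paper's three estimates into one, while the paper proves (ii) by a separate uniform-in-$t$ splitting argument before establishing (iii). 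What your approach buys is brevity and robustness (no exponent to choose, reuse of the already available Lemma \ref{lC1}); what the paper's explicit computations buy are quantitative bounds such as (\ref{eqlG0.5}) and (\ref{eqlG0.11}), which could in principle be used to extract a rate of convergence in $\varepsilon$ that a dominated-convergence argument does not provide.
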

\begin{proof}
Let $\mu \in (1,+\infty)$ and $\xi :=\varepsilon^{\mu}$. Note that
\begin{eqnarray}\label{eqlG0.4}
& e^{-\nu t}t\omega(\frac{t}{\varepsilon})\le \sup\limits_{0\le
t\le \varepsilon^{\mu}}e^{-\nu t}t\omega(\frac{t}{\varepsilon})+
\sup\limits_{ t\ge \varepsilon^{\mu}}e^{-\nu
t}t\omega(\frac{t}{\varepsilon})\le \nonumber \\
& \omega(0)\varepsilon^{\mu} +\omega(\varepsilon^{\mu
-1})\max\{1;\frac{1}{\nu e}\}\nonumber
\end{eqnarray}
and, consequently
\begin{eqnarray}\label{eqlG0.5}
\sup\limits_{t\in\mathbb R_{+}} e^{-\nu
t}t\omega(\frac{t}{\varepsilon})\le \omega(0)\varepsilon^{\mu}
+\omega(\varepsilon^{\mu -1})\max\{1;\frac{1}{\nu e}\}
\end{eqnarray}
for any $\varepsilon \in (0,\alpha]$. Passing to the limit in
(\ref{eqlG0.5}) as $\varepsilon \to 0$ we obtain (\ref{eqlG0.1}).

To prove (\ref{eqlG0.2}) we note that
\begin{equation}\label{eqlG0.6}
\int_{0}^{t}e^{-\nu \tau}\tau
\omega(\frac{\tau}{\varepsilon})d\tau =
\int_{0}^{\varepsilon^{\mu}} e^{-\nu \tau}\tau
 \omega(\frac{\tau}{\varepsilon})d\tau +
\int_{\varepsilon^{\mu}}^{t}e^{-\nu \tau}\tau
 \omega(\frac{\tau}{\varepsilon})d\tau .\nonumber
\end{equation}
Taking into consideration that
\begin{eqnarray}\label{eqlG0.7}
&  \int_{0}^{\varepsilon^{\mu}}e^{-\nu
\tau}\tau \omega(\frac{\tau}{\varepsilon})d\tau\le \nonumber \\
& \omega(0) \int_{0}^{\varepsilon^{\mu}}\tau e^{-\nu \tau}d\tau =
\frac{\omega(0)}{\nu}[\frac{1}{\nu} - e^{-\nu
\varepsilon^{\mu}}(\varepsilon^{\mu}+\frac{1}{\nu}) ]\to 0
\end{eqnarray}
and
\begin{eqnarray}\label{eqlG0.8}
& \int_{\varepsilon^{\mu}}^{t}e^{-\nu \tau}\tau
\omega(\frac{\tau}{\varepsilon})d\tau\le
\omega(\frac{\varepsilon^{\mu}}{\varepsilon})\int_{\varepsilon^{\mu}}^{t}\tau
e^{-\nu
s}ds =\nonumber \\
& \omega(\varepsilon^{\mu -1})\Big{(}-\frac{e^{-\nu
t}}{\nu}(t+\frac{1}{\nu})+\frac{e^{-\nu
\varepsilon^{\mu}}}{\nu}(\varepsilon^{\mu}+\frac{1}{\nu})\Big{)}\le
\nonumber \\
& \omega(\varepsilon^{\mu -1})\frac{e^{-\nu
\varepsilon^{\mu}}}{\nu}(\varepsilon^{\mu}+\frac{1}{\nu})\to 0
\end{eqnarray}
as $\varepsilon \to 0$ uniformly with respect to $t\in \mathbb
R_{+}$ from (\ref{eqlG0.6})-(\ref{eqlG0.8}) we obtain
(\ref{eqlG0.2}).

Finally, we will estimate the integral $\int_{0}^{+\infty}e^{-\nu
s}\omega(\frac{s}{\varepsilon})ds$. To this end we note that
\begin{eqnarray}\label{eqlG0.9}
& \int_{0}^{+\infty}e^{-\nu s}s\omega(\frac{s}{\varepsilon})ds=
\int_{0}^{\varepsilon^{\mu}}e^{-\nu
s}s\omega(\frac{s}{\varepsilon})ds + \nonumber \\
& \int_{\varepsilon^{\mu}}^{+\infty}e^{-\nu
s}s\omega(\frac{s}{\varepsilon})ds \le \nonumber \\
& \omega(0)\int_{0}^{\varepsilon^{\mu}}e^{-\nu s}sds +
\omega(\varepsilon^{\mu
-1})\int_{\varepsilon^{\mu}}^{+\infty}e^{-\nu s}sds .
\end{eqnarray}
Since
\begin{equation}\label{eqlG0.10}
\int e^{-\nu s}sds=-\frac{1}{\nu}e^{-\nu s}(s+\frac{1}{\nu}),
\end{equation}
then from (\ref{eqlG0.9})-(\ref{eqlG0.10}) we have
\begin{eqnarray}\label{eqlG0.11}
& \int_{0}^{+\infty}e^{-\nu s}s\omega(\frac{s}{\varepsilon}) ds
\le
\\
& \omega(0)[-\frac{1}{\nu}e^{-\nu
\varepsilon^{\mu}}(\varepsilon^{\mu}+\frac{1}{\nu})+\frac{1}{\nu^{2}}]+\omega(\varepsilon^{\mu
-1})\frac{1}{\nu}e^{\nu
\varepsilon^{\mu}}(\varepsilon^{\mu}+\frac{1}{\nu}) .\nonumber
\end{eqnarray}
Passing to the limit in (\ref{eqlG0.11}) as $\varepsilon \to 0$ we
obtain (\ref{eqlG0.3}). Lemma is completely proved.
\end{proof}

\begin{lemma}\label{lG1}
Assume that the following conditions hold:
\begin{enumerate}
\item $f\in C_{b}(\mathbb R_{+},\mathfrak B)$; \item there exists
an element $\bar{f}\in \mathfrak B$ such that
\begin{equation}\label{eqA1}
\lim\limits_{T\to
+\infty}\frac{1}{T}\int_{t}^{t+T}[f(s)-\bar{f}]ds=0\nonumber
\end{equation}
uniformly with respect to $t\in\mathbb R_{+}$; \item $-\mathcal A$
is a sectorial hyperbolic operator.
\end{enumerate}

Then
\begin{equation}\label{eqA2}
\lim\limits_{\varepsilon \to 0^{+}}\|\varphi_{\varepsilon}\|=0
,\nonumber
\end{equation}
where
\begin{equation}\label{eqA3}
\varphi_{\varepsilon}(t)=\int_{0}^{+\infty}G(t-\tau)\hat{f}_{\varepsilon}(\tau)d\tau
\nonumber
\end{equation}
and $\hat{f}_{\varepsilon}(t);=f(\frac{t}{\varepsilon})-\bar{f}$
for any $t\in\mathbb R_{+}$ and $\varepsilon \in (0,\alpha]$.
\end{lemma}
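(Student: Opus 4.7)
The plan is to exploit the smallness of running averages of $f-\bar f$ (Remark \ref{remA1}) by integrating by parts against the Green's function, so that $\hat f_\varepsilon$ is replaced by its antiderivative $F_\varepsilon(\tau):=\int_0^\tau \hat f_\varepsilon(s)\,ds$. After the change of variable $u=s/\varepsilon$, Remark \ref{remA1} yields the key preliminary estimate
\[
|F_\varepsilon(\tau)-F_\varepsilon(t)|=\varepsilon\Bigl|\int_{t/\varepsilon}^{\tau/\varepsilon}[f(u)-\bar f]\,du\Bigr|\le |\tau-t|\,\omega(|\tau-t|/\varepsilon)
\]
for all $t,\tau\ge 0$ and some $\omega\in\Psi$; in particular $|F_\varepsilon(t)|\le t\,\omega(t/\varepsilon)$.

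Next I split $\varphi_\varepsilon=P_-\varphi_\varepsilon+P_+\varphi_\varepsilon$. By Lemma \ref{lC_01},
\[
P_-\varphi_\varepsilon(t)=\int_0^t e^{A_-(t-\tau)}P_-\hat f_\varepsilon(\tau)\,d\tau,\qquad P_+\varphi_\varepsilon(t)=-\int_0^{+\infty} e^{-A_+s}P_+\hat f_\varepsilon(t+s)\,ds,
\]
where by Theorem \ref{thSO1}(iii)--(vi), $A_-$ is bounded on $\mathfrak B_-$ while $-A_+$ generates an analytic semigroup with $\|e^{-A_+s}\|\le\mathcal N e^{-\nu s}$ and the smoothing estimate $\|A_+e^{-A_+s}\|\le\mathcal N s^{-1}e^{-\nu s}$ for $s>0$.

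For the stable part, I integrate by parts using the antiderivative $F_\varepsilon(\tau)-F_\varepsilon(t)$, which vanishes at $\tau=t$, obtaining
\[
P_-\varphi_\varepsilon(t)=e^{A_-t}P_-F_\varepsilon(t)+\int_0^t A_-e^{A_-(t-\tau)}P_-(F_\varepsilon(\tau)-F_\varepsilon(t))\,d\tau.
\]
The boundary term is dominated by $\mathcal N\|P_-\|\,e^{-\nu t}t\,\omega(t/\varepsilon)$, uniformly small in $t$ by Lemma \ref{lG_0}(i). After the substitution $s:=t-\tau$, the remaining integral is bounded by $\|A_-\|\mathcal N\|P_-\|\int_0^t e^{-\nu s}s\,\omega(s/\varepsilon)\,ds$, which is uniformly small by Lemma \ref{lG_0}(iii).

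For the unstable part, set $\tilde F_\varepsilon(s):=\int_0^s\hat f_\varepsilon(t+u)\,du$, so that both $|\tilde F_\varepsilon(s)|\le s\,\omega(s/\varepsilon)$ and $|\tilde F_\varepsilon(s)|\le 2\|f\|\,s$. Integrating by parts on $(0,R)$ and letting $R\to+\infty$ (the boundary terms vanish because $\tilde F_\varepsilon(0)=0$ and $\|e^{-A_+R}\tilde F_\varepsilon(R)\|\le 2\mathcal N\|f\|Re^{-\nu R}\to 0$), I arrive at
\[
P_+\varphi_\varepsilon(t)=-\int_0^{+\infty}A_+e^{-A_+s}P_+\tilde F_\varepsilon(s)\,ds.
\]
The principal difficulty is the singular factor $s^{-1}$ in $\|A_+e^{-A_+s}\|$; I handle it by splitting at $s=\sqrt\varepsilon$, using $|\tilde F_\varepsilon(s)|\le 2\|f\|s$ on $(0,\sqrt\varepsilon)$ to cancel the singularity (producing a contribution of order $\sqrt\varepsilon$) and $|\tilde F_\varepsilon(s)|\le s\,\omega(s/\varepsilon)$ on $(\sqrt\varepsilon,+\infty)$ (producing a contribution bounded by $\mathcal N\|P_+\|\omega(\varepsilon^{-1/2})/\nu$). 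Both tend to $0$ as $\varepsilon\to 0^+$, uniformly in $t$, and combining with the stable estimate yields $\|\varphi_\varepsilon\|\to 0$. The real obstacle is precisely the unboundedness of $A_+$, which is neutralized by the linear vanishing of $\tilde F_\varepsilon$ at the origin.
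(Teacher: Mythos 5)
Your proposal is correct and follows essentially the same route as the paper: the same splitting of $\varphi_\varepsilon$ via the spectral projections $P_\pm$, the same integration by parts against the antiderivative of $\hat f_\varepsilon$ controlled by $s\,\omega(s/\varepsilon)$ (Remark \ref{remA1}), the same appeal to Lemma \ref{lG_0} for the stable part, and the same cancellation of the $s^{-1}$ singularity of $\|A_+e^{-A_+s}\|$ by the linear vanishing of the antiderivative at $s=0$. Your explicit split at $s=\sqrt{\varepsilon}$ for the unstable part is just a concrete instance of the $\varepsilon^{\mu}$-splitting the paper uses inside Lemma \ref{lG_0}, so the difference is cosmetic.
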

\begin{proof} We will show that
\begin{equation}\label{eqA4}
\lim\limits_{\varepsilon \to
0}\|\varphi_{\varepsilon}\|=\lim\limits_{\varepsilon \to
0}\sup\limits_{t\in\mathbb
R_{+}}|\int_{0}^{+\infty}G(t-\tau)\hat{f}_{\varepsilon}(\tau)d\tau|=0
.\nonumber
\end{equation}
To this end we note that
\begin{eqnarray}\label{eqA5}
& |\int_{t}^{t+s}\hat{f}_{\varepsilon}(\tau)d\tau | = \varepsilon
|\int_{t/\varepsilon}^{t/\varepsilon+s/\varepsilon}[f(\tau)-\bar{f}]d\tau
| \le \nonumber \\
& s\omega(\frac{s}{\varepsilon})\le \sup\limits_{0\le s\le
l}s\omega(\frac{s}{\varepsilon})\nonumber
\end{eqnarray}
and, consequently,
\begin{equation}\label{eqA6}
\sup\limits_{0\le s\le l}\sup\limits_{t\in\mathbb
R_{+}}|\int_{t}^{t+s}\hat{f}_{\varepsilon}(\tau)d\tau \le
\sup\limits_{0\le s\le l}s\omega(\frac{s}{\varepsilon}) \to
0\nonumber
\end{equation}
as $\varepsilon \to 0^{+}$ for any $l>0$, because by Lemma
\ref{lC1} $\sup\limits_{0\le s\le l}s\omega(\frac{s}{\varepsilon})
\to 0$ as $\varepsilon \to 0^{+}$.

By Lemma \ref{lC02} we have
\begin{equation}\label{eqA7}
|\varphi_{\varepsilon}(t)|=
|\int_{0}^{+\infty}G(t-\tau)\hat{f}_{\varepsilon}(\tau)d\tau|=
\end{equation}
$$
|\int_{0}^{t}e^{A_{-}(t-\tau)}P_{-}\hat{f}_{\varepsilon}(\tau)d\tau
-
\int_{t}^{+\infty}e^{A_{+}(t-\tau)}P_{+}\hat{f}_{\varepsilon}(\tau)d\tau|=
$$
$$
|-\int_{-t}^{0}e^{-A_{-}s}P_{-}\hat{f}_{\varepsilon}(t+s)ds -
\int_{0}^{+\infty}e^{-A_{+}s}P_{+}\hat{f}_{\varepsilon}(t+s)ds|\le
$$
$$
|\int_{-t}^{0}e^{-A_{-}s}P_{-}\hat{f}_{\varepsilon}(t+s)ds|+
|\int_{0}^{+\infty}e^{-A_{+}s}P_{+}\hat{f}_{\varepsilon}(t+s)ds|=I_{1}(t,\varepsilon)+I_{2}(t,\varepsilon),
$$
where
\begin{equation}\label{eq8}
I_{1}(t,\varepsilon)=|\int_{-t}^{0}e^{-A_{-}s}P_{-}\hat{f}_{\varepsilon}(t+s)ds|\nonumber
\end{equation}
and
\begin{equation}\label{eq9}
I_{2}(t,\varepsilon)=|\int_{0}^{+\infty}e^{-A_{+}s}P_{+}\hat{f}_{\varepsilon}(t+s)ds|.\nonumber
\end{equation}
Integrating by parts in s we find
\begin{eqnarray}\label{eqHI4}
& \int_{-t}^{0}e^{-A_{-}s}P_{-}\hat{f}_{\varepsilon}(t+s)ds =
\nonumber
\\
&
e^{A_{-}t}P_{-}\int_{0}^{t}\hat{f}_{\varepsilon}(s)ds+\int_{-t}^{0}A_{-}e^{-A_{-}s}P_{-}\int_{t}^{t+s}\hat{f}_{\varepsilon}(\sigma)d\sigma
ds .\nonumber
\end{eqnarray}
Notice that
\begin{equation}\label{eqH14.1}
|\int_{t}^{t+s}\hat{f}_{\varepsilon}(\sigma)d\sigma|\le As
\end{equation}
for any $t,s\in \mathbb R_{+}$, where $A:= 2\|f\|$.

\begin{eqnarray}\label{eqHI5}
&
I_{11}(t,\varepsilon):=|e^{A_{-}t}P_{-}\int_{0}^{t}\hat{f}_{\varepsilon}(s)ds|\le
 \|e^{A_{-}t}P_{-}\||\int_{0}^{t}\hat{f}_{\varepsilon}(\sigma)d\sigma|\le
 \mathcal N e^{-\nu t}t\omega(\frac{t}{\varepsilon}),
\end{eqnarray}
\begin{eqnarray}\label{eqHI6}
&
I_{12}(t,\varepsilon):=|\int_{-t}^{0}A_{-}e^{-A_{-}s}P_{-}\int_{t}^{t+s}\hat{f}_{\varepsilon}(\sigma)d\sigma
ds|\le \nonumber \\ &
\int_{-t}^{0}\|A_{-}e^{-A_{-}s}P_{-}\||\int_{t}^{t+s}\hat{f}_{\varepsilon}(\sigma)d\sigma|\le
\int_{-t}^{0}\|A_{-}e^{-A_{-}s}P_{-}\||s|\omega(\frac{|s|}{\varepsilon})ds
= \nonumber
\\ & \int_{0}^{t}\|A_{-}e^{A_{-}s}P_{-}\|s\omega(\frac{s}{\varepsilon})ds
\le \int_{0}^{t}\mathcal N \|A\|e^{-\nu
s}s\omega(\frac{s}{\varepsilon})ds=\hat{\mathcal N}
\int_{0}^{t}e^{-\nu s}s\omega(\frac{s}{\varepsilon})ds,
\end{eqnarray}
where $\hat{\mathcal N}:=\mathcal N \|A\|$. From (\ref{eqHI5}) and
(\ref{eqHI6}) we obtain
\begin{equation}\label{eqHI6.1}
I_{1}(t,\varepsilon)\le I_{11}(t,\varepsilon) +
I_{12}(t,\varepsilon)\le \mathcal N e^{-\nu
t}t\omega(\frac{t}{\varepsilon}) + \hat{\mathcal N}
\int_{0}^{t}e^{-\nu s}s\omega(\frac{s}{\varepsilon})ds \nonumber
\end{equation}
and, consequently,
\begin{equation}\label{eqH16.2}
\sup\limits_{t\in\mathbb R_{+}}I_{1}(t,\varepsilon)\le \mathcal N
\sup\limits_{t\in\mathbb R_{+}}e^{-\nu
t}t\omega(\frac{t}{\varepsilon}) + \hat{\mathcal N}
\sup\limits_{t\in\mathbb R_{+}}\int_{0}^{t}e^{-\nu
s}s\omega(\frac{s}{\varepsilon})ds .
\end{equation}
According to Lemma \ref{lG_0} from (\ref{eqH16.2}) we get
\begin{equation}\label{eqH16.3}
\lim\limits_{\varepsilon \to 0^{+}}\sup\limits_{t\in\mathbb
R_{+}}I_{1}(t,\varepsilon)=0 .
\end{equation}

Now we will estimate the integral
\begin{equation}\label{eqHI12.1}
I_{2}(t,\varepsilon):=|-\int_{t}^{+\infty}e^{A_{+}(t-\tau)}P_{+}\hat{f}_{\varepsilon}(\tau)d\tau
|=|\int_{0}^{+\infty}e^{-A_{+}s}P_{+}\hat{f}_{\varepsilon}(t+s)ds|.\nonumber
\end{equation}

Integrating by parts in $s$ we have
\begin{eqnarray}\label{eqHI13}
& \int_{0}^{+\infty}e^{-A_{+}s}P_{+}\hat{f}_{\varepsilon}(t+s)ds
=\nonumber
\\
&
e^{-A_{+}s}P_{+}\int_{t}^{t+s}\hat{f}_{\varepsilon}(\sigma)d\sigma\Big{|}_{0}^{+\infty}+\int_{0}^{+\infty}A_{+}e^{-A_{+}s}P_{+}\int_{t}^{t+s}
\hat{f}_{\varepsilon}(\sigma)d\sigma ds .\nonumber
\end{eqnarray}

Taking into consideration (\ref{eqH14.1})
\begin{equation}\label{eqH13.1}
|\int_{t}^{t+s}\hat{f}_{\varepsilon}(\sigma)d\sigma|\le A s
\nonumber
\end{equation}
we obtain
\begin{equation}\label{qHI14}
|\int_{0}^{+\infty}e^{-A_{+}s}P_{+}\hat{f}_{\varepsilon}(t+s)ds|\le
A\mathcal N e^{-\nu s}s \nonumber
\end{equation}
for any $t,s\in \mathbb R_{+}$ and, consequently,
\begin{equation}\label{eqHI15}
\lim\limits_{s\to +\infty}\sup\limits_{t\ge
0}|\int_{0}^{+\infty}e^{-A_{+}s}P_{+}\hat{f}_{\varepsilon}(t+s)ds|=0
.\nonumber
\end{equation}
Thus we have
\begin{equation}\label{eqHI16}
I_{2}(t,\varepsilon)=|\int_{0}^{+\infty}A_{+}e^{-A_{+}s}P_{+}\int_{t}^{t+s}\hat{f}_{\varepsilon}(\sigma)d\sigma
ds| .\nonumber
\end{equation}
Since
\begin{eqnarray}\label{eqHI17}
&
|\int_{0}^{+\infty}A_{+}e^{-A_{+}s}P_{+}\int_{t}^{t+s}\hat{f}_{\varepsilon}(\sigma)d\sigma
ds|\le \nonumber \\
&
\int_{0}^{+\infty}\|A_{+}e^{-A_{+}s}P_{+}\||\int_{t}^{t+s}\hat{f}_{\varepsilon}(\sigma)d\sigma|\le
\\ & \int_{0}^{+\infty}\mathcal N s^{-1}e^{-\nu
s}s\omega(\frac{s}{\varepsilon})ds =  \mathcal
N\int_{0}^{+\infty}e^{-\nu s}\omega(\frac{s}{\varepsilon})ds
\nonumber
\end{eqnarray}
for any $t\ge 0$, then
\begin{eqnarray}\label{eqHI18.1}
& I_{2}(t,\varepsilon)=\sup\limits_{t\ge
0}|\int_{0}^{+\infty}A_{+}e^{-A_{+}s}P_{+}\int_{t}^{t+s}\hat{f}_{\varepsilon}(\sigma)d\sigma
ds|\le \nonumber \\
& \mathcal N \int_{0}^{+\infty}e^{-\nu
s}\omega(\frac{s}{\varepsilon}ds\to 0 \ \ \mbox{as}\ \varepsilon
\to 0
\end{eqnarray}
uniformly with respect to $t\in \mathbb R_{+}$.

From (\ref{eqA7}), (\ref{eqH16.3}) and (\ref{eqHI18.1}) we have
\begin{equation}\label{eqGS17}
\lim\limits_{\varepsilon \to 0^{+}}\|\varphi_{\varepsilon}\|=0 .
\end{equation}
Lemma is completely proved.
\end{proof}

\begin{theorem}\label{thLG1} Assume that the following conditions
are fulfilled:
\begin{enumerate}
\item $-\mathcal A$ is a sectorial hyperbolic operator;
\item
$f\in C_{b}(\mathbb R_{+},\mathfrak B)$; \item there exists an
element $\bar{f}\in \mathfrak B$ such that
\begin{equation}\label{eqG1.1}
\bar{f}=\lim\limits_{T\to +\infty}\frac{1}{T}\int_{t}^{t+T}f(s)ds
\nonumber
\end{equation}
uniformly with respect to $t\in\mathbb R_{+}$.
\end{enumerate}

Then
\begin{enumerate}
\item[1.] for any $\varepsilon \in (0,\varepsilon_0]$ there exists
a unique solution $\varphi_{\varepsilon}\in C_{b}(\mathbb
R_{+},\mathfrak B)$ of equation (\ref{eqALN1}) compatible in the
limit with $\mathcal P\varphi (0)=0$; \item[2.] for any
$\varepsilon \in (0,\varepsilon_0]$ there exists a unique solution
$\psi_{\varepsilon}\in C_{b}(\mathbb R_{+},\mathfrak B)$ of
equation
\begin{equation}\label{eqLNA1}
x'(\tau)=\mathcal A x(\tau)+f_{\varepsilon}(\tau)),\ \
(f_{\varepsilon}(\tau):=f(\frac{\tau}{\varepsilon}),\ \tau\in
\mathbb R_{+})
\end{equation}
with $\mathcal P\psi_{\varepsilon}(0)=0$; \item[3.]
\begin{equation}\label{eqG1.2}
\lim\limits_{\varepsilon \to
0^{+}}\|\psi_{\varepsilon}-\bar{\psi}\|=0,\nonumber
\end{equation}
where $\bar{\psi}$ is a unique stationary solution of equation
\begin{equation}\label{eqLNA2}
y'(\tau)=\mathcal A y(\tau) +\bar{f} ;
\end{equation}
\item[4.]
\begin{equation}\label{eqG1.2.1}
\lim\limits_{\varepsilon \to 0^{+}}\sup\limits_{t\in\mathbb
R_{+}}|\varphi_{\varepsilon}(\frac{t}{\varepsilon})-\bar{\psi}|=0
.
\end{equation}
\end{enumerate}
\end{theorem}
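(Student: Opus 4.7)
I would reduce the four assertions to two workhorses already in hand: Lemma \ref{lC02} (Green's function representation of bounded solutions on $\mathbb{R}_+$) and Lemma \ref{lG1} (the averaging estimate for the oscillatory part).

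\textbf{Items (1) and (2).} Since $\sigma(\varepsilon\mathcal{A})=\varepsilon\,\sigma(\mathcal{A})$ still avoids $i\mathbb{R}$ and the sector bounds in Definition \ref{defS2} rescale with $\varepsilon>0$, the operator $-\varepsilon\mathcal{A}$ is again sectorial hyperbolic, and consequently $\{e^{\varepsilon\mathcal{A}t}\}_{t\ge 0}$ is a hyperbolic semigroup by Lemma \ref{lC_01}. Applying Lemma \ref{lC02} with this generator and the forcing $\varepsilon f(\cdot)\in C_b(\mathbb{R}_+,\mathfrak{B})$ produces the required unique $\varphi_\varepsilon\in C_b(\mathbb{R}_+,\mathfrak{B})$ with $\mathcal{P}\varphi_\varepsilon(0)=0$; compatibility in the limit comes from Theorem \ref{thLS11_2}(iv). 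For (2), the function $f_\varepsilon(t):=f(t/\varepsilon)$ inherits boundedness from $f$, so Lemma \ref{lC02} applies directly to the generator $\mathcal{A}$ and yields $\psi_\varepsilon$.

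\textbf{(4) reduces to (3).} Set $y(s):=\varphi_\varepsilon(s/\varepsilon)$ for $s\in\mathbb{R}_+$. By (1), $y'(s)=\mathcal{A}y(s)+f(s/\varepsilon)=\mathcal{A}y(s)+f_\varepsilon(s)$ and $\mathcal{P}y(0)=\mathcal{P}\varphi_\varepsilon(0)=0$; since $y$ is bounded, the uniqueness in (2) forces $y\equiv\psi_\varepsilon$. Therefore $\sup_{t\in\mathbb{R}_+}|\varphi_\varepsilon(t/\varepsilon)-\bar\psi|=\|\psi_\varepsilon-\bar\psi\|$, so (4) follows from (3).

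\textbf{Item (3).} Decompose $f_\varepsilon=\bar f+\hat f_\varepsilon$ with $\hat f_\varepsilon(t):=f(t/\varepsilon)-\bar f$ and split $\psi_\varepsilon=\psi_\varepsilon^{(1)}+\psi_\varepsilon^{(2)}$ by linearity, where
\begin{equation*}
\psi_\varepsilon^{(1)}(t):=\int_0^{+\infty}G(t-\tau)\hat f_\varepsilon(\tau)\,d\tau,\qquad \psi_\varepsilon^{(2)}(t):=\int_0^{+\infty}G(t-\tau)\bar f\,d\tau.
\end{equation*}
The first summand is exactly the function of Lemma \ref{lG1}, so $\|\psi_\varepsilon^{(1)}\|\to 0$ as $\varepsilon\to 0^+$. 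The second summand is independent of $\varepsilon$; using $\mathcal{A}\bar\psi=-\bar f$ and the explicit splitting $G=(e^{A_-\cdot}P_-,-e^{-A_+\cdot}P_+)$ of Lemma \ref{lC_01}, it is identified with the stationary solution $\bar\psi$. Combining the two contributions yields $\|\psi_\varepsilon-\bar\psi\|\to 0$.

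\textbf{Main obstacle.} The substantive content is already packaged in Lemma \ref{lG1}: its integration-by-parts argument converts the Ces\`aro-type mean bound $\bigl|\int_t^{t+T}(f(s)-\bar f)\,ds\bigr|\le T\,\omega(T)$ into uniform pointwise decay on $\mathbb{R}_+$, with the sectorial estimate $\|A_+e^{-A_+s}P_+\|\le \mathcal{N}s^{-1}e^{-\nu s}$ from Theorem \ref{thSO1}(vi) absorbing the unstable part of the integral and Lemma \ref{lG_0} absorbing the stable part. Once Lemma \ref{lG1} is in place, the theorem assembles from linearity, time rescaling, and uniqueness.
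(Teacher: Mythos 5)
Your route is the same as the paper's: items (1) and (2) come from Theorem \ref{thLS11_2} (equivalently Lemma \ref{lC02}), item (4) is the rescaling identity $\psi_{\varepsilon}(\tau)=\varphi_{\varepsilon}(\tau/\varepsilon)$ combined with the uniqueness in (2), and item (3) is reduced to Lemma \ref{lG1} by removing the mean $\bar f$. Your preliminary observations --- that $-\varepsilon\mathcal A$ is again sectorial hyperbolic for each fixed $\varepsilon>0$ (with $\varepsilon$-dependent constants) and that the spectral projection $\mathcal P=P_{-}$ is unchanged under the positive rescaling of the spectrum --- are correct and are left implicit in the paper.

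The one step that does not go through as written is the identification, in your item (3), of $\psi_{\varepsilon}^{(2)}(t)=\int_{0}^{+\infty}G(t-\tau)\bar f\,d\tau$ with the stationary solution $\bar\psi=-\mathcal A^{-1}\bar f$. Using the splitting of Lemma \ref{lC_01} one computes
\begin{equation*}
\int_{0}^{+\infty}G(t-\tau)\bar f\,d\tau=\int_{0}^{t}e^{A_{-}s}P_{-}\bar f\,ds-\int_{-\infty}^{0}e^{A_{+}s}P_{+}\bar f\,ds=\bar\psi+e^{A_{-}t}A_{-}^{-1}P_{-}\bar f ,
\end{equation*}
because $\mathbb G(\bar f)$ is by construction the bounded solution of (\ref{eqLNA2}) with $P_{-}\mathbb G(\bar f)(0)=0$, whereas $P_{-}\bar\psi=-A_{-}^{-1}P_{-}\bar f$ need not vanish. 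The residual term $e^{A_{-}t}A_{-}^{-1}P_{-}\bar f$ decays in $t$, but its supremum over $t\in\mathbb R_{+}$ equals $|A_{-}^{-1}P_{-}\bar f|$ and is independent of $\varepsilon$; hence $\|\psi_{\varepsilon}-\bar\psi\|\to 0$ in the $C_{b}(\mathbb R_{+},\mathfrak B)$ norm can only hold when $P_{-}\bar f=0$ (for instance when $\sigma_{-}(\mathcal A)=\emptyset$, as in the paper's heat-equation example, where $\bar f=0$). To be fair, the paper's own proof contains the same gap in a different guise: it asserts $\mathcal P(\psi_{\varepsilon}(0)-\bar\psi)=0$, which is exactly the claim $P_{-}\bar\psi=0$, before invoking the Green's-operator representation of $\psi_{\varepsilon}-\bar\psi$. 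So your proposal faithfully reproduces the paper's argument, including its weak point; to make items (3)--(4) correct without extra hypotheses one should either assume $P_{-}\bar f=0$, or replace $\bar\psi$ there by $\mathbb G(\bar f)(t)=\bar\psi+e^{A_{-}t}A_{-}^{-1}P_{-}\bar f$, the solution of (\ref{eqLNA2}) normalized by $P_{-}$-initial value zero, or weaken the convergence to uniformity on $[T,+\infty)$ after letting $T\to+\infty$.
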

\begin{proof} By Theorem \ref{thLS11_2} equation (\ref{eqALN1})
(respectively, equation (\ref{eqLNA1}) has a unique solution
$\varphi_{\varepsilon}\in C_{b}(\mathbb R_{+},\mathfrak B)$
(respectively, $\psi_{\varepsilon}\in C_{b}(\mathbb
R_{+},\mathfrak B)$) compatible in the limit with $\mathcal P
\varphi_{\varepsilon}(0)=0$ (respectively, $\mathcal
P\psi_{\varepsilon}(0)=0$).

Under the conditions of Theorem \ref{thLG1} equation
(\ref{eqLNA2}) has a unique stationary solution
$\bar{\psi}=-(\mathcal A)^{-1}\bar{f}$. Let us make a change of
the variable $x=\bar{\psi}+y$ in equation (\ref{eqLNA2}), then we
obtain
\begin{equation}\label{eqLNA3}
y'(\tau)=\mathcal A y(\tau)+ \hat{f}_{\varepsilon}(\tau),
\end{equation}
where $\hat{f}_{\varepsilon}(\tau):=f_{\varepsilon}(\tau)-\bar{f}$
for any $\tau\in \mathbb R_{+}$. Note that
$\psi_{\varepsilon}-\bar{\psi}\in C_{b}(\mathbb R_{+},\mathfrak
B)$ is a solution of equation (\ref{eqLNA3}) and $\mathcal
P(\psi_{\varepsilon}(0)-\bar{\psi})=0$. By Theorem \ref{thLS11_2}
we have
\begin{equation}\label{eqLNA4}
\psi_{\varepsilon}(\tau)-\bar{\psi}=\int_{0}^{+\infty}G(\tau
-s)\hat{f}_{\varepsilon}(s)ds .\nonumber
\end{equation}
According to Lemma \ref{lG1} we get
\begin{equation}\label{eqLN4.1}
\lim\limits_{\varepsilon\to
0^{+}}\|\psi_{\varepsilon}-\bar{\psi}\|=0 .\nonumber
\end{equation}
Since
$\psi_{\varepsilon}(\tau)=\varphi_{\varepsilon}(\frac{\tau}{\varepsilon})$
for any $\tau \in \mathbb R_{+}$ and $\varepsilon \in (0,\alpha]$,
we get (\ref{eqG1.2.1}). Theorem is proved.
\end{proof}

\begin{coro}\label{cor L10}
Under the conditions of \textsl{Theorem} \ref{thLG1} the following
statements hold:
\begin{enumerate}
\item if the function $f\in$ $ C_{b}(\mathbb R_{+},$ $\mathfrak
B)$ is asymptotically stationary (respectively, asymptotically
$\tau$-periodic, asymptotically quasi-periodic with the spectrum
of frequencies $\nu_1,\ldots,\nu_k$, asymptotically Bohr almost
periodic, asymptotically Bohr almost automorphic, asymptotically
Birkhoff recurrent, positively Lagrange stable), then equation
(\ref{eqLNA1}) has a unique solution $\varphi_{\varepsilon} \in
C_{b}(\mathbb R_{+},\mathfrak B)$ with $\mathcal P
\varphi_{\varepsilon}(0)=0$ which is asymptotically stationary
(respectively, asymptotically $\tau$-periodic, asymptotically
quasi-periodic with the spectrum of frequencies
$\nu_1,\ldots,\nu_k$, asymptotically Bohr almost periodic,
asymptotically Bohr almost automorphic, asymptotically Birkhoff
recurrent, positively Lagrange stable); \item
\begin{equation}\label{eqG1.2.01}
\lim\limits_{\varepsilon \to 0^{+}}\sup\limits_{t\in\mathbb
R_{+}}|\varphi_{\varepsilon}(\frac{t}{\varepsilon})-\bar{\psi}|=0
,\nonumber
\end{equation}
where $\bar{\psi}$ is a unique stationary solution of equation
(\ref{eqLNA2}).
\end{enumerate}
\end{coro}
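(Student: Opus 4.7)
\textbf{Proof plan for Corollary \ref{cor L10}.}

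My plan is to derive both statements directly from the two preceding results, namely Corollary \ref{cor 10} applied to equation (\ref{eqLNA1}) and Theorem \ref{thLG1}, with a small observation about how rescaling in time preserves asymptotic recurrence.

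First I would handle statement (i). Equation (\ref{eqLNA1}) is a linear nonhomogeneous equation driven by the rescaled forcing $f_{\varepsilon}(\tau)=f(\tau/\varepsilon)$. The key observation is that for each $\varepsilon\in(0,\varepsilon_{0}]$, if $f$ is asymptotically stationary (respectively, asymptotically $\tau$-periodic with period $\tau$ rescaled to $\varepsilon\tau$, asymptotically quasi-periodic with frequencies rescaled by $1/\varepsilon$, asymptotically Bohr almost periodic, asymptotically Bohr almost automorphic, asymptotically Birkhoff recurrent, positively Lagrange stable), then so is $f_{\varepsilon}$. Indeed, if $p\in C(\mathbb R_{+},\mathfrak B)$ is the corresponding limit function with $\lim_{t\to+\infty}|f(t)-p(t)|=0$, then $p_{\varepsilon}(\tau):=p(\tau/\varepsilon)$ is of the same recurrence type (a rescaling of time takes a stationary function to itself, a $\tau$-periodic function to an $\varepsilon\tau$-periodic one, and preserves Bohr/Levitan/Birkhoff/quasi-periodic character since the defining almost-period and hull-compactness properties transfer under the homeomorphism $t\mapsto\varepsilon t$), and $\lim_{\tau\to+\infty}|f_{\varepsilon}(\tau)-p_{\varepsilon}(\tau)|=0$. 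I would make this inheritance explicit in one short lemma-style paragraph.

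Once $f_{\varepsilon}$ is known to belong to the appropriate asymptotic class, Corollary \ref{cor 10}, applied to equation (\ref{eqLNA1}) under the hyperbolicity assumption on $\{U(t)\}_{t\ge 0}$ (which holds by Lemma \ref{lC_01} since $-\mathcal A$ is sectorial hyperbolic), yields a unique $\varphi_{\varepsilon}\in C_{b}(\mathbb R_{+},\mathfrak B)$ with $\mathcal P\varphi_{\varepsilon}(0)=0$ possessing the same asymptotic recurrence property as $f_{\varepsilon}$, and hence as $f$. This is statement (i).

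For statement (ii), no further work is needed: it is precisely item (4) of Theorem \ref{thLG1}, which asserts
\begin{equation*}
\lim_{\varepsilon\to 0^{+}}\sup_{t\in\mathbb R_{+}}\bigl|\varphi_{\varepsilon}(t/\varepsilon)-\bar{\psi}\bigr|=0,
\end{equation*}
where $\bar{\psi}=-\mathcal A^{-1}\bar f$ is the unique stationary solution of the averaged equation (\ref{eqLNA2}). I expect no substantive obstacle here; the only delicate point is the time-rescaling inheritance for the various recurrence classes in (i), which is routine once one notes that the shift dynamical system on $C(\mathbb R_{+},\mathfrak B)$ is conjugated by the homeomorphism $\tau\mapsto\varepsilon\tau$ and all the properties in the list are shift-invariant.
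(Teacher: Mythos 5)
Your proposal is essentially sound, and item (ii) is handled exactly as in the paper: it is a verbatim restatement of item 4 of Theorem \ref{thLG1}, so citing it suffices. For item (i), however, you take a genuinely different route, and the difference matters for part of the conclusion. The paper's proof is a one-liner: it works on the \emph{original} time scale with the solution $\varphi_{\varepsilon}$ of equation (\ref{eqALN1}), whose forcing term is $\varepsilon f$; since $\mathfrak L_{\varepsilon f}=\mathfrak L_{f}$ (multiplying by a scalar changes nothing about recurrence in time, nor the period or the frequency spectrum), Theorem \ref{thLG1} (item 1, compatibility in the limit) combined with the comparability Theorem \ref{th2} transfers every asymptotic recurrence property of $f$ to $\varphi_{\varepsilon}$ with the \emph{same} period $\tau$ and the \emph{same} spectrum $\nu_1,\ldots,\nu_k$. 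You instead apply Corollary \ref{cor 10} to the rescaled equation (\ref{eqLNA1}) with forcing $f(\cdot/\varepsilon)$, which forces you to prove a time-rescaling inheritance lemma. That lemma is correct as far as it goes, but, as you yourself note, it turns a $\tau$-periodic limit function into an $\varepsilon\tau$-periodic one and rescales the quasi-periodic spectrum to $\nu_1/\varepsilon,\ldots,\nu_k/\varepsilon$; so what your argument delivers for the solution of (\ref{eqLNA1}) is asymptotic $\varepsilon\tau$-periodicity, etc., not the unchanged parameters asserted in the statement. The mismatch traces back to a notational slip in the corollary itself: it cites (\ref{eqLNA1}), while the unchanged parameters and the author's own proof (which invokes $\mathfrak L_{\varepsilon f}=\mathfrak L_{f}$) point to (\ref{eqALN1}), the two solutions being related by the change of time $\psi_{\varepsilon}(t)=\varphi_{\varepsilon}(t/\varepsilon)$. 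To prove the statement as intended you should argue on the unrescaled time scale, where no rescaling lemma is needed at all; for the rescaling-invariant classes in the list (asymptotically stationary, Bohr almost periodic, almost automorphic, Birkhoff recurrent, positively Lagrange stable) your version already gives the stated conclusion without modification.
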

\begin{proof}
This statement follows from Theorems \ref{th2} and \ref{thLG1},
because $\mathfrak L_{\varepsilon f}=\mathfrak L_{f}$ for any
$\varepsilon \in (0,\alpha]$.
\end{proof}

\subsection{Semi-Linear Equations}\label{S6.2}

Consider the following differential equation
\begin{equation}\label{eqG0}
x'(t)=\mathcal A x(t)+f(t)+F(t,x(t)),
\end{equation}
where $f\in C(\mathbb R_{+},\mathfrak B)$, $F\in C(\mathbb
R_{+}\times \mathfrak B,\mathfrak B)$ and $\mathcal A: D(\mathcal
A)\to \mathfrak B$ is a linear operator acting from $D(\mathcal
A)\subseteq \mathfrak B$ to $\mathfrak B$.

In this section we will consider a differential equation
(\ref{eqG0}) when the linear operator $\mathcal A$ is an
infinitesimal operator which generates a $C_0$-semigroup
$\{U(t)\}_{t\ge 0}$.

Below we will use the following conditions:
\begin{enumerate}
\item[\textbf{(G1)}:] $F(t,0)=0$ for any $t\ge 0$;
\item[\textbf{(G2)}:] there exists a positive constant $L$ such
that
\begin{equation}\label{eqGS2}
|F(t,x_{1})-F(t,x_{2})|\le L|x_{1}-x_{2}|\nonumber
\end{equation}
for any $x_1,x_2\in \mathfrak B$ and $t\in\mathbb R_{+}$;
\item[\textbf{(G3)}:] there exists functions $\bar{F}\in
C(\mathfrak B,\mathfrak B)$ and $\omega :\mathbb R_{+}\times
\mathbb R_{+}\to \mathbb R_{+}$ (respectively, an element
$\bar{f}\in \mathfrak B$ and function $\omega\in \Psi$) such that
\begin{equation}\label{eqGS3}
\frac{1}{T}\Big{|}\int_{t}^{t+T}[F(s,x)-\bar{F}(x)]dt\Big{|}\le
\omega(T,r)\nonumber
\end{equation}
(respectively,
\begin{equation}\label{eqGS3.1}
\frac{1}{T}\Big{|}\int_{t}^{t+T}[f(s)-\bar{f}]dt\Big{|}\le
\omega(T)\nonumber\ \ )
\end{equation}
and $\omega(\cdot,r)\in \Psi$ for any $t\in \mathbb R_{+}$, $T>0$,
$r>0$ and $x\in B[0,r]$.
\end{enumerate}

\begin{remark}\label{remC1} If the function $F$ satisfies
condition (\textbf{G2}) then its averaging $\bar{F}$ also
satisfies (\textbf{G2}) with the same constant $L$.
\end{remark}

The standard form of (\ref{eqG0}) is
\begin{equation}\label{eqG1}
x'(t)=\varepsilon (\mathcal{A}x(t)+f(t)+F(t,x(t))) .
\end{equation}
We will consider also the following equations
\begin{equation}\label{eqG2}
x'(t)=\mathcal A
x(t)+f(\frac{t}{\varepsilon})+F(\frac{t}{\varepsilon},x(t))),
\end{equation}
where $f_{\varepsilon}(t):=f(\frac{t}{\varepsilon})$
(respectively, $F_{\varepsilon}(t,x):=F(\frac{t}{\varepsilon},x)$)
for any $t\in\mathbb R_{+}$ (respectively, for any $(t,x)\in
\mathbb R_{+}\times \mathfrak B$), $\varepsilon
\in(0,\varepsilon_0]$ and $\varepsilon_{0}$ is some fixed small
positive number. Along with equations (\ref{eqG1})-(\ref{eqG2}) we
will consider the following averaged differential equation
\begin{equation}\label{eqG5}
x'(t)=\mathcal A x(t)+ \bar{f} + \bar{F}(x(t)).
\end{equation}

\begin{theorem}\label{thGS}
Suppose that the following conditions hold:
\begin{enumerate}
\item $-\mathcal A$ is a sectorial hyperbolic operator;  \item the
function $F$ satisfies conditions (\textbf{G1})-(\textbf{G3});
\item the functions $f$ and $F$ are Lagrange stable; \item
\begin{equation}\label{eqGS.1}
L<\frac{\nu}{2\mathcal N }\ ,\nonumber
\end{equation}
where $\mathcal N$ and $\nu$ there are the numbers figuring in the
Definition \ref{defH1}.
\end{enumerate}

Then there exists a positive number $\varepsilon_0$ such that for
any $0<\varepsilon \le \varepsilon_{0}$
\begin{enumerate}
\item equation (\ref{eqG2}) has a unique solution
$\psi_{\varepsilon}\in C_{b}(\mathbb R_{+},\mathfrak B)$ with
$\mathcal P\psi_{\varepsilon}(0)=0$; \item the solution
$\psi_{\varepsilon}$ of equation (\ref{eqG2}) is compatible in the
limit, i.e., $\mathfrak L_{F_{Q_{\varepsilon}}}\subseteq \mathfrak
L_{\psi_{\varepsilon}}$, where
$Q_{\varepsilon}:=\overline{\psi_{\varepsilon}(\mathbb R_{+})}$;
\item
\begin{equation}\label{eqG6}
\lim\limits_{\varepsilon \to 0}\sup\limits_{t\in\mathbb
R_{+}}|\psi_{\varepsilon}(t)-\bar{\psi}|=0,\nonumber
\end{equation}
where $\bar{\psi}$ is a unique stationary solution of equation
(\ref{eqG5}).
\end{enumerate}
\end{theorem}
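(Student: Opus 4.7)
\textbf{Proof plan for Theorem \ref{thGS}.} The strategy is to treat the three assertions in sequence, with the first two being essentially reductions to results already established, and the third being the substantive averaging estimate.

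For assertion (i), observe that the rescaled data $f_{\varepsilon}(t):=f(t/\varepsilon)$ and $F_{\varepsilon}(t,x):=F(t/\varepsilon,x)$ are again Lagrange stable on $\mathbb{R}_{+}$ (Lagrange stability is preserved by time rescaling since it amounts to precompactness of the range together with uniform continuity), and $F_{\varepsilon}$ satisfies the same Lipschitz estimate with constant $L<\nu/(2\mathcal{N})$. Therefore Theorem \ref{th4A} applied to equation \eqref{eqG2} yields a unique Lagrange stable solution $\psi_{\varepsilon}\in\mathcal{L}(\mathbb{R}_{+},\mathfrak{B})$ with $\mathcal{P}\psi_{\varepsilon}(0)=0$, together with the \emph{a priori} bound $\|\psi_{\varepsilon}-\varphi_{0,\varepsilon}\|\le r(\varepsilon)$ where $\varphi_{0,\varepsilon}$ is the corresponding linear solution. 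Assertion (ii) follows immediately by invoking Theorem \ref{thLS01} to the same equation, which gives $\mathfrak{L}_{(f_{\varepsilon},(F_{\varepsilon})_{Q_{\varepsilon}})}\subseteq\mathfrak{L}_{\psi_{\varepsilon}}$, hence \textit{a fortiori} $\mathfrak{L}_{F_{Q_{\varepsilon}}}\subseteq\mathfrak{L}_{\psi_{\varepsilon}}$.

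For the existence of the averaged stationary solution $\bar{\psi}$, the equation $\mathcal{A}y+\bar{f}+\bar{F}(y)=0$ is rewritten as a fixed point problem; existence and uniqueness of the bounded solution follow from Theorem \ref{th4A.1} applied to the autonomous equation $y'=\mathcal{A}y+\bar{f}+\bar{F}(y)$ (note that the averaged nonlinearity $\bar{F}$ inherits the Lipschitz constant $L$ from $F$ by Remark \ref{remC1}, so the smallness hypothesis still holds), and any autonomous bounded solution of an autonomous equation is a stationary point in our setting by uniqueness.

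The core of the argument is assertion (iii). Set $u_{\varepsilon}(t):=\psi_{\varepsilon}(t)-\bar{\psi}$ and decompose
\begin{equation*}
u_{\varepsilon}'=\mathcal{A}u_{\varepsilon}+g_{\varepsilon}(t)+h_{\varepsilon}(t)+\eta_{\varepsilon}(t),
\end{equation*}
where $g_{\varepsilon}(t):=f(t/\varepsilon)-\bar{f}$, $h_{\varepsilon}(t):=F(t/\varepsilon,\bar{\psi})-\bar{F}(\bar{\psi})$, and $\eta_{\varepsilon}(t):=F(t/\varepsilon,\bar{\psi}+u_{\varepsilon}(t))-F(t/\varepsilon,\bar{\psi})$. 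The crucial point is that the nonlinear dependence of $F$ on its spatial argument has been \emph{frozen} at $\bar{\psi}$ in the term $h_{\varepsilon}$, while the remainder $\eta_{\varepsilon}$ is controlled linearly by $u_{\varepsilon}$ via $|\eta_{\varepsilon}(t)|\le L|u_{\varepsilon}(t)|$. Applying the Green's operator representation from Theorem \ref{thLS11_2}, one gets
\begin{equation*}
\|u_{\varepsilon}\|\le\|\mathbb{G}(g_{\varepsilon})\|+\|\mathbb{G}(h_{\varepsilon})\|+\tfrac{2\mathcal{N}L}{\nu}\|u_{\varepsilon}\|,
\end{equation*}
so that $(1-2\mathcal{N}L/\nu)\|u_{\varepsilon}\|\le\|\mathbb{G}(g_{\varepsilon})\|+\|\mathbb{G}(h_{\varepsilon})\|$. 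Both $g_{\varepsilon}$ and $h_{\varepsilon}$ satisfy the uniform averaging hypothesis of Lemma \ref{lG1} (the first by condition \textbf{(G3)} on $f$, the second by condition \textbf{(G3)} on $F$ evaluated at the fixed point $x=\bar{\psi}\in B[0,\|\bar{\psi}\|]$), hence $\|\mathbb{G}(g_{\varepsilon})\|\to 0$ and $\|\mathbb{G}(h_{\varepsilon})\|\to 0$ as $\varepsilon\to 0^{+}$, which yields the claimed uniform convergence.

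The main obstacle is the third step, specifically the fact that one cannot apply Lemma \ref{lG1} directly to $F(t/\varepsilon,\psi_{\varepsilon}(t))-\bar{F}(\psi_{\varepsilon}(t))$, because the slow argument $\psi_{\varepsilon}(t)$ itself depends on $\varepsilon$ and the uniform averaging bound in \textbf{(G3)} requires a \emph{fixed} $x$. The decomposition above is designed exactly to bypass this issue: freezing the slow argument at $\bar{\psi}$ reduces the averaging question to the scalar-in-space case already handled in Lemma \ref{lG1}, while the remaining $x$-variation of $F$ is absorbed into the Lipschitz contraction constant.
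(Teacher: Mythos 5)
Your proposal is correct and follows essentially the same route as the paper: parts (i)--(ii) are obtained from Theorems \ref{th4A} and \ref{thLS01}, and for part (iii) you use exactly the paper's decomposition of $\psi_{\varepsilon}-\bar{\psi}$ into the three terms $f_{\varepsilon}-\bar{f}$, $F_{\varepsilon}(\cdot,\bar{\psi})-\bar{F}(\bar{\psi})$ (the frozen-argument term handled by Lemma \ref{lG1}) and the Lipschitz remainder absorbed by the contraction factor $2\mathcal{N}L/\nu<1$. No substantive differences.
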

\begin{proof}
According to conditions (\textbf{G1})-(\textbf{G2}) we have
\begin{equation}\label{eqL1}
|F(t,x)|\le  Lr\ \ \mbox{and}\ \ L\le \alpha <\frac{\nu}{2\mathcal
N}\nonumber
\end{equation}
for any $t\ge 0$, $|x|\le r$ and $r\ge 0$. According to Theorems
\ref{th4A} and \ref{thLS01} equation (\ref{eqG2}) has a unique
compatible in the limit solution $\psi_{\varepsilon}$ from
$C_{b}(\mathbb R_{+},\mathfrak B)$ with $\mathcal P
\psi_{\varepsilon}(0)=0$.

Let $\bar{\psi}$ be a unique stationary solution of equation
(\ref{eqG5}) which there exists under the conditions of Theorem
(see Theorem \ref{th4A} and Remark \ref{remC1}). Note that
$\psi_{\varepsilon}-\bar{\psi}\in C_{b}(\mathbb R_{+},\mathfrak
B)$ is a unique solution of equation
\begin{eqnarray}\label{eqL1.1}
& u'=\mathcal A u +f_{\varepsilon}(t)-\bar{f}
+F_{\varepsilon}(t,\psi_{\varepsilon}(t))-\bar{F}(\bar{\psi})=\nonumber
\\
& \mathcal A u
+[f_{\varepsilon}(t)-\bar{f}]+[F_{\varepsilon}(t,\psi_{\varepsilon}(t))-F_{\varepsilon}(t,\bar{\psi})]+[F_{\varepsilon}(t,\bar{\psi})-\bar{F}(\bar{\psi})]=\nonumber
\\
& \mathcal A u +
\hat{f}_{\varepsilon}(t)+\hat{F}_{\varepsilon}(t,\bar{\psi})+[F_{\varepsilon}(t,\psi_{\varepsilon}(t))-F_{\varepsilon}(t,\bar{\psi})]\nonumber
\end{eqnarray}
with $\mathcal P(\psi_{\varepsilon}(0)-\bar{\psi})=0$ and,
consequently,
\begin{equation}\label{eqL1.01}
\psi_{\varepsilon}(t)-\bar{\psi}=\int_{0}^{+\infty}G(t-\tau)\big{(}\hat{f}_{\varepsilon}(\tau)+
\hat{F}_{\varepsilon}(\tau,\bar{\psi})+[F_{\varepsilon}(\tau,\psi_{\varepsilon}(\tau))-F_{\varepsilon}(\tau,\bar{\psi})]\big{)}d\tau
.
\end{equation}
From (\ref{eqL1.01}) we get
\begin{equation}\label{eqL1.3}
|\psi_{\varepsilon}(t)-\bar{\psi}|\le
J_{1}(t,\varepsilon)+J_{2}(t,\varepsilon)+J_{3}(t,\varepsilon)
\end{equation}
for any $t\in\mathbb R_{+}$ and $\varepsilon \in (0,\alpha]$,
where
\begin{eqnarray}\label{eqL1.4}
&
J_{1}(t,\varepsilon):=|\int_{0}^{+\infty}G(t-\tau)\hat{f}_{\varepsilon}(\tau)d\tau|,
\nonumber \\
&
J_{2}(t,\varepsilon):=|\int_{0}^{+\infty}G(t-\tau)\hat{F}_{\varepsilon}(\tau,\bar{\psi})d\tau|\
\ \mbox{and}\nonumber
\\
&
J_{3}(t,\varepsilon):=|\int_{0}^{+\infty}G(t-\tau)[F_{\varepsilon}(t,\psi_{\varepsilon}(t))-F_{\varepsilon}(t,\bar{\psi})]\big{)}d\tau|\nonumber
.
\end{eqnarray}
By Lemma \ref{lG1} we have
\begin{equation}\label{eqL1.5}
\lim\limits_{\varepsilon\to )^{+}}\sup\limits_{t\in\mathbb
R_{+}}J_{i}(t,\varepsilon)=0\ \ (i=1,2).
\end{equation}
Taking in consideration (\ref{eqHG2}) we can estimate
$J_{3}(t,\varepsilon)$ as follows
\begin{eqnarray}\label{eqL1.6}
&
J_{3}(t,\varepsilon)=|\int_{0}^{+\infty}G(t-\tau)[F_{\varepsilon}(t,\psi_{\varepsilon}(t))-F_{\varepsilon}(t,\bar{\psi})]\big{)}d\tau|\le
\nonumber \\
&
L\|\psi_{\varepsilon}-\bar{\psi}\|\int_{0}^{+\infty}\|G(t-\tau)\|d\tau\le
\frac{2\mathcal N L}{\nu}\|\psi_{\varepsilon}-\bar{\psi}\| .
\end{eqnarray}
From (\ref{eqL1.01}), (\ref{eqL1.3}) and (\ref{eqL1.6}) we obtain
\begin{equation}\label{eqL1.7}
\|\psi_{\varepsilon}-\bar{\psi}\|\le \frac{\nu
B(\varepsilon)}{\nu-2\mathcal N L}
\end{equation}
for any $\varepsilon \in (0,\alpha]$, where
\begin{equation}\label{eqL1.8}
B(\varepsilon):=\sup\limits_{t\in\mathbb
R_{+}}J_{1}(t,\varepsilon) + \sup\limits_{t\in\mathbb
R_{+}}J_{2}(t,\varepsilon) .
\end{equation}
From (\ref{eqL1.5}), (\ref{eqL1.7}) and (\ref{eqL1.8}) we have
\begin{equation}\label{eqL1.9}
\lim\limits_{\varepsilon \to
0^{+}}\|\psi_{\varepsilon}-\bar{\psi}\|=0 .\nonumber
\end{equation}
Theorem is completely proved.
\end{proof}

\begin{remark}\label{remGR1} Note that Theorem \ref{thGS} remains
true for the equation
$$
x'=\varepsilon (\mathcal A x +f(t)+F(t,x,\varepsilon))
$$
if the function $F(t,x,\varepsilon)$ is Lipschitzian and its
Lipschitz constant $L(\varepsilon)$ satisfies to the following
condition
\begin{equation}\label{eqGR0}
\lim\limits_{\varepsilon \to 0^{+}}L(\varepsilon)=0 .
\end{equation}
\end{remark}

\begin{example}\rm
Consider the heat equation with small parameter $\varepsilon$  on
the interval [0,1] with Dirichlet boundary condition:
\begin{equation}\label{eqEx1}
\frac{\partial u}{\partial t} = \varepsilon
\Big{[}\frac{\partial^2 u}{\partial \xi^2}+ \frac{1}{3}(\sin
t+\cos\sqrt{2}t +e^{-t})\sin u\Big{]}
\end{equation}
on the interval [0,1] with Dirichlet boundary condition
$u(t,0)=u(t,1)=0,\;\;\; t>0$.

Let $A$ be the operator defined by $A\varphi (x)= \varphi^{''}(x)$
($0<x<1$), then $A: D(A)=H^2(0,1) \cap H^1_0 (0,1)\to L^2(0,1)$
(for more details see \cite[Ch.I]{Hen_1981}). Denote $H :=
L^2(0,1)$ and the norm on $H$ by $||\cdot||$. Then equation
(\ref{eqEx1}) can be written as an abstract evolution equation
\begin{equation}\label{aeex}
y'(t) =\mathcal{A}  y(t)+ {F}(t,y(t))\nonumber
\end{equation}
on the Hilbert space $H$, where
\begin{equation}
y(t) := u(t,\cdot), \quad {F}(t,y(t)):= f(t, u(t,\cdot)).\nonumber
\end{equation}
Note that, the operator $-A$ is sectorial \cite[Ch.I]{Hen_1981},
$\sigma(A)=\{-n^2\pi^2|\ n\in\mathbb N\}$ (i.e.,
$\sigma_{-}(A)=\sigma(A)$, $\sigma_{+}(A)=\emptyset$ and hence
$P_{-}=Id_{H}$ and $P_{+}=0$) and generates a $\mathcal
C^0$-semigroup $\{U(t)\}_{t\ge 0}=\{e^{At}\}_{t\ge 0}$ on $H$.
$-A$ is a sectorial \cite[Ch.I]{Hen_1981} hyperbolic operator,
$\sigma_{+}(-A)=\sigma(-A)=\{n^2\pi^2|\ n\in\mathbb N\}$ and
$\sigma_{-}(-A)=\emptyset$. Note that $Lip(F)\le 1$, so it is
immediate to verify that conditions
(\textbf{\textbf{G1}})-(\textbf{G2}) hold. It easy to check
condition (\textbf{G3}). Finally note that $F$ is asymptotically
quasi-periodic in $t$, uniformly w.r.t. $y$ on every compact
subset from $H$. By Theorem \ref{thGS} equation
\begin{equation}\label{eqEx2}
\frac{\partial u}{\partial t} = \frac{\partial^2 u}{\partial
\xi^2}+ \frac{1}{3}\Big{(}\sin (t/\varepsilon) +\cos(\sqrt{2}
t/\varepsilon) +e^{-t/\varepsilon}\Big{)}\sin u\nonumber
\end{equation}
has a unique asymptotically almost periodic solution
$\psi_{\varepsilon}$ with
$P_{-}\psi_{\varepsilon}(0)=\psi_{\varepsilon}(0)=0$ such that
$$\lim\limits_{\varepsilon \to 0}\sup\limits_{t\ge
0}|\psi_{\varepsilon}(t)|=0.$$
\end{example}

%\section{Acknowledgement}

\section{Funding}

This research was supported by the State Program of the Republic
of Moldova "Multivalued dynamical systems, singular perturbations,
integral operators and non-associative algebraic structures
(20.80009.5007.25)".

\section{Conflict of Interest}

The author declare that he does not have conflict of interest.

\end{document}